\definecolor{forestgreen}{rgb}{0.13, 0.85, 0.15}
\newcommand{\dx}{{\mathrm{d}x}}
\newcommand{\dy}{{\mathrm{d}y}}
\newcommand{\dxy}{{\mathrm{d}x\mathrm{d}y}}
\newcommand{\D}{{\mathcal{D}}}
\newcommand{\leqnomode}{\tagsleft@true\let\veqno\@@leqno}
\newtheorem{definition}{Definition}[section]
\newtheorem{theorem}[definition]{Theorem}
\newtheorem{example}[definition]{Example}
\newtheorem{remark}[definition]{Remark}
\newtheorem{proposition}[definition]{Proposition}
\newtheorem{lemma}[definition]{Lemma}
\numberwithin{equation}{section}
\DeclarePairedDelimiter\abs{\lvert}{\rvert}
\DeclarePairedDelimiter\norm{\lVert}{\rVert}
\let\oldnorm\norm
\def\norm{\@ifstar{\oldnorm}{\oldnorm*}}
\newcommand{\be} {\beta}
\newcommand{\Om} {\Omega}
\newcommand{\la} {\lambda}
\newcommand{\no} {\nonumber}
\newcommand{\noi} {\noindent}
\newcommand{\ra} {\rightarrow}
\DeclareMathAlphabet{\mathpzc}{T1}{pzc}{m}{it}
\def\w{{\widetilde w}}
\def\dx{{\,\rm d}x}
\def\Dsp{{{\mathcal D}^{s,p}(\R^N)}}
\def\sb2{{{\mathcal D}^{1,2}_0(B_1^c)}}
\def\w2r{{{ W}^{2,2}(\R^N)}}
\def\d2{{{\mathcal D}^{2,2}_0(\Om)}}
\def\C{{\mathcal C}}
\def\D{{\mathcal D}}
\def\R{{\mathbb R}}
\def\N{{\mathbb N}}
\def\F{{\mathcal F}}
\def\ep{{\epsilon}}
\def\({{\Big(}}
\def\){{\Big)}}
\def\ws2{{\F_{\frac{N}{2}}}}
\def\L2{{ L^{1,\;\infty}(\log L)^2}}
\def\dx{{\rm d}x}
\def\l2{\mathcal M\log L}
\def\c1Loc{{\C_{loc}^1}}
\def\Gagp{\iint_{\R^{2N}} \frac{|u(x)-u(y)|^p}{|x-y|^{N+sp}}\ \dxy}
\def\Gag2{\iint_{\R^{2N}} \frac{(u(x)-u(y))^2}{|x-y|^{N+sp}}\ \dxy}
\def\Gagnp{\iint_{\R^{2N}} \frac{|u_n(x)-u_n(y)|^p}{|x-y|^{N+sp}}\ \dxy}
\def\Gagn2{\iint_{\R^{2N}} \frac{(u_n(x)-u_n(y))^2}{|x-y|^{N+sp}}\ \dxy}
\title[Fractional $p$-Laplace semipositone problems over $\R^N$]{On semipositone problems over $\R^N$ for the fractional $p$-Laplace operator}
\author[N. Biswas and R. Kumar]{Nirjan Biswas$^1$\,\orcidlink{0000-0002-3528-8388} and Rohit Kumar$^2$\,\orcidlink{0009-0001-6494-6407}} 
\address{\rm  $^1$Department of Mathematics, Indian Institute of Science Education and Research Pune \\
Dr. Homi Bhabha Road, Pune 411008, India}
\address{\rm $^2$Department of Mathematics, Indian Institute of Technology Jodhpur \\
Rajasthan 342030, India}
\email{nirjaniitm@gmail.com, nirjan.biswas@acads.iiserpune.ac.in, rohit1.iitj@gmail.com}  
\thanks{$^2$Corresponding author}
\subjclass[2020]{35D30, 35A15, 35R11, 35B65, 35B09}
\keywords{semipositone problems; fractional $p$-Laplace operator; uniform regularity estimates; positive solutions.}
\begin{document}

% \begin{center}
%    Accepted in Journal of Mathematical Analysis and Applications 2025, \\ \href{https://doi.org/10.1016/j.jmaa.2025.129703}{j.jmaa.2025.129703}
% \end{center}

\begin{abstract}
\noindent For $N \ge 1, s\in (0,1)$, and $p \in (1, \frac{N}{s})$ we find a positive solution to the following class of semipositone problems associated with the fractional $p$-Laplace operator: 
\begin{equation}\tag{SP}
          (-\Delta)_{p}^{s}u =  g(x)f_a(u) \text{ in } \mathbb{R}^N, 
\end{equation} 
where $g \in L^1(\mathbb{R}^N) \cap L^{\infty}(\mathbb{R}^N)$ is a positive function, $a>0$ is a parameter and $f_a \in \C(\mathbb{R})$ is defined as $f_a(t) = f(t)-a$ for $t \ge 0$, $f_a(t) = -a(t+1)$ for $t \in [-1, 0]$, and $f_a(t) = 0$ for $t \le -1$, where $f$ is a non-negative continuous function on $[0,\infty)$ satisfies $f(0)=0$ with subcritical and Ambrosetti-Rabinowitz type growth. Depending on the range of $a$, we obtain the existence of a mountain pass solution to (SP) in $\mathcal{D}^{s,p}(\mathbb{R}^N)$. Then, we prove mountain pass solutions are uniformly bounded with respect to $a$, over $L^r(\mathbb{R}^N)$ for every $r \in \left[\frac{Np}{N-sp}, \infty\right]$. In addition, if $p>\frac{2N}{N+2s}$, we establish that (SP) admits a non-negative mountain pass solution for each $a$ near zero. Finally, under the assumption $g(x) \leq \frac{B}{|x|^{\beta(p-1)+sp}}$ for $B>0, x \neq 0$, and $ \be \in \left(\frac{N-sp}{p-1}, \frac{N}{p-1}\right)$, we derive an explicit positive radial subsolution to (SP) and show that the non-negative solution is positive a.e. in $\mathbb{R}^N$.
\end{abstract} 
\maketitle

\section{Introduction}
\noindent In this article, for $N \ge 1, s\in (0,1)$, and $p \in (1, \frac{N}{s})$ we study the following semipositone problems associated with the fractional $p$-Laplace operator: 
\begin{equation} \label{Main problem}
          (-\Delta)_{p}^{s}u=  g(x)f_a(u) \text{ in } \R^N \tag{SP}, 
\end{equation} 
where the function $g \in L^{1}(\R^N) \cap L^{\infty}(\R^N)$ is positive, $a>0$ is a parameter and the associated function $f_a: \R \rightarrow \R$ is defined as follows:
\begin{align} \label{1.1}
    f_a(t) = \begin{cases}
        f(t)-a & \text{ if } t \geq 0,\\
        -a(t+1)      & \text{ if }t\in [-1,0],\\
        0      & \text{ if }t \leq -1,
    \end{cases}
\end{align} 
where $f$ is a non-negative continuous function on $[0,\infty)$ with $f(0)=0$.
Also, $f$ satisfies the following growth assumptions:
\begin{enumerate}[label={($\bf f{\arabic*}$)}]
    \setcounter{enumi}{0}
    \item \label{f1}  $\displaystyle \lim\limits_{t \rightarrow 0^+} \frac{f(t)}{t^{p-1}}=0,$ and $\displaystyle \lim\limits_{t \rightarrow \infty} \frac{f(t)}{t^{\gamma-1}} \leq C(f)$ for some $\gamma \in \left(p,\frac{Np}{N-sp}\right)$ and  $C(f)>0$, \vspace{0.2 cm}
    \item \label{f2} (Ambrosetti-Rabinowitz) there exist $\vartheta >p$ and $t_0>0$ such that \[0< \vartheta F(t) \leq tf(t), \quad \forall \, t>t_0, \text{ where } F(t)= \int_{0}^{t} f(\tau) \,\mathrm{d}\tau.\]
   \end{enumerate}
We consider the solution space for \eqref{Main problem} as
\begin{align*}
    \Dsp := \overline{\C_{c}^{\infty}(\R^N)}^{\norm{\cdot}_{s,p}}, \text{ where }  \|u\|_{s,p} := \bigg(\Gagp\bigg)^{\frac{1}{p}}.
\end{align*}
The homogeneous fractional Sobolev space $\Dsp$ has the following  characterization (see \cite[Theorem 3.1]{Brasco2019characterisation}):
\begin{align}\label{dsp}
   \Dsp = \left\{ u \in L^{\frac{Np}{N-sp}}(\R^N) : \norm{u}_{s,p} < \infty \right\}. 
\end{align}
The fractional $p$-Laplace operator $(-\Delta)_{p}^{s}$ is defined as
$$ (-\Delta)_{p}^{s}u(x) = 2 \lim_{\epsilon \rightarrow 0^{+}} \int_{\mathbb{R}^{N} \backslash B_{\epsilon}(x)} \frac{|u(x) - u(y)|^{p-2}(u(x)-u(y))}{|x-y|^{N+sp}}\, \dy, \quad \text{for}~x \in \mathbb{R}^{N},$$ where $B_{\epsilon}(x)$ is the ball of radius $\epsilon$ and centred at $x$.
A function $u \in \Dsp$ is called a weak solution to \eqref{Main problem} if it satisfies the following identity:
\begin{equation*}
    \begin{split}
        \iint_{\R^{2N}} \frac{\abs{ u(x)-u(y) }^{p-2}( u(x)-u(y) )(\phi(x)-\phi(y))}{|x-y|^{N+sp}}\, \dxy \\
   = \int_{\R^N} g(x)f_a(u) \phi(x) \, \dx, \quad \forall \, \phi \in \Dsp.
    \end{split}
\end{equation*}
We call \eqref{Main problem} a semipositone problem since the term  $g(x)f_a(u)$ is strictly negative on some part of the regions $\{u \le 0\}$ and $\{u > 0\}$ near $u=0$. Semipositone problems arise in mathematical biology, population dynamics, control theory, etc. (see \cite{Oruganti_Shivaji_2002,Castro_Maya2000} and the references therein). Mathematicians have used several techniques to prove the existence of positive solution for semipositone problems, which include fixed point theory, sub and super-solution methods, and variational methods. These methods help to establish conditions under which solutions exist and (possibly under some additional hypotheses) provide insights into the qualitative behaviour of these solutions. Unlike positone problems, where the strong maximum principle guarantees the positivity of a non-negative solution, semipositone problems arise when the solution lives in regions where the source term is negative.

The study of semipositone problems began from the work \cite{BSPRS82} of Brown and Shivaji while studying the bifurcation theory for the perturbed problem $-\Delta u = \lambda(u-u^3) -\epsilon$ in $\Omega$ and $u >0$ in $\Omega$, where $\lambda, \epsilon > 0 $ and $\Omega$ is a bounded domain. Subsequently, numerous authors have investigated the existence and the qualitative aspects of positive solutions to local semipositone problems across various domains. For bounded domain $\Omega$, relevant studies can be found in \cite{CRQTPAMS17, Castro_radial_1989, Castro_Hassanpour_Shivaji, Castro_Shivaji_1988, Chhetri2015, Dancer2005, Alves2019Orlicz, Castro2016, Ruyun2024, Shiqiu2021, Costa2006, Perera2020, Lee2010}. In the context where $\Omega$ is the exterior of a bounded domain, we refer  \cite{Cheetri2018, Morris2018, Lee2011, Ko2015, Dhanya2016} and the references therein.
In \cite{Alves2019}, Alves et al. first studied the semipositone problem within the entire domain $\R^N$, as described by the equation $-\Delta u = g(x)(f(u)-a)$, $u>0$ in $\mathbb{R}^N ; N \geq 3$ with $f(0) = 0$ and $a>0$. The function $f \in \mathcal{C}(\R^+)$ is locally Lipschitz, has subcritical and the Ambrosetti-Rabinowitz (A-R) type growth. The weight $g$ is positive and bounded by a radial function $P \in \C(\R^+)$, where $P$ satisfies $(a) \int_{\R^N} |x|^{2-N}P(|x|)\,\dx < \infty$, $(b)$ $P(\abs{\cdot}) \in L^1(\R^N) \cap L^\infty(\R^N)$, and $(c)$ $|x|^{N-2}\int_{\R^N} P(|y|) |x-y|^{-N+2}\,\dy \leq C$, for all $x \in \R^N \setminus \{0\}$ and for some constant $C>0$.  For $p \in (1,N)$, in \cite{Santos2023}, Santos et al. investigated the nonlinear variant $-\Delta_p u = g(x)(f(u)-a)$, $u>0$ in $\mathbb{R}^N$, where $f(0) = 0, f \in \C(\R^+)$ exhibits subcritical with A-R type growth and the weight $g \in L^1(\R^N) \cap L^\infty(\R^N)$  satisfies $g(x)<B|x|^{-\vartheta}$ for $x\neq 0$, with $\vartheta >N$ and $B>0$.  Meanwhile, the study in \cite{Nirjan2023DCDS} focused on $\Delta^2 u = g(x)(f(u)-a)$, $u>0$ in $\mathbb{R}^N; N \geq 5$ where $f(0) = 0$, $f \in \C(\R^+)$ satisfies weaker A-R type growth.

Generally, the techniques used to prove the existence of positive solution for semipositone problems defined on $\R^N$ differ from those defined on smaller domains. 
The authors in \cite{Alves2019} studied an auxiliary problem $-\Delta u = g(x)f_a(u)$ in $\mathbb{R}^N$, where $f_a \in \C(\R)$ is defined as in \eqref{1.1}.
Subsequently, their main ideas to obtain positive solution are as follows: establish uniform boundedness of the weak solutions $\{ u_a \}$ in $L^\infty(\R^N)$ (using the regularity estimate by Brezis and Kato in \cite{Brezis_Kato}),  for $a$ near zero prove uniform convergence of $\{ u_a \}$ (using the Riesz potential for the Laplace operator) to a non-negative function $\tilde{u}$ which is a weak solution of some positone problem, find the positivity of $\tilde{u}$ (applying the strong maximum principle), and finally (again using the Riesz potential along with the assumption $(c)$) obtain the positivity of $u_a$ for $a$ near zero. For $p \neq 2$, the Riesz representation for the $p$-Laplace operator is unavailable. The authors in \cite{Santos2023} considered $-\Delta_p u = g(x) f_a(u)$, with a discontinuous function $f_a$ defined as $f_a(t)=f(t) -a$ for $t \ge 0$, $f_a(t) = 0$ for $t < 0$, and used a non-smooth variational approach. A key benefit of non-smooth analysis is that the critical point of the energy functional remains non-negative despite not being a weak solution to the problem. They established the existence of a positive critical point by constructing an explicit positive radial solution to a certain non-local equation and applying the comparison principle with that solution. This positive critical point ultimately serves as weak solution. The authors in \cite{Nirjan2023DCDS} applied a similar technique as in \cite{Alves2019} to obtain a positive solution.

Few articles are available in the literature dealing with non-local semipositone problems. In this direction, for $N>2s$ and $\Omega$ bounded, the authors in \cite{DTCPAA19} studied $(-\Delta)^s u = \lambda (u^q -1) + \mu u^r$ in $\Omega$; $u=0$ in $\R^N \setminus \Omega$, where $\lambda, \mu >0$, $q \in (0,1)$ and $r \in (1, \frac{N+2s}{N-2s})$. Under certain lower bound of $\la$, they constructed a positive subsolution when $\mu=0$ and showed that there exists at least one positive solution when $0<\mu<\mu_\lambda$.
For $N>p\ge 2$ and $\Omega$ bounded, the authors in \cite{LLVJMAA23} proved that $(-\Delta)_p^s u =\lambda f(u)$ in $\Omega$; $u=0$ in $\R^N \setminus \Omega$ admits positive solution, provided $\lambda >0$ is small. They used regularity of weak solution up to the boundary of $\Omega$ and Hopf's Lemma for $(-\Delta)_p^s$.  Recently, in \cite{Nirjan2023arxiv}, the author studies $(-\Delta)^s u = g(x)\left( f(u)-a \right)$ in $\mathbb{R}^N$ with $f(0)=0, f \in \C(\R^+)$ is locally Lipschitz, satisfies subcritical and weaker A-R type growth. Whereas, $g \in L^1(\R^N) \cap L^{\infty}(\R^N)$ satisfies $|x|^{N-2s}\int_{\R^N} g(y) |x-y|^{-N+2s} \,\dy \leq C(g), x\in \R^N \setminus \{0\}, C(g)>0$. The existence of a positive solution is obtained employing similar techniques as in \cite{Alves2019}.

In this paper, we aim to study \eqref{Main problem}, a non-local analogue of \cite{Santos2023}. To our knowledge, non-linear non-local semipositone problems on the whole of $\mathbb{R}^N$ have not been addressed in the literature. The weight function $g$ falls within both $L^{1}(\mathbb{R}^N)$ and $L^\infty(\mathbb{R}^N)$ spaces, adheres to the bounds specified in \eqref{g1 new1}. Meanwhile, the function $f$ meets 
subcritical and A-R type growth, as outlined in \ref{f1} and \ref{f2}. Depending on the parameter $a$, our principal goal is to establish the existence of a positive solution to \eqref{Main problem}. The nonsmooth variational technique as in \cite{Santos2023} is not readily adapted for $s \in (0,1)$ due to the regularity constraints associated with the critical points of the non-smooth energy functional of \eqref{Main problem}. This leads us to follow a different approach from \cite{Santos2023}. We consider an energy functional associated with \eqref{Main problem}, which has a $\C^1$ variational structure (see \eqref{2.1}). Applying the mountain pass theorem, we establish the existence of a mountain pass critical point for the energy functional, which corresponds to a mountain pass solution of \eqref{Main problem}. The following theorem combines our main results.
\begin{theorem}\label{Theorem 1.1}
Let $s\in (0,1), N \ge 1$, and $p \in (1, \frac{N}{s})$. Assume that $f$ satisfies \rm{\ref{f1}} and \rm{\ref{f2}}. Let $g$ be a positive function with $g \in L^1(\R^N) \cap L^{\infty}(\R^N)$. Then the following holds:
 \begin{enumerate}
     \item[(a)] There exists $a_1>0$ such that for each $a \in (0,a_1)$, \eqref{Main problem} admits a mountain pass solution $u_a$. Moreover, there exists a constant $C>0$ such that $\|u_a\|_{s,p} \leq C$ for all $a \in (0,a_1)$.
     \item[(b)] In addition, we assume that $f$ satisfies the following condition at infinity:
    \begin{enumerate}[label={($\bf \tilde{f{\arabic*}}$)}]
    \setcounter{enumi}{0}
    \item \begin{center}
    $\displaystyle \lim\limits_{t \rightarrow \infty} \frac{f(t)}{t^{\frac{Np}{N-sp}-1}} = 0$.
    \end{center}
   \end{enumerate}
    Then for every $r \in [\frac{Np}{N-sp}, \infty]$, there exists $C=C(r, N, s, p, f, g, a_1)$ such that
    \begin{align}
\norm{u_a}_{L^r(\R^N)} \le C, \quad \forall \, a \in (0, a_1).
    \end{align}
    \item[(c)] Further, let $p >\frac{2N}{N+2s}$. Then there exists $\tilde{a} \in (0,a_1)$ such that $u_a \ge 0$  a.e. in $\R^N$ for every $a \in (0, \tilde{a})$.
    \item[(d)] Furthermore, suppose $g$ satisfies the following bound:
\begin{equation}\label{g1 new1}
      g(x) \leq \frac{B}{|x|^{\beta(p-1)+sp}}, \text{ for some constant } B>0 \text{ and } x\neq 0,
  \end{equation}
  where $\frac{N-sp}{p-1}<\beta<\frac{N}{p-1}$. Then, there exists $\hat{a} \in (0, \tilde{a})$ such that $u_a >0$ a.e. in $\R^N$ for every $a \in (0, \hat{a})$.
 \end{enumerate}
 \end{theorem}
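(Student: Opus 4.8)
For part (a), I would run the mountain-pass scheme on the $C^1$ functional $I_a(u)=\tfrac1p\|u\|_{s,p}^p-\int_{\R^N}g(x)F_a(u)\,\dx$, $F_a(t)=\int_0^t f_a(\tau)\,\dt$, whose $C^1$ regularity, Palais--Smale condition and geometric conditions are supplied by Proposition~\ref{MP1} and Proposition~\ref{Palais smale I_a}, which I take as given. The parameter enters only near the origin: for $t\ge0$ one has $F_a(t)=F(t)-at$, and for $t\le0$ one has $0\le F_a(t)\le a/2$. Using \ref{f1}, \eqref{dsp} and the fact that $g\in L^1\cap L^\infty$ lies in every dual Lebesgue space, one gets $I_a(u)\ge c_1\|u\|_{s,p}^p-c_2\|u\|_{s,p}^\gamma-\tfrac{a}{2}\|g\|_{L^1}$; fixing $\rho$ small first and then $a_1:=c_1\rho^p/(2\|g\|_{L^1})$ gives $\inf_{\|u\|_{s,p}=\rho}I_a\ge\alpha>0$ for all $a<a_1$, while \ref{f2} provides, for a fixed $0\le\phi\in\cc(\R^N)$, an element $e=T\phi$ with $\|e\|_{s,p}>\rho$ and $I_a(e)<0$ for all $a<a_1$. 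This yields a critical point $u_a$ at level $c_a\in[\alpha,M]$ with $M:=\sup_{a<a_1}\max_{t\in[0,1]}I_a(te)<\infty$ (since $e$ is fixed); plugging \ref{f2} into $c_a=I_a(u_a)-\tfrac1\vartheta I_a'(u_a)u_a$ gives $(\tfrac1p-\tfrac1\vartheta)\|u_a\|_{s,p}^p\le M+$ (lower-order terms), hence the uniform bound $\|u_a\|_{s,p}\le C$.

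For part (b), from (a) and \eqref{dsp} we have $\|u_a\|_{L^{\ps}}\le M_0$ uniformly. The decay hypothesis of (b) gives, for each $\epsilon>0$, a constant $C_\epsilon$ with $|f_a(t)|\le\epsilon|t|^{\ps-1}+C_\epsilon+a_1$ for all $t$, so $|g f_a(u_a)|\le\epsilon\|g\|_{L^\infty}|u_a|^{\ps-1}+h_\epsilon$ with $h_\epsilon:=(C_\epsilon+a_1)g\in L^1\cap L^\infty$. This is the Brezis--Kato configuration: a Moser iteration adapted to $(-\Delta)_p^s$ (test with $u_a|T_ku_a|^{p(\beta-1)}$, use the fractional Sobolev inequality and the algebraic inequality for the kernel) closes because the critical term carries the small factor $\epsilon\|g\|_{L^\infty}$, absorbed on the first step; iterating with exponent ratio $\ps/p>1$ gives $\|u_a\|_{L^r}\le C_r$ for every finite $r$ and then $\|u_a\|_{L^\infty}\le C$, all constants depending only on $M_0$, $\|g\|_{L^1\cap L^\infty}$, $f$ and $a_1$, hence uniform in $a<a_1$.

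For part (c), testing the weak formulation with $(u_a+1)^-\in\Dsp$ and using $f_a\equiv0$ on $(-\infty,-1]$ together with the monotonicity inequality $\iint\frac{|v(x)-v(y)|^{p-2}(v(x)-v(y))(v^-(x)-v^-(y))}{|x-y|^{N+sp}}\,\dxy\le-\|v^-\|_{s,p}^p$ (applied to $v=u_a+1$) gives $u_a\ge-1$ a.e.\ for every $a$; testing then with $u_a^-$ and using $0\le F_a\le a/2$ on $\{u_a<0\}$ gives $\|u_a^-\|_{s,p}^p\le\tfrac{a}{4}\|g\|_{L^1}$. Take $a_n\downarrow0$; by (a), up to a subsequence $u_{a_n}\rightharpoonup\tilde u$ in $\Dsp$, and the $(S_+)$ property of $(-\Delta)_p^s$ together with the subcritical growth, \ref{f2} and $g\in L^1$ upgrades this to strong convergence in $\Dsp$ — it is in this limiting analysis, through an $L^2$-type estimate, that $p>\tfrac{2N}{N+2s}$, i.e.\ $\ps>2$, is used. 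Then $\tilde u\ge0$ solves the positone problem $(-\Delta)_p^s\tilde u=g f(\tilde u)$, and $\tilde u\not\equiv0$ (otherwise $c_{a_n}=I_{a_n}(u_{a_n})\to0<\alpha$); since $\tilde u$ is bounded by (b) we have $g f(\tilde u)\ge-Cg\,\tilde u^{p-1}$, so the strong maximum principle for $(-\Delta)_p^s$ and interior Hölder regularity give $\tilde u>0$ everywhere, and the uniform bound of (b), the resulting uniform Hölder estimate and the interpolation $\|w\|_{L^\infty}\lesssim[w]_{C^\alpha}^{\theta}\|w\|_{L^{\ps}}^{1-\theta}$ yield $u_{a_n}\to\tilde u$ uniformly on $\R^N$. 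A finer argument then upgrades the smallness of $u_{a_n}^-$ to $u_{a_n}^-\equiv0$ for $n$ large, giving $u_a\ge0$ a.e.\ for $a\in(0,\tilde a)$. I expect this last step — passing from ``$u_a^-$ small'' to ``$u_a^-=0$'' — to be the main obstacle of the whole theorem.

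For part (d), for $\beta\in(\tfrac{N-sp}{p-1},\tfrac{N}{p-1})$ the profile $|x|^{-\beta}$ satisfies $(-\Delta)_p^s(|x|^{-\beta})=\lambda(\beta)|x|^{-\beta(p-1)-sp}$ with $\lambda(\beta)<0$ (the sign flips at $\beta=(N-sp)/(p-1)$, and $\beta<N/(p-1)$ keeps the defining integral convergent at the origin). The plan is to take $\psi$ a bounded truncation of $|x|^{-\beta}$ with $\psi(x)=|x|^{-\beta}$ for $|x|\ge1$ (so $\psi\in L^{\ps}$) and set $\underline u_a:=\mu_a\psi$ with $\mu_a\to0$ of order $a^{1/(p-1)}$; using $\lambda(\beta)<0$, the bound \eqref{g1 new1}, and \ref{f1} (which makes $f(t)/t^{p-1}$ small for small $t$, so $f_a(\underline u_a)\ge-\epsilon(\mu_a)(\underline u_a)^{p-1}-a$), one checks $(-\Delta)_p^s\underline u_a\le g(x)f_a(\underline u_a)$ for $a$ small. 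By (c) in its uniform-convergence form, $u_a\ge\delta$ on a fixed ball $\overline{B_{R_0}}$ for $a$ small, so $\underline u_a\le\mu_a\le\delta\le u_a$ on $\overline{B_{R_0}}$; the comparison principle for $(-\Delta)_p^s$ on the exterior domain $B_{R_0}^c$ then gives $u_a\ge\underline u_a$ there, and since $\underline u_a=\mu_a|x|^{-\beta}>0$ on $B_{R_0}^c$ we conclude $u_a>0$ a.e.\ on $\R^N$ for $a\in(0,\hat a)$. The technical care here goes into the truncation of $\psi$ near the origin so that $\underline u_a$ is a genuine subsolution, and into the bookkeeping of the constants $\mu_a$, $\delta$, $R_0$.
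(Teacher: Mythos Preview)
Your proposals for (a), (b) and (d) match the paper's proofs closely: mountain-pass with a fixed endpoint $e$ independent of $a$ to bound $c_a$ uniformly (Theorem~\ref{Existence and uniform boundedness}), Moser iteration with truncated powers $((u_a^\pm)_M)^\sigma$ as test functions for the $L^r$ bounds (Proposition~\ref{regularity}), and the truncated radial subsolution $\widetilde\Gamma=\min\{\varepsilon,\varepsilon^{\beta+1}|x|^{-\beta}\}$ together with the comparison principle on an exterior ball (Theorem~\ref{theorem for positivity}). Those parts are fine.

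Part (c) is where your sketch diverges from the paper, in two places.

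First, for the uniform convergence $u_{a_n}\to\tilde u$ in $L^\infty(\R^N)$ the paper does \emph{not} go through H\"older regularity and the interpolation $\|w\|_\infty\lesssim[w]_{C^\alpha}^\theta\|w\|_{\ps}^{1-\theta}$. Instead (Proposition~\ref{unifrom convergence}) it subtracts the two weak formulations, sets $w_n=u_{a_n}-\tilde u$, tests with $|w_n|^{q-1}w_n$, and invokes the algebraic inequalities of \cite{Antonio2020} for $(\Phi(A)-\Phi(B))\cdot|A-B|^{q-1}(A-B)$. For $p\ge2$ this controls $\bigl\|\,|w_n|^{(q+p-1)/p}\bigr\|_{s,p}^p$ and the iteration has step ratio $\ps/p$; for $1<p<2$ one only controls $\bigl\|\,|w_n|^{(q+1)/2}\bigr\|_{s,p}^2$ (modulo a factor $(\|u_n\|_{s,p}^p+\|\tilde u\|_{s,p}^p)^{2-p}$, harmless by the uniform bound of (a)), so the step ratio becomes $\ps/2$ --- and \emph{this} is precisely where $p>\tfrac{2N}{N+2s}\Leftrightarrow\ps>2$ enters. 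Your route via a global $C^\alpha$ bound is shakier: the regularity theorems available for $(-\Delta)_p^s$ are local, promoting them to a uniform bound on all of $\R^N$ is not automatic, and your interpolation requires decay of $w_n$ at infinity. More tellingly, your proposed route does not use $\ps>2$ at all, so the parenthetical ``through an $L^2$-type estimate'' is a placeholder for an argument you have not actually supplied.

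Second, the step you flag as the main obstacle --- passing from $\|u_{a_n}-\tilde u\|_\infty\to0$ and $\tilde u>0$ a.e.\ to $u_{a_n}\ge0$ a.e.\ for large $n$ --- is dispatched by the paper in a single sentence (Proposition~\ref{positivity of tilde u}(ii)). Your unease is well placed: since $\tilde u\in L^{\ps}(\R^N)$ is continuous and positive but $\inf_{\R^N}\tilde u=0$, uniform convergence alone does not force $u_{a_n}\ge0$ near infinity, and the paper offers no additional mechanism at that point. In the paper's architecture the tail is only genuinely controlled in part~(d), where the explicit subsolution bounds $u_a$ from below by a positive function on an exterior domain; without the decay hypothesis \eqref{g1 new1} on $g$, neither your sketch nor the paper's one-liner visibly closes this step for (c) on its own. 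So your instinct that this is the delicate point is correct.
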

To show the positivity of the solution $u_a$, we first show that the sequence of solutions $\{ u_a \}$ uniformly converges to a positive function in $\C(\R^N)$ as $a \ra 0$ (see Proposition \ref{properties of limit function}, Proposition \ref{unifrom convergence}, and Proposition \ref{positivity of tilde u}). For $a$ near zero, we obtain an explicit positive subsolution of \eqref{Main problem} on the exterior of a ball in $\R^N$, following the approach in \cite[Lemma 3.4]{Brasco2016_Optimaldeacy}. Subsequently, we obtain the positivity of $u_a$ using the comparison principle \cite[Theorem 2.7]{Brasco2016_Optimaldeacy}. The strategy to prove Theorem \ref{Theorem 1.1}-((c) and (d)) is different from \cite{Nirjan2023arxiv, Alves2019}, where the Riesz potential for a linear operator plays a major role.

The rest of the paper is organized as follows. In Section \ref{functional framework}, we set up a functional framework for \eqref{Main problem}. Section \ref{Existence section} covers the existence and various qualitative properties of solutions to \eqref{Main problem}. This section contains the proof of Theorem \ref{Theorem 1.1}. In the Appendix, we provide some technical lemmas. 

%%%%%%%%%%%%%%%%%%%%%%%%%%%%%%%%%%%%%%%%%%%%%
\section{Functional frameworks for the problem}\label{functional framework}

To obtain the existence of non-trivial solutions to \eqref{Main problem}, this section studies variational settings. We fix some notations that will be used throughout this paper for brevity. 

\noi \textbf{Notation:} 
(i) We denote $X$ as a real Banach space endowed with the norm $\|\cdot\|_X$.
\\
\noi (ii) $X^*$ denotes the dual of $X$.\\
\noi (iii) We denote $\|\cdot\|_{*}$ as the norm on $(\Dsp)^*$. \\
% (iv) Generalized directional derivative of a Lipschitz function $J$ at a point $u$ is denoted as $J^{0}(u;\cdot)$.
\noi (iv) For $p \in [1, \infty]$, the $L^p(\R^N)$ norm of a function $u$ is denoted as $\norm{u}_p$. \\
\noi (v) For $s \in (0,1)$ and $p \in (1, \frac{N}{s})$, $p_s^{*}=\frac{Np}{N-sp}$ is the non-local critical exponent. \\
\noi (vi) We denote $\Phi : \R \rightarrow \R$ which is defined as $\Phi(t)=|t|^{p-2}t$. \\
\noi (vii) We denote $C, \widetilde{C}, C_1, C_2, C_3$ as positive constants. \\
\noi (viii) $B_r$ denotes an open ball of radius $r$ with centre at origin. \\
\noi (ix) For $A \subset \R^N$, $A^c$ denotes the complement of $A$, i.e., $A^c=\R^N \setminus A$. \\
\noi (x) For $s \in (0,1)$ and $p \in (1,\infty)$, 
$$L_{sp}^{p-1}(\R^N):= \left\{ u \in L_{\text{loc}}^{p-1}(\R^N) : \int_{\R^N} \frac{\abs{u(x)}^{p-1}}{(1+\abs{x})^{N+sp}}\,\dx < \infty \right\}.$$ 
\\
\noi (xi) For $s \in (0,1)$ and $p \in (1,\infty)$, 
$$ W^{s,p}(\Omega):= \left\{ u \in L^p(\Omega) : \abs{u}^{p}_{W^{s,p}(\Omega)}:=\iint_{\Omega \times \Omega} \frac{\abs{u(x)-u(y)}^p}{|x-y|^{N+sp}}\,\dxy <\infty  \right\},$$
and 
$$W^{s,p}_{\text{loc}}(\Omega) := \left\{ u \in L^p_{\text{loc}}(\Omega): u \in W^{s,p}(\Omega_1), \text{  for any relatively compact open set }  \Omega_1 \subset \Omega \right\}.$$

For $g \in L^1(\R^N) \cap L^\infty(\R^N)$ and $a \geq 0$, we define the maps $K_a$ and $I_a$ on $\Dsp$ as follows 
 \[K_a(u) := \int_{\R^N} g(x)F_a(u(x))\,\dx, \text{ and } I_a(u) := \frac{1}{p} \|u\|_{s,p}^p -K_a(u).\]
 Notice that $K_a, I_a \in \C^1(\Dsp,\R)$ and the corresponding Fréchet derivatives are given by
 \begin{equation}\label{2.1}
     \begin{split}
    & K_a'(u)(v) = \int_{\R^N} g(x)f_a(u) v\,\dx, \text{ and } \\
    & I_a'(u)(v) = \iint_{\R^{2N}} \frac{\Phi(u(x)-u(y) )(v(x)-v(y))}{|x-y|^{N+sp}}\, \dxy -K_a'(u)(v), \; \forall \, v \in \Dsp. 
     \end{split}
 \end{equation}
Moreover, every critical point of $I_a$ corresponds to a solution of \eqref{Main problem}. Before discussing further properties of $K_a, I_a$, we identify the upper and lower bounds of $f, f_a,$ and their primitives. Clearly, the function $f_a \in \C(\R)$ and its primitive $F_a$ is defined as $F_a(t) = F(t) - at$ for $t \ge 0$, $F_a(t) = -\frac{at^2}{2}-at$ for $t \in [-1,0]$ and $F_a(t) =\frac{a}{2}$ for $t \leq -1$. From \ref{f1},
\begin{align*}
  &\lim\limits_{t \rightarrow 0^+} \frac{f(t)}{t^{p-1}}= 0 \Rightarrow \text{ for every } \epsilon >0, \text{ there exists }  t_1(\epsilon)>0 \text{ such that }  f(t)< \epsilon t^{p-1} \text{ for } t \in (0, t_1(\epsilon)). \\
  & \lim\limits_{t \rightarrow \infty} \frac{f(t)}{t^{\gamma-1}} \leq C(f) \Rightarrow f(t) \leq C(f,t_1(\epsilon)) t^{\gamma-1} \text{ for } t \geq t_1(\epsilon).
\end{align*}
Hence, we have the following bounds for $\gamma \in (p, p^*_s]$: 
\begin{align}\label{2.2}
     |f_a(t)| \leq \epsilon |t|^{p-1} + C(f,t_1(\epsilon)) |t|^{\gamma-1} +a \text{ and } |F_a(t)| \leq \epsilon |t|^{p} + C(f,t_1(\epsilon)) |t|^{\gamma}+a|t| \text{ for } t \in \R. 
\end{align} 
Again using the subcritical growth on $f$, $f(t) \leq C(f)t^{\gamma-1}$, for $t>t_2$, for some $t_2>0$. The continuity of $f$ infers that $f(t) \leq C$ on $[0,t_2]$. Hence for $a \in (0,\Tilde{a})$, we obtain
\begin{align}\label{2.3}
    |f_a(t)| \leq C(1+ |t|^{\gamma-1}) \text{ and } |F_a(t)| \leq C(|t|+|t|^\gamma) \text{ for } t \in \R, \text{ where } C=C(f,t_2,\Tilde{a}). 
\end{align}
By \ref{f2} and the continuity of $F(t)$, there exist $M_1,M_2>0$ such that 
\begin{align} \label{2.4}
    F(t) \geq M_1 t^\vartheta - M_2, \; \forall \, t \geq 0.  
\end{align}

\begin{remark}[A-R condition of $f_a$]
    For $t>t_0$, it follows from \rm{\ref{f2}} that \[\vartheta F_a(t) = \vartheta F(t) - \vartheta at \leq tf(t)-at = tf_a(t).\] 
For $t \in [0,t_0]$, by continuity of $F$, there exists $M >0$ independent of $a$, such that \[\vartheta F_a(t)= \vartheta F(t)-\vartheta at \leq M - at \leq M-at + tf(t)= tf_a(t)+M.\] 
For $t \in [-1,0]$ and $a \in (0, \tilde{a})$, observe that $\vartheta F_a(t) \leq -\vartheta at -\frac{a t^2}{2}$ and $tf_a(t) = -at^2 -at$. Then we have the following estimate:
\begin{align*}
    \vartheta F_a(t) - tf_a(t) \leq -(\vartheta -1) a t + \frac{at^2}{2} \leq -(\vartheta -1) \tilde{a} t + \frac{\tilde{a}}{2} \leq \vartheta \tilde{a}.
\end{align*}
For $t \leq -1$, we have
\[\vartheta F_a(t) = \frac{\vartheta a}{2} \leq \frac{\vartheta\tilde{a}}{2}  \leq tf_a(t) + \vartheta \tilde{a}. \]
By choosing $M_3 = \max\{M,\vartheta \tilde{a}\}$, we obtain the following Ambrosetti-Rabinowitz (A-R) condition for $f_a$:
\begin{align} \label{2.5}
    \vartheta F_a(t) \leq t f_a(t)+M_3,\;  \forall \, t \in \R \text{ and } a \in (0,\tilde{a}),
\end{align} 
where $M_3$ is independent of $a$ and $t$.
\end{remark}

The following proposition states some compact embeddings of the solution space for \eqref{Main problem} into the spaces of locally integrable functions and weighted Lebesgue spaces.

\begin{proposition}\label{compactness weight g}
   Let $p \in (1, \frac{N}{s})$ and $q \in [1,p_s^{*})$. Then the following hold:
   \begin{enumerate}
       \item[\rm{(i)}] The embedding $\Dsp \hookrightarrow L_{\text{loc}}^q(\R^N)$ is compact.
       \item[\rm{(ii)}] Suppose $g \in L^{\alpha}(\R^N)$ for $\alpha= \frac{p_s^{*}}{p_s^{*}-q}$. Then the embedding $\Dsp \hookrightarrow L^{q}(\R^N, |g|)$ is compact.
   \end{enumerate}
\end{proposition}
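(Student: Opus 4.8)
The plan is to prove both compact embeddings by combining local fractional Sobolev compactness with a tightness/decay argument that controls the mass of $u$ near infinity, using the fact that functions in $\Dsp$ lie in $L^{p_s^*}(\R^N)$.

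For part (i), I would fix a bounded open set $\Omega \subset \R^N$ and let $\{u_n\}$ be a bounded sequence in $\Dsp$. Choosing a ball $B_R \supset \overline{\Omega}$ (or a slightly larger ball $B_{2R}$ to have room for the seminorm), I would show $\{u_n\}$ is bounded in $W^{s,p}(B_{2R})$: the $L^p(B_{2R})$ bound follows from the continuous embedding $L^{p_s^*}(\R^N) \hookrightarrow L^p(B_{2R})$ on the bounded set $B_{2R}$ together with \eqref{dsp}, and the Gagliardo seminorm over $B_{2R} \times B_{2R}$ is dominated by $\|u_n\|_{s,p}^p$. By the classical compact embedding $W^{s,p}(B_{2R}) \hookrightarrow\hookrightarrow L^q(B_{2R})$ for $q \in [1, p_s^*)$ (valid since $B_{2R}$ is a bounded Lipschitz domain), a subsequence converges strongly in $L^q(B_{2R})$, hence in $L^q(\Omega)$. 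This gives compactness of $\Dsp \hookrightarrow L^q_{\mathrm{loc}}(\R^N)$.

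For part (ii), let $\{u_n\}$ be bounded in $\Dsp$, say $\|u_n\|_{s,p} \le M$; by \eqref{dsp} and the Sobolev inequality $\|u_n\|_{p_s^*} \le S^{-1} M$ as well. Passing to a subsequence, $u_n \rightharpoonup u$ in $\Dsp$ and, by part (i), $u_n \to u$ in $L^q_{\mathrm{loc}}(\R^N)$ and a.e. The goal is $\int_{\R^N} |g|\,|u_n - u|^q \to 0$. Split the integral over $B_R$ and $B_R^c$. On $B_R$: since $g \in L^\alpha(\R^N) \subset L^\alpha(B_R)$ with $\alpha = \tfrac{p_s^*}{p_s^* - q}$, Hölder with exponents $\alpha$ and $\tfrac{p_s^*}{q}$ gives $\int_{B_R} |g|\,|u_n-u|^q \le \|g\|_{L^\alpha(B_R)}\,\|u_n - u\|_{L^{p_s^*}(B_R)}^q$, and the latter factor $\to 0$ by part (i) applied with exponent $p_s^*$... \emph{except} that part (i) only gives $q < p_s^*$; instead I would use that $u_n \to u$ in $L^q(B_R)$ for each fixed $q < p_s^*$ directly, or interpolate: it is cleaner to keep $\int_{B_R} |g|\,|u_n - u|^q \le \|g\|_{L^\alpha(B_R)} \|u_n - u\|_{L^{p_s^*}(B_R)}^{q}$ and note $u_n \to u$ in $L^{r}(B_R)$ for every $r < p_s^*$, combined with the uniform $L^{p_s^*}$ bound, yields $\|u_n - u\|_{L^{r}(B_R)} \to 0$ and hence, by interpolation between $L^{r}$ and the bounded $L^{p_s^*}$ norm, strong convergence in any $L^{r'}$ with $r' < p_s^*$; if $q < p_s^*$ this already suffices after applying Hölder with a slightly larger exponent than $\alpha$. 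The tail over $B_R^c$: by Hölder as above, $\int_{B_R^c} |g|\,|u_n - u|^q \le \|g\|_{L^\alpha(B_R^c)}\, \|u_n - u\|_{p_s^*}^q \le \|g\|_{L^\alpha(B_R^c)}\,(2S^{-1}M)^q$, and since $g \in L^\alpha(\R^N)$ (note $\alpha \in (1,\infty)$ because $q < p_s^*$; and $g \in L^1 \cap L^\infty \subset L^\alpha$), we have $\|g\|_{L^\alpha(B_R^c)} \to 0$ as $R \to \infty$. Given $\varepsilon > 0$, first choose $R$ so the tail is $< \varepsilon$, then choose $n$ large so the $B_R$ part is $< \varepsilon$; this proves $u_n \to u$ in $L^q(\R^N, |g|)$.

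The main obstacle is the bookkeeping in part (ii): making the Hölder split clean while respecting that the local compactness of part (i) is only available for exponents strictly below $p_s^*$, so one cannot directly invoke $L^{p_s^*}$-strong convergence on $B_R$. The fix is to use a Hölder exponent pair with the integrability exponent on $|u_n - u|$ taken strictly less than $p_s^*$ — possible precisely because $q < p_s^*$ forces $\alpha = \tfrac{p_s^*}{p_s^*-q} < \infty$, leaving a small exponent gap — and to exploit that $g \in L^1 \cap L^\infty$ guarantees $g \in L^{\alpha'}$ for all $\alpha' \ge 1$, in particular for the slightly larger exponent needed. One should also record why $\alpha \ge 1$: this is immediate since $p_s^* - q \le p_s^*$. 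Everything else is routine once the local compactness (part (i)) and the uniform Sobolev bound are in place.
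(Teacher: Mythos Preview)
Your part (i) is fine and matches the paper's route, which simply cites the local fractional Sobolev compactness.

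For part (ii) you take a genuinely different decomposition than the paper: you split the \emph{domain} into $B_R$ and $B_R^c$, whereas the paper splits the \emph{weight} by approximating $g$ in $L^\alpha(\R^N)$ by some $g_\varepsilon\in\C_c(\R^N)$. Your tail estimate on $B_R^c$ is clean and correct. The trouble is on $B_R$: after H\"older with exponent $\alpha$ you land on $\|u_n-u\|_{L^{p_s^{*}}(B_R)}$, which part (i) does not control, and your proposed fix --- H\"older with a slightly larger exponent $\alpha'>\alpha$ on $g$ --- requires $g\in L^{\alpha'}(\R^N)$. That is \emph{not} part of the proposition's hypothesis (only $g\in L^\alpha$ is assumed). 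Invoking $g\in L^1\cap L^\infty$ from the paper's ambient setting rescues the argument for every application made later in the paper, but it does not prove the proposition as stated.

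The paper's approach avoids this neatly: writing $|g|\le |g-g_\varepsilon|+|g_\varepsilon|$, the first piece is handled by H\"older against the uniformly bounded $\|u_n-u\|_{p_s^{*}}^q$ and made small by density in $L^\alpha$, while the second piece is bounded with compact support $K$, so $\int_{\R^N}|g_\varepsilon|\,|u_n-u|^q\le \|g_\varepsilon\|_\infty\int_K|u_n-u|^q\to 0$ directly by part (i) with the exponent $q$ itself --- no exponent juggling needed. If you want to keep your domain split and stay within the stated hypothesis, you can run this same truncation on $B_R$ (or use Vitali's theorem there, since integrability of $|g|^\alpha$ gives equi-integrability of $\{|g|\,|u_n-u|^q\}$), but at that point you have essentially reproduced the paper's idea.
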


\begin{proof}
   \noi (i) Proof follows using \cite[Lemma A.1]{Brasco2014_fractional_Cheeger} and the same arguments as given in \cite[Proposition 2.1]{Nirjan2023arxiv}.

   \noi (ii) Assume that $u_n \rightharpoonup u$ in $\Dsp$. We need to prove $u_n \rightarrow u$ in $L^q(\R^N,|g|)$. The space $\C_c(\R^N)$ is dense in $L^{\alpha}(\R^N)$ and hence for every $\epsilon>0$ we take $g_\epsilon \in \C_c(\R^N)$ with $K:= \mathrm{supp}(g_\epsilon)$ such that \[\|g-g_\epsilon\|_{\alpha}<\frac{\epsilon}{2L},\] where $L:= \sup\{\|u_n-u\|_{p_s^{*}}^q : n \in \N\}$. Now, using the triangle and H\"{o}lder inequalities, we obtain the following estimates:
\begin{align}\label{2.6}
     \int_{\R^N} |g||u_n-u|^q\,\dx &\leq \int_{\R^N} |g-g_\epsilon||u_n-u|^q\,\dx + \int_{\R^N} |g_\epsilon||u_n-u|^q\,\dx \notag\\
    & \leq \|g-g_\epsilon\|_{\alpha} \|u_n-u\|_{p_s^{*}}^q + \int_{K}|g_\epsilon||u_n-u|^q\,\dx \leq \frac{\epsilon}{2} + M \int_{K} |u_n-u|^q\,\dx.
\end{align}
The above constant $M$ is the upper bound of $g_\epsilon$ on the compact set $K$. Moreover, the embedding $\Dsp \hookrightarrow L^{q}_{\text{loc}}(\R^N)$ is compact. Therefore, there exists $n_1 \in \N$ such that up to a subsequence
\begin{equation}\label{2.7}
    \int_{K} |u_n-u|^q\,\dx < \frac{\epsilon}{2M}, \;  \forall \, n \geq n_1.
\end{equation}
From \eqref{2.6} and \eqref{2.7}, we deduce that
\[\int_{\R^N} |g||u_n-u|^q\,\dx < \epsilon, \;  \forall \, n \geq n_1. \]
Since $\epsilon>0$ is arbitrary, we get the desired result.
\end{proof}

Now, we prove the compactness of $K_a$ using a similar splitting argument as above.

\begin{proposition} \label{locally lipschitz K_a}
Let $p \in (1, \frac{N}{s})$. Assume that $f$ satisfies \rm{\ref{f1}} and $g \in L^1(\R^N)\cap L^\infty(\R^N)$. Then $K_a$ is compact on $\Dsp$ for every $a \geq 0$.
\end{proposition}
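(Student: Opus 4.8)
The plan is to show that $K_a$ maps weakly convergent sequences to strongly convergent ones, i.e. that $K_a$ is sequentially weak-to-strong continuous (hence compact in the usual sense for $C^1$ functionals on a reflexive space). So let $u_n \rightharpoonup u$ in $\Dsp$; I want to prove $K_a(u_n) \to K_a(u)$. By the characterization \eqref{dsp}, the sequence $\{u_n\}$ is bounded in $L^{p_s^*}(\R^N)$, say $\|u_n\|_{p_s^*} \le M_0$ for all $n$, and also $\|u\|_{p_s^*}\le M_0$. Using the bound \eqref{2.3} (valid for $a$ in the relevant range; for $a=0$ one uses the analogous bound coming from \ref{f1} and continuity of $f$, or simply \eqref{2.2} with the $a$-term absent), we have $|F_a(t)| \le C(|t| + |t|^\gamma)$ with $\gamma \in (p, p_s^*)$.

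The key step is a splitting argument on the weight $g$, entirely parallel to the proof of Proposition \ref{compactness weight g}(ii). Fix $\epsilon > 0$. Since $g \in L^\infty(\R^N) \cap L^1(\R^N)$, it lies in $L^\alpha(\R^N)$ for every $\alpha \in [1,\infty]$; in particular $g \in L^{\alpha_1}(\R^N)$ with $\alpha_1 = p_s^*/(p_s^* - 1)$ (the conjugate controlling the $|t|$ term) and $g \in L^{\alpha_2}(\R^N)$ with $\alpha_2 = p_s^*/(p_s^* - \gamma)$ (the conjugate controlling the $|t|^\gamma$ term, finite because $\gamma < p_s^*$). Pick $g_\epsilon \in C_c(\R^N)$ with $K := \mathrm{supp}(g_\epsilon)$ compact and $\|g - g_\epsilon\|_{\alpha_1}$, $\|g - g_\epsilon\|_{\alpha_2}$ both small enough; note $|F_a(u_n) - F_a(u)| \le |F_a(u_n)| + |F_a(u)| \le C(|u_n| + |u| + |u_n|^\gamma + |u|^\gamma)$, so by Hölder
\[
\int_{\R^N} |g - g_\epsilon|\,|F_a(u_n) - F_a(u)|\,\dx \le C\big( \|g-g_\epsilon\|_{\alpha_1}(\|u_n\|_{p_s^*} + \|u\|_{p_s^*}) + \|g-g_\epsilon\|_{\alpha_2}(\|u_n\|_{p_s^*}^\gamma + \|u\|_{p_s^*}^\gamma)\big),
\]
which is $< \epsilon/2$ once the two norms are chosen small relative to the fixed bound $M_0$. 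For the remaining piece, on the compact set $K$ we have $g_\epsilon$ bounded, and by Proposition \ref{compactness weight g}(i) the embedding $\Dsp \hookrightarrow L^q_{\mathrm{loc}}(\R^N)$ is compact for every $q \in [1, p_s^*)$, so $u_n \to u$ in $L^\gamma(K)$ and in $L^1(K)$; passing to a subsequence, $u_n \to u$ a.e.\ on $K$ with an $L^\gamma(K)$-dominating function, hence by the continuity of $F_a$ and the generalized dominated convergence theorem $\int_K |g_\epsilon|\,|F_a(u_n) - F_a(u)|\,\dx \to 0$. Thus for $n$ large this term is $< \epsilon/2$, giving $\int_{\R^N} |g|\,|F_a(u_n) - F_a(u)|\,\dx < \epsilon$ along the subsequence; a standard subsequence-of-subsequence argument promotes this to the full sequence, so $K_a(u_n) \to K_a(u)$.

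The only mildly delicate point is the application of dominated convergence on $K$: strictly, compact embedding gives norm convergence $u_n \to u$ in $L^\gamma(K)$, from which one extracts a further subsequence converging a.e.\ and dominated by some $h \in L^\gamma(K)$; since $|g_\epsilon F_a(u_n)| \le C\|g_\epsilon\|_\infty(|h| + |h|^\gamma) \in L^1(K)$, DCT applies. Everything else is the routine $\epsilon$-splitting already carried out in Proposition \ref{compactness weight g}, so I expect no real obstacle — the main care is just in bookkeeping the two Hölder exponents $\alpha_1,\alpha_2$ coming from the two terms in the growth bound on $F_a$, and in recalling that $g \in L^1 \cap L^\infty$ gives membership in every intermediate $L^\alpha$.
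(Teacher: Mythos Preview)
Your proposal is correct and follows essentially the same approach as the paper: the same $g_\epsilon$-splitting with the two H\"older exponents $\alpha_1 = p_s^*/(p_s^*-1)$ and $\alpha_2 = p_s^*/(p_s^*-\gamma)$, the same use of the bound \eqref{2.3} on $F_a$, and the same compact embedding $\Dsp \hookrightarrow L^\gamma_{\mathrm{loc}}(\R^N)$ combined with the generalized dominated convergence theorem on the compact support of $g_\epsilon$. The only cosmetic difference is that the paper invokes generalized dominated convergence directly via $\int_K |u_n|^\gamma \to \int_K |u|^\gamma$ rather than extracting an $L^\gamma$-dominating function, but the two formulations are equivalent here.
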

\begin{proof}
 Let $u_n \rightharpoonup u$ in $\Dsp$. Since $\C_c(\R^N)$ is dense in both $L^{\frac{p_s^*}{p_s^*-1}}(\R^N)$ and $L^{\frac{p_s^*}{p_s^*-\gamma}}(\R^N)$, for every given $\epsilon>0$ we take $g_\epsilon \in \C_c(\R^N)$ such that
 \begin{align}\label{2.8}
     \|g-g_\epsilon\|_{\frac{p_s^*}{p_s^*-1}}+\|g-g_\epsilon\|_{\frac{p_s^*}{p_s^*-\gamma}}< \frac{\epsilon}{L},
 \end{align}
where $L:= \sup\{\|u_n\|_{p_s^*} + \|u\|_{p_s^*} +\|u_n\|_{p_s^*}^\gamma + \|u\|_{p_s^*}^\gamma:n \in \N\}$. Since the sequence $\{\|u_n\|_{s,p}\}$ is bounded in $\mathbb{R}^+$, using the continuous embedding $\Dsp \hookrightarrow L^{p_s^*}(\R^N)$ we see that $\{\|u_n\|_{p_s^*}\}$ is also bounded in $\mathbb{R}^+$. Therefore, $L<\infty$.
For $a\geq 0$, we write 
    \begin{align} \label{2.9}
        &\big|K_a(u_n)-K_a(u)\big| \le \int_{\R^N} g(x) \left| \big(F_a(u_n(x))-F_a(u(x)) \big)\right| \,\dx  \notag\\
        &\leq \int_{\R^N} |g-g_\epsilon|\big|F_a(u_n(x))-F_a(u(x)) \big|\,\dx + \int_{\R^N} |g_\epsilon|\big|F_a(u_n(x))-F_a(u(x)) \big|\,\dx := \text{I} + \text{II}.
    \end{align}
 Now using \eqref{2.3} and H\"{o}lder's inequality with \eqref{2.8}, we estimate the first integral as
 \begin{align}\label{2.10}
    \text{I}&\leq \int_{\R^N} |g-g_\epsilon|\big(|F_a(u_n)|+ |F_a(u)| \big)\,\dx \leq C \int_{\R^N} |g-g_\epsilon|\big(|u_n| + |u_n|^\gamma + |u| + |u|^\gamma \big)\,\dx \notag\\
    & \leq C \big( \|g-g_\epsilon\|_{\frac{p_s^*}{p_s^*-1}} (\|u_n\|_{p_s^*} + \|u\|_{p_s^*}) + \|g-g_\epsilon\|_{\frac{p_s^*}{p_s^*-\gamma}} (\|u_n\|_{p_s^*}^\gamma + \|u\|_{p_s^*}^\gamma)\big) < C \epsilon,
 \end{align}
where $C=C(f,a)$.  Further, we estimate the following integral
 \begin{align} \label{2.11}
    \text{II}= \int_{\R^N} |g_\epsilon|\big|F_a(u_n)-F_a(u) \big|\,\dx \leq M \int_{K}\big|F_a(u_n)-F_a(u) \big|\,\dx,
 \end{align}
where $K$ is the support of $g_\epsilon$ and $M=\|g_\epsilon\|_{\infty}$.  Since $\Dsp$ is compactly embedded into $L^{\gamma}_{\text{loc}}(\R^N)$ (Proposition \ref{compactness weight g}), up to a subsequence we get $u_n \rightarrow u$ in $L^{\gamma}(K)$, and subsequently $u_n(x) \rightarrow u(x)$ a.e. in $K$. Moreover, $|F_a(u_n)| \leq C (|u_n| + |u_n|^\gamma)$ and $\int_{K}|u_n|\,\dx \rightarrow \int_{K}|u|\,\dx,~~\int_{K}|u_n|^\gamma\,\dx \rightarrow \int_{K}|u|^\gamma\,\dx$. Therefore, the generalized dominated convergence theorem yields $F_a(u_n) \rightarrow F_a(u)$ in $L^{1}(K)$. Thus from \eqref{2.11},
 \[\int_{\R^N} |g_\epsilon|\big|F_a(u_n)-F_a(u) \big|\,\dx \rightarrow 0  ~~\text{ as } n \rightarrow \infty. \]
Now we conclude from \eqref{2.9} and \eqref{2.10} that $K_a(u_n) \rightarrow K_a(u)$ as $n \rightarrow \infty$.
\end{proof}

Now, depending on the values of $a$, we verify the mountain pass geometry for the energy functional $I_a$. 

\begin{lemma}\label{MP1}
Let $p \in (1, \frac{N}{s})$. Let $f$ satisfies \rm{\ref{f1}}, \rm{\ref{f2}} and $g \in L^1(\R^N)\cap L^\infty(\R^N)$ be positive. Then the following hold:
\begin{itemize}
       \item[\rm{(i)}] There exists $\beta, \delta>0$, and $a_1>0$ such that $I_a(u) \geq \delta$ for $\norm{u}_{s,p}=\beta$ whenever $a \in (0,a_1)$.
       \item[\rm{(ii)}] There exists $v \in \Dsp$ with $\|v\|_{s,p}> \beta$ such that $I_a(v)<0$, for all $a > 0$. 
   \end{itemize} 
\end{lemma}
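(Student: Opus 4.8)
The plan is to verify the two conditions of the mountain pass geometry directly from the growth estimates \eqref{2.2} and \eqref{2.4}, together with the Sobolev embedding $\Dsp \hookrightarrow L^{p_s^*}(\R^N)$ and the embedding $\Dsp \hookrightarrow L^\gamma(\R^N,|g|)$ (Proposition \ref{compactness weight g}(ii), which applies since $g \in L^1 \cap L^\infty$ forces $g \in L^\alpha$ for all $\alpha \ge 1$, in particular for $\alpha = p_s^*/(p_s^*-\gamma)$ and $\alpha = p_s^*/(p_s^*-p)$).

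\textbf{Step (i): the local minimum near the origin.} For $u$ with $\norm{u}_{s,p} = \beta$, I would bound $K_a(u) = \int_{\R^N} g F_a(u)\,\dx$ from above using $|F_a(t)| \leq \epsilon |t|^p + C(f,t_1(\epsilon))|t|^\gamma + a|t|$ from \eqref{2.2}. Applying H\"older with the weight $g$ (or using the continuous embeddings into $L^p(\R^N,|g|)$, $L^\gamma(\R^N,|g|)$, and $L^1(\R^N,|g|) \subset L^{p_s^*}(\R^N,|g|)$-type estimates since $g \in L^1 \cap L^\infty$), one gets $K_a(u) \leq \epsilon C_1 \norm{u}_{s,p}^p + C_2 \norm{u}_{s,p}^\gamma + a C_3 \norm{u}_{s,p}$ for constants depending only on $N,s,p,\gamma,f,g$. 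Hence
\[
I_a(u) = \frac1p \norm{u}_{s,p}^p - K_a(u) \geq \left(\frac1p - \epsilon C_1\right)\beta^p - C_2 \beta^\gamma - aC_3\beta.
\]
Now fix $\epsilon$ small so that $\frac1p - \epsilon C_1 \geq \frac1{2p}$; since $\gamma > p$, choose $\beta>0$ small enough that $C_2 \beta^\gamma \leq \frac1{4p}\beta^p$, giving $I_a(u) \geq \frac1{4p}\beta^p - aC_3\beta$. Then pick $a_1 := \beta^{p-1}/(8pC_3)$, so that for $a \in (0,a_1)$ we have $aC_3\beta \leq \frac1{8p}\beta^p$ and therefore $I_a(u) \geq \frac1{8p}\beta^p =: \delta > 0$. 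This is essentially a routine bookkeeping of constants.

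\textbf{Step (ii): the far point with negative energy.} Here I would fix any nonzero $w \in C_c^\infty(\R^N)$ with $w \geq 0$, $w \not\equiv 0$, and examine $I_a(tw)$ as $t \to \infty$. Using \eqref{2.4}, $F(tw(x)) \geq M_1 t^\vartheta w(x)^\vartheta - M_2$ on $\{w>0\}$, so $F_a(tw) = F(tw) - atw \geq M_1 t^\vartheta w^\vartheta - M_2 - atw$ on the support of $w$. Since $g$ is positive and $w$ has compact support, $\int g F_a(tw)\,\dx \geq M_1 t^\vartheta \int g w^\vartheta\,\dx - C(w,g) - a t C'(w,g)$ with $\int g w^\vartheta > 0$. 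Therefore
\[
I_a(tw) \leq \frac{t^p}{p}\norm{w}_{s,p}^p - M_1 t^\vartheta \int_{\R^N} g w^\vartheta\,\dx + C(w,g) + a t C'(w,g),
\]
and since $\vartheta > p$, the right-hand side tends to $-\infty$ as $t \to \infty$. Choosing $t$ large enough that $\|tw\|_{s,p} > \beta$ and $I_a(tw) < 0$, and setting $v := tw$, gives the claim. (One should note the bound can be taken uniform in $a$ over the bounded interval $(0,a_1)$, though the statement as phrased only needs existence of such a $v$ for each $a$.)

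I do not anticipate a genuine obstacle here — both items follow from the elementary growth bounds already recorded in \eqref{2.2}--\eqref{2.4} and the embedding properties of $\Dsp$. The only point requiring mild care is organizing the constants in Step (i) so that the threshold $a_1$ is chosen \emph{after} $\epsilon$ and $\beta$ are fixed (the order of quantifiers matters), and ensuring the weighted integrals are controlled using $g \in L^1 \cap L^\infty$ rather than any decay hypothesis, since \eqref{g1 new1} is not assumed in this lemma.
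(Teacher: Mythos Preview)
Your proposal is correct and follows essentially the same route as the paper's proof: both parts use the growth bounds \eqref{2.2} and \eqref{2.4} together with the weighted embeddings $\Dsp \hookrightarrow L^q(\R^N,g)$ from Proposition~\ref{compactness weight g}, and the bookkeeping of constants in Step~(i) (choosing $\epsilon$, then $\beta$, then $a_1$) is only cosmetically different from the paper's formulation via the auxiliary function $A(\beta)$.
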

\begin{proof}
  (i)  The functional $I_a : \Dsp \rightarrow \R$  is given by
    \[I_a(u)= \frac{1}{p}\Gagp - \int_{\R^N} g(x)F_a(u)\, \dx.\]
    Using \eqref{2.2}, we estimate
\begin{align*}
    \int_{\R^N} g(x)F_a(u)\,\dx &\leq \int_{\R^N} g(x)\big(\epsilon |u|^p+ C(f,\epsilon) |u|^\gamma+ a|u| \big)\, \dx \\
    &= \epsilon \int_{\R^N} g(x)|u|^p\,\dx +C(f,\epsilon) \int_{\R^N} g(x)|u|^\gamma\,\dx + a \int_{\R^N} g(x)|u|\,\dx\\
    &\leq \epsilon C_1 \norm{u}_{s,p}^p + C(f,\epsilon) C_2 \norm{u}_{s,p}^\gamma + a C_3 \|u\|_{s,p},
\end{align*}  
where $C_1, C_2$ and $C_3$ are the embedding constants of $\Dsp \hookrightarrow L^q( \R^N,g); q\in [1, p^*_s)$ (Proposition \ref{compactness weight g}). In particular, for $\norm{u}_{s,p}=\beta$,
\begin{align}\label{2.12}
    I_a(u) &\geq \frac{1}{p} \beta^p - \epsilon C_1 \beta^p - C(f,\epsilon) C_2 \beta^\gamma - a C_3 \beta \no \\
    &= \beta^p \bigg(\frac{1}{p} - \epsilon C_1 - C(f,\epsilon) C_2 \beta^{\gamma-p} \bigg)-a C_3 \beta.
\end{align}
We choose $\epsilon < (pC_1)^{-1}$. Then we write $I_a(u) \geq A(\beta) - aC_3 \beta$, where $A(\beta) = C \beta^p (1-\widetilde{C}\beta^{\gamma -p})$ with $C,\widetilde{C}$ independent of $a$. Let $\beta_1$ be the first non-trivial zero of $A$. For $\beta < \beta_1$, we fix $a_1 \in (0, \frac{A(\beta)}{C_3 \beta})$ and $\delta = A(\beta)-a_1 C_3 \beta$. Thus, by \eqref{2.12} we get $I_a(u) \geq \delta$ for every $a \in (0,a_1)$. \\

\noi (ii) Let $\varphi \in C_c^\infty(\R^N) \setminus \{0\}, \varphi \geq 0$ and $\|\varphi\|_{s,p}=1$. For $t \geq 0$, we have
    \[ I_a(t \varphi) = \frac{t^p}{p} \iint_{\R^{2N}}\frac{|\varphi(x)-\varphi(y)|^p}{|x-y|^{N+sp}}\, \dxy
    - \int_{\R^N}g(x) \left(F(t \varphi) -at\phi  \right) \,\dx.\]
Now using the A-R condition \eqref{2.4} of $F$, we obtain the following
\begin{align}\label{2.13}
    I_a(t \varphi) &\leq \frac{t^p}{p} \|\varphi\|_{s,p}^p
    -M_1t^\vartheta \int_{\R^N}g(x)(\varphi(x))^\vartheta\,\dx + M_2 \int_{\R^N}g(x)\,\dx+ at \int_{\R^N}g(x) \varphi(x)\,\dx  \notag\\
    &\leq  \frac{t^p}{p} -M_1 t^\vartheta \int_{\R^N}g(x)(\varphi(x))^\vartheta\,\dx+ M_2 \|g\|_1  + at \norm{\varphi}_{\infty} \norm{g}_1.
\end{align}
Using the fact $\vartheta >p >1$, it is easy to see that $I_a(t \varphi) \rightarrow - \infty$ as $t \rightarrow +\infty$. Thus, there exists a $t_1 > \beta$ such that $I_a(t\varphi)<0$ for $t>t_1$. Therefore, the required function is $v= t\varphi$ with $t>t_1$.
\end{proof}

\begin{definition}[Palais Smale condition]
Let $J:\Dsp \rightarrow \R$ be a continuously differentiable functional. Then $J$ satisfies the Palais Smale (PS) condition if every sequence $\{u_n\} \subset \Dsp$ with $\{J(u_n)\}$ is bounded in $\R$ and $J'(u_n) \rightarrow 0$ in $(\Dsp)^*$ possesses a convergent subsequence.
\end{definition}

\begin{proposition} \label{Palais smale I_a}
Let $p \in (1, \frac{N}{s})$. Let $f$ satisfies \rm{\ref{f1}}, \rm{\ref{f2}}, and  $g \in L^1(\R^N)\cap L^\infty(\R^N)$ be positive. Then $I_a$ satisfies the (PS) condition for every $a \geq 0$.
\end{proposition}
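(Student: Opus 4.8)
The plan is to follow the standard two-step scheme for proving the Palais--Smale condition for mountain-pass-type functionals: first extract a bounded Palais--Smale sequence, then upgrade weak convergence to strong convergence using the compactness of $K_a$ and the $(S_+)$-property of the fractional $p$-Laplace operator.

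\textbf{Step 1: Boundedness of PS sequences.} Let $\{u_n\} \subset \Dsp$ satisfy $|I_a(u_n)| \le M$ for all $n$ and $I_a'(u_n) \to 0$ in $(\Dsp)^*$, so that $I_a'(u_n)(u_n) = o(1)\|u_n\|_{s,p}$. The idea is the classical Ambrosetti--Rabinowitz trick: using the primitive-level A--R inequality \eqref{2.5}, compute
\[
\vartheta I_a(u_n) - I_a'(u_n)(u_n) = \left(\frac{\vartheta}{p} - 1\right)\|u_n\|_{s,p}^p + \int_{\R^N} g(x)\big(f_a(u_n)u_n - \vartheta F_a(u_n)\big)\,\dx \ge \left(\frac{\vartheta}{p}-1\right)\|u_n\|_{s,p}^p - M_3\|g\|_1,
\]
since $f_a(t)t - \vartheta F_a(t) \ge -M_3$ pointwise. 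The left-hand side is bounded above by $\vartheta M + o(1)\|u_n\|_{s,p}$, and since $\vartheta > p$ the coefficient $\vartheta/p - 1 > 0$; a Young/absorption argument on the $o(1)\|u_n\|_{s,p}$ term then yields $\sup_n \|u_n\|_{s,p} < \infty$.

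\textbf{Step 2: From weak to strong convergence.} By reflexivity of $\Dsp$ (it is a uniformly convex Banach space) and Step 1, up to a subsequence $u_n \rightharpoonup u$ in $\Dsp$. By Proposition \ref{locally lipschitz K_a}, $K_a$ is compact, hence $K_a'(u_n) \to K_a'(u)$ strongly in $(\Dsp)^*$; in particular $K_a'(u_n)(u_n - u) \to 0$. Combining this with $I_a'(u_n)(u_n - u) \to 0$ (which holds because $I_a'(u_n) \to 0$ in the dual and $\{u_n - u\}$ is bounded), and recalling $I_a'(u_n) = A(u_n) - K_a'(u_n)$ where $A$ denotes the fractional $p$-Laplace form, we obtain
\[
\big\langle A(u_n), u_n - u\big\rangle = \iint_{\R^{2N}} \frac{\Phi(u_n(x)-u_n(y))\big((u_n-u)(x) - (u_n-u)(y)\big)}{|x-y|^{N+sp}}\,\dxy \longrightarrow 0.
\]
Since also $\langle A(u), u_n - u\rangle \to 0$ by weak convergence, subtracting gives $\langle A(u_n) - A(u),\, u_n - u\rangle \to 0$. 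The monotonicity inequality $(\Phi(b) - \Phi(b'))(b - b') \ge c_p |b - b'|^p$ for $p \ge 2$ (or the analogous bound for $1 < p < 2$ via Hölder) applied to the Gagliardo difference quotients then forces $\|u_n - u\|_{s,p} \to 0$; that is, $A$ satisfies the $(S_+)$-property on $\Dsp$. This gives the strongly convergent subsequence and completes the proof.

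\textbf{Main obstacle.} The only genuinely delicate point is Step 2's $(S_+)$-argument in the singular range $1 < p < 2$, where the clean inequality $(\Phi(b)-\Phi(b'))(b-b') \ge c_p|b-b'|^p$ fails; one must instead use $(\Phi(b)-\Phi(b'))(b-b') \ge c_p |b-b'|^2/(|b|+|b'|)^{2-p}$ together with a Hölder interpolation against the uniformly bounded Gagliardo seminorms to recover $\|u_n-u\|_{s,p}^p \to 0$. This is by now standard for $(-\Delta)_p^s$, so I would cite the relevant reference (e.g. the simple inequalities in \cite{Brasco2016_Optimaldeacy} or a standard monotonicity lemma) rather than reproving it. Everything else — boundedness, the passage $K_a'(u_n) \to K_a'(u)$, and the dual-pairing manipulations — is routine.
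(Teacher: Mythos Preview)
Your proof is correct. Step~1 matches the paper's argument essentially line for line. In Step~2 the paper takes a somewhat different and cleaner path: rather than invoking the pointwise $(S_+)$-type inequalities for $\Phi$ (which, as you note, forces a case split at $p=2$), the paper observes the elementary norm-level estimate
\[
\langle A(u_n)-A(u),\,u_n-u\rangle \;\ge\; \big(\|u_n\|_{s,p}^{p-1}-\|u\|_{s,p}^{p-1}\big)\big(\|u_n\|_{s,p}-\|u\|_{s,p}\big)\;\ge\;0,
\]
obtained simply by expanding the pairing and applying H\"older. This yields $\|u_n\|_{s,p}\to\|u\|_{s,p}$ directly for \emph{all} $p>1$, and then uniform convexity of $\Dsp$ gives $u_n\to u$. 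Your route is the standard $(S_+)$ argument and is perfectly valid; the paper's buys a uniform treatment with no singular-case interpolation.

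One small imprecision: Proposition~\ref{locally lipschitz K_a} proves that $K_a$ is compact (weak-to-strong continuity of the \emph{functional}), not that $K_a'$ is compact as a map into the dual; the implication ``$K_a$ compact $\Rightarrow K_a'(u_n)\to K_a'(u)$ in $(\Dsp)^*$'' is not automatic. The paper instead verifies $K_a'(u_n)(u_n-u)\to 0$ by hand, using the growth bound \eqref{2.3}, H\"older's inequality, and the compact embedding $\Dsp\hookrightarrow L^q(\R^N,g)$ from Proposition~\ref{compactness weight g}. Your conclusion is correct, but the justification should point to these ingredients rather than to compactness of $K_a$ itself.
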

\begin{proof}
Let $\{u_n\}$ be a sequence in $\Dsp$ such that $\{I_a(u_n)\}$ is bounded in $\R$ and $I_a'(u_n) \rightarrow 0$ in $(\Dsp)^*$. We need to show that the sequence $\{u_n\}$ has a strongly convergent subsequence in $\Dsp$. First, we show that $\{u_n\}$ is a bounded sequence in $\Dsp$. Since $\{I_a(u_n)\}$ is bounded, we have $|I_a(u_n)|\leq M$ for some $M>0$, which further implies
\begin{equation}\label{2.14}
    \frac{1}{p}\norm{u_n}_{s,p}^p - \int_{\R^N} g(x)F_a(u_n)\,\dx \leq M, \; \forall \, n\in \N.
\end{equation}
Next, using $I_a'(u_n) \rightarrow 0$ in $(\Dsp)^*$, there exists $n_1 \in \N$ such that for every $n \geq n_1$, we have $\abs{I_a'(u_n)(u_n)} \leq \norm{u_n}_{s,p}$. Thus we obtain
\begin{equation}\label{2.15}
    -\norm{u_n}_{s,p} -\norm{u_n}_{s,p}^p \leq -\int_{\R^N} g(x)f_a(u_n)u_n\,\dx, \; \forall \, n \geq n_1.
\end{equation}
From \eqref{2.5} and \eqref{2.14}, we see that
\begin{equation}\label{2.16}
    \frac{1}{p}\norm{u_n}_{s,p}^p - \frac{1}{\vartheta}\int_{\R^N} g(x)f_a(u_n)u_n\,\dx- \frac{1}{\vartheta} M_3\norm{g}_1\leq M,
\end{equation}
and further by \eqref{2.15} and \eqref{2.16},
\begin{align*}
    \bigg(\frac{1}{p}-\frac{1}{\vartheta} \bigg)\norm{u_n}_{s,p}^p -\frac{1}{\vartheta}\norm{u_n}_{s,p} \leq M + \frac{1}{\vartheta}M_3 \norm{g}_1, \; \forall \, n \geq n_1.
\end{align*}
The above inequality infers that $\{u_n\}$ is bounded in $\Dsp$. By reflexivity of $\Dsp$, there exists $u \in \Dsp$ such that up to a subsequence $u_n \rightharpoonup u$ in $\Dsp$. Now we consider the functional $J(v) = \frac{1}{p}\|v\|_{s,p}^p$ for $v \in \Dsp$. Clearly, $J \in \C^1(\Dsp,\R)$. From \eqref{2.1}, we write $J'(u_n)(u_n -u) = I_a'(u_n)(u_n-u) + K_a'(u_n)(u_n-u)$. Now we claim that $J'(u_n)(u_n -u) \rightarrow 0$ as $n \rightarrow \infty$. First, we prove $I_a'(u_n)(u_n-u) \rightarrow 0$ as $n \rightarrow \infty$. Recall that $I_a'(u_n) \rightarrow 0 \text{ in } (\Dsp)^*$ and $\{u_n\}$ is bounded in $\Dsp$. Consequently, we deduce $| I_a'(u_n) (u_n-u) | \leq \|I_a'(u_n)\|_{*} \|u_n-u\|_{s,p} \rightarrow 0$ as $n \rightarrow \infty$. Next, we prove $K_a'(u_n)(u_n-u) \rightarrow 0$. Observe from \eqref{2.3} that
 \begin{align} \label{2.17}
     |K_a'(u_n)(u_n-u)| & \leq \int_{\R^N} g(x)|f_a(u_n)||u_n-u|\, \dx \no \\
     & \leq C(f,a) \int_{\R^N} g(x) \left( 1 + |u_n|^{\gamma-1} \right) |u_n-u|\, \dx.
 \end{align} 
The sequence $\{|u_n|^{\gamma -1}\}$ is bounded in $L^{\frac{p_s^*}{\gamma-1}}(\R^N)$ (as $\{u_n\}$ is bounded in $\Dsp$). Further, 
 \begin{align*}
     \gamma < p_s^* \Longleftrightarrow \frac{p_s^*}{p_s^*-(\gamma-1)} < p_s^*.
 \end{align*}
 Therefore, applying Proposition \ref{compactness weight g} and  $g \in L^1(\R^N) \cap L^{\infty}(\R^N)$, we get $u_n \ra u$ in $L^{\frac{p_s^*}{p_s^*-(\gamma-1)}}(\R^N,g)$. Thus, using H\"{o}lder's inequality with conjugate pair $(\frac{p_s^{*}}{\gamma-1}, \frac{p_s^*}{p_s^*-(\gamma-1)})$,
 \begin{align}
     \int_{\R^N} g(x)|u_n-u||u_n|^{\gamma-1}\,\dx \leq \norm{g}_{\infty}^{\frac{\gamma-1}{p^*_s}} \norm{u_n-u}_{L^{\frac{p_s^*}{p_s^*-(\gamma-1)}}(\R^N,g)} \||u_n|^{\gamma-1}\|_{\frac{p_s^{*}}{\gamma-1}} \rightarrow 0 \text{ as } n \rightarrow \infty.
 \end{align} 
Also, $\int_{\R^N} g(x)|u_n-u|\,\dx \rightarrow 0$ as $n \ra \infty$ (by Proposition \ref{compactness weight g}). Thus we have 
 \begin{align}\label{2.19}
     \int_{\R^N} g(x)\left( 1 + |u_n|^{\gamma-1}   \right)|u_n-u| \,\dx \rightarrow 0 \text{ as } n \rightarrow \infty.
 \end{align}
We infer from \eqref{2.17} and \eqref{2.19} that $K_a'(u_n)(u_n-u) \rightarrow 0$ as $n \rightarrow \infty$. Thus, our claim holds true. Since $J \in \C^1(\Dsp, \R)$ and $u_n \rightharpoonup u$ in $\Dsp$, we also get $J'(u)(u_n -u) \rightarrow 0$ as $n \rightarrow \infty$. Further, we estimate the following using H\"{o}lder inequality:
 \begin{align}\label{2.20}
     &J'(u_n)(u_n -u)-J'(u)(u_n -u)\no\\
     &~~=\iint_{\R^{2N}}\frac{\Phi\big(u_n(x)-u_n(y)\big)-\Phi \big(u(x)-u(y)\big)}{|x-y|^{N+sp}}\big((u_n-u)(x)-(u_n-u)(y)\big)\, \dxy \notag\\
     &~~= \Gagnp + \Gagp \notag\\
     &~~\; - \iint_{\R^{2N}}\frac{\Phi \big(u_n(x)-u_n(y)\big)\big(u(x)-u(y)\big)}{|x-y|^{N+sp}}\, \dxy- \iint_{\R^{2N}}\frac{\Phi\big(u(x)-u(y)\big)\big(u_n(x)-u_n(y)\big)}{|x-y|^{N+sp}}\, \dxy \notag\\
     &~~\geq \norm{u_n}_{s,p}^p + \norm{u}_{s,p}^p - \norm{u_n}_{s,p}^{p-1}\norm{u}_{s,p} - \norm{u}_{s,p}^{p-1}\norm{u_n}_{s,p} \notag\\
     &~~= \big(\norm{u_n}_{s,p}^{p-1}-\norm{u}_{s,p}^{p-1} \big) \big(\norm{u_n}_{s,p}-\norm{u}_{s,p} \big) \geq 0.
 \end{align}
Therefore, taking limit as $n \rightarrow \infty$ in \eqref{2.20}, we get
 $\norm{u_n}_{s,p} \rightarrow \norm{u}_{s,p}$. Since $\Dsp$ is uniformly convex Banach space, 
 $u_n \rightarrow u$ in $\Dsp$. 
\end{proof}

%%%%%%%%%%%%%%%%%%%%%%%%%%%%%%%%%%%%%%%%%%%%%%%
\section{Existence and qualitative properties of the mountain pass solutions} \label{Existence section}

The following theorem states the existence and uniform boundedness of the mountain pass solutions of \eqref{Main problem}.

\begin{theorem} \label{Existence and uniform boundedness}
    Let $p \in (1, \frac{N}{s})$. Assume that $f$ satisfies \rm{\ref{f1}} and \rm{\ref{f2}}. Let $g \in L^1(\R^N) \cap L^{\infty}(\R^N)$ be positive. Let $a_1 >0$ be given in Lemma \ref{MP1}.
    Then for each $a \in (0,a_1)$, \eqref{Main problem} admits a mountain pass solution $u_a$. Moreover, there exists $C>0$ such that $\|u_a\|_{s,p} \leq C$ for all $a \in (0,a_1)$.
\end{theorem}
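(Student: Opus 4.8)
The plan is to first apply the mountain pass theorem to produce $u_a$ and then prove the uniform $\Dsp$-bound by a careful inspection of the mountain pass value. For the existence part, fix $a \in (0, a_1)$. By Lemma \ref{MP1}, $I_a$ has the mountain pass geometry: there are $\beta, \delta > 0$ with $I_a(u) \ge \delta$ on $\norm{u}_{s,p} = \beta$, and a fixed $v = t\varphi$ (with $\varphi \in C_c^\infty(\R^N)$, $\varphi \ge 0$, $\norm{\varphi}_{s,p} = 1$, and $t > t_1 > \beta$) with $I_a(v) < 0$. By Proposition \ref{Palais smale I_a}, $I_a$ satisfies the (PS) condition. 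Since $I_a(0) = 0$ (as $F_a(0) = 0$) and $I_a \in \C^1(\Dsp, \R)$, the mountain pass theorem of Ambrosetti--Rabinowitz yields a critical point $u_a$ at the level
\[
c_a := \inf_{\eta \in \Gamma_a} \max_{\tau \in [0,1]} I_a(\eta(\tau)) \ge \delta > 0,
\]
where $\Gamma_a = \{\eta \in \C([0,1], \Dsp) : \eta(0) = 0, \ \eta(1) = v\}$. By \eqref{2.1}, $I_a'(u_a) = 0$ means exactly that $u_a$ is a weak solution of \eqref{Main problem}, i.e.\ $u_a$ is a mountain pass solution; and $u_a \ne 0$ since $I_a(u_a) = c_a > 0 = I_a(0)$.

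For the uniform bound, the key observation is that the {\it same} test path can be used for all $a \in (0, a_1)$. Indeed, take the linear path $\eta_0(\tau) = \tau v$, $\tau \in [0,1]$, where $v = t_1 \varphi$ with $t_1$ chosen (independently of $a$) so large that the bound \eqref{2.13} forces $I_a(\tau t_1 \varphi) < 0$ at $\tau = 1$ for every $a \in (0, a_1)$; this is possible because the only $a$-dependent term in \eqref{2.13} is $a \tau t_1 \norm{\varphi}_\infty \norm{g}_1 \le a_1 t_1 \norm{\varphi}_\infty \norm{g}_1$, which is dominated by $-M_1 (\tau t_1)^\vartheta \int g \varphi^\vartheta$ as $\tau t_1 \to \infty$ since $\vartheta > p > 1$. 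Along this path, using \eqref{2.13} again and $a < a_1$,
\[
\max_{\tau \in [0,1]} I_a(\tau v) \le \max_{\sigma \ge 0} \Big( \frac{\sigma^p}{p} - M_1 \sigma^\vartheta \textstyle\int_{\R^N} g \varphi^\vartheta \, \dx \Big) + M_2 \norm{g}_1 + a_1 t_1 \norm{\varphi}_\infty \norm{g}_1 =: C_0,
\]
and the right-hand side $C_0$ is finite and independent of $a$ (the supremum over $\sigma$ of $\sigma^p/p - c\sigma^\vartheta$ is finite for $c > 0$, $\vartheta > p$). Hence $c_a \le C_0$ for all $a \in (0, a_1)$.

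Finally I convert the uniform energy bound into a uniform norm bound by repeating the computation from the proof of Proposition \ref{Palais smale I_a}. Since $I_a'(u_a) = 0$, we have $I_a'(u_a)(u_a) = 0$, i.e.\ $\norm{u_a}_{s,p}^p = \int_{\R^N} g(x) f_a(u_a) u_a \, \dx$. Combining with $I_a(u_a) = c_a \le C_0$ and the A-R inequality \eqref{2.5},
\[
C_0 \ge I_a(u_a) - \frac{1}{\vartheta} I_a'(u_a)(u_a) = \Big( \frac{1}{p} - \frac{1}{\vartheta} \Big) \norm{u_a}_{s,p}^p - \frac{1}{p}\!\!\int_{\R^N}\!\! g(x) \big( \vartheta F_a(u_a) - u_a f_a(u_a) \big) \frac{\dx}{\vartheta} \ge \Big( \frac{1}{p} - \frac{1}{\vartheta} \Big) \norm{u_a}_{s,p}^p - \frac{M_3 \norm{g}_1}{\vartheta},
\]
so that $\big( \tfrac1p - \tfrac1\vartheta \big) \norm{u_a}_{s,p}^p \le C_0 + \tfrac{1}{\vartheta} M_3 \norm{g}_1$, with $M_3$ and $\norm{g}_1$ independent of $a$. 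Since $\vartheta > p$, this gives $\norm{u_a}_{s,p} \le C$ for all $a \in (0, a_1)$, as claimed. The main obstacle is the bookkeeping of $a$-uniformity: one must make sure that the comparison function $v$, the constants in the A-R condition \eqref{2.5}, and the mountain pass level estimate are all chosen independently of $a \in (0, a_1)$ — everything else is the standard Ambrosetti--Rabinowitz argument already prepared by Lemma \ref{MP1} and Proposition \ref{Palais smale I_a}.
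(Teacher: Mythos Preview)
Your proof is correct and follows essentially the same route as the paper's: mountain pass existence via Lemma~\ref{MP1} and Proposition~\ref{Palais smale I_a}, a uniform upper bound on $c_a$ by testing against the fixed linear path $\tau \mapsto \tau v$ and invoking \eqref{2.13}, and then the standard $I_a(u_a)-\tfrac{1}{\vartheta}I_a'(u_a)(u_a)$ combination with \eqref{2.5} to extract the uniform $\Dsp$-bound. The only blemish is a typo in your displayed identity (the stray factor $\tfrac{1}{p}$ in front of the integral should not be there; the correct coefficient is $\tfrac{1}{\vartheta}$), but your final inequality $\big(\tfrac{1}{p}-\tfrac{1}{\vartheta}\big)\norm{u_a}_{s,p}^p \le C_0 + \tfrac{1}{\vartheta}M_3\norm{g}_1$ is exactly right.
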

\begin{proof}
  Consider $a_1, \delta,v$ as given in Lemma \ref{MP1}. For $a \in (0,a_1)$, using Lemma \ref{MP1} and Proposition \ref{Palais smale I_a}, we observe that all the hypotheses of the mountain pass theorem \cite[Theorem 2.1]{Ambrosetti1973} are verified. Therefore, by \cite[Theorem 2.1]{Ambrosetti1973}, there exists a non-trivial critical point $u_a \in \Dsp$ of $I_a$ satisfying
   \begin{equation} \label{3.1}
       I_a(u_a) = c_a = \inf\limits_{\gamma \in \Gamma_v}\max\limits_{t \in [0,1]}I_a(\gamma(t)) \geq \delta \text{  and  } I_a'(u_a)=0,
   \end{equation}
where $\Gamma_v := \{ \gamma \in C([0,1], \Dsp):\gamma(0)=0  \text{ and } \gamma(1)=v \}$ and $c_a$ is the mountian pass level associated with $I_a$. Thus, $u_a$ is a non-trivial solution to \eqref{Main problem}. To prove the uniform boundedness of $u_a$ in $\Dsp$, we first show that the set $\{I_a(u_a): a \in (0,a_1)\}$ is uniformly bounded. We define a path $\Tilde{\gamma}:[0,1] \rightarrow \Dsp$ by $\Tilde{\gamma}(\sigma)=\sigma v$, where $v=t\varphi$ for some $t>t_1$ (for $t_1$ as in Lemma \ref{MP1}-(ii)), $\varphi \in \C_c^{\infty}(\R^N)\setminus\{0\},~ \varphi \geq 0$ and $\norm{\varphi}_{s,p}=1$. We see that $\Tilde{\gamma} \in \Gamma_v$ because $\Tilde{\gamma}(0)=0$ and $\Tilde{\gamma}(1)=v$. Now from \eqref{3.1}, we have
\begin{equation} \label{3.2}
       I_a(u_a) = c_a = \inf\limits_{\gamma \in \Gamma_v}\max\limits_{t \in [0,1]}I_a(\gamma(t)) \leq \max\limits_{\sigma \in [0,1]}I_a(\Tilde{\gamma}(\sigma)) = \max\limits_{\sigma \in [0,1]}I_a(\sigma t\varphi).
   \end{equation}
Now estimating as in \eqref{2.13}, we obtain
 \begin{align}\label{3.3}
    I_a(\sigma t \varphi) &\leq \frac{{\sigma}^pt^p}{p} \norm{\varphi}_{s,p}^p
    -M_1 {\sigma}^\vartheta t^\vartheta \int_{\R^N}g(x)(\varphi(x))^\vartheta\,\dx + M_2 \int_{\R^N}g(x)\,\dx+ a {\sigma} t \int_{\R^N}g(x) \varphi(x)\,\dx  \notag\\
    &\leq  \frac{t^p}{p} + M_2 \|g\|_1  + a_1 t C_1 \norm{\varphi}_{s,p}.
\end{align}
From \eqref{3.2} and \eqref{3.3}, there exists  $C= C(N,s,p,M_2,g,a_1)$ such that
 \begin{equation}\label{3.4}
     I_a(u_a) \leq C, \text{ for all } a \in (0,a_1).
 \end{equation}
 Using the uniform boundedness of $I_a(u_a)$, we will show that the solutions $\{u_a\}$ are uniformly bounded in $\Dsp$. From \eqref{3.1}, we write
 \begin{equation}\label{3.5}
     \norm{u_a}_{s,p}^p - \int_{\R^N}g(x)f_a(u_a)u_a\,\dx = 0.
 \end{equation}
 Also, by \eqref{3.4} we have
 \begin{equation}\label{3.6}
     \frac{1}{p}\norm{u_a}_{s,p}^p - \int_{\R^N}g(x)F_a(u_a)\,\dx \leq C.
 \end{equation}
Now first multiplying \eqref{3.5} by $\frac{1}{\vartheta}$ and then subtracting into \eqref{3.6} gives the following
\[ \bigg(\frac{1}{p}-\frac{1}{\vartheta}\bigg)\norm{u_a}_{s,p}^p + \int_{\R^N}g(x) \bigg(\frac{1}{\vartheta}f_a(u_a)u_a- F_a(u_a) \bigg)\,\dx \leq C,\]
and combining the above with \eqref{2.5} yields
\[ \bigg(\frac{1}{p}-\frac{1}{\vartheta}\bigg)\norm{u_a}_{s,p}^p -\frac{1}{\vartheta}M_3\|g\|_1 \leq C.\]
Thus, there exists $C=C(N,s,p,f,g,a_1)$ such that $\norm{u_a}_{s,p} \leq C$ for every $a \in (0,a_1)$.
\end{proof}

From Theorem \ref{Existence and uniform boundedness} and the embedding $\Dsp \hookrightarrow L^{p^*_s}(\R^N)$ it is clear that $\{ u_a: a \in (0, a_1)\}$ is uniformly bounded on $L^{p^*_s}(\R^N)$.  In the following proposition, using the Moser iteration technique, we discuss the uniform boundedness of $\{ u_a: a \in (0, a_1) \}$ over $L^r(\R^N)$ for every $r \in [p^*_s, \infty]$.

\begin{remark}\label{pm part}
    For $h \in \Dsp$, we check that $h^{\pm} \in \Dsp$ and $\norm{h^{\pm}}_{s,p} \le \norm{h}_{s,p}$. We verify this for the positive part of $h$; a similar argument holds for the negative part. Set $A_h := \{ x \in \R^N : h(x) \ge 0\}$. Then we see that
    \begin{align*}
      \norm{h^+}_{s,p}^{p} = & \iint_{\R^{2N}} \frac{ \abs{h^+(x) - h^+(y)}^{p} }{\abs{x-y}^{N+sp}} \, \dxy = \left( \iint_{A_h \times A_h} + 2 \iint_{A_h \times (A_h)^c} \right) \frac{ \abs{h^+(x) - h^+(y)}^{p} }{\abs{x-y}^{N+sp}} \, \dxy \\
        & = \iint_{A_h \times A_h} \frac{ \abs{h(x) - h(y)}^{p} }{\abs{x-y}^{N+sp}} \, \dxy + 2 \iint_{A_h \times (A_h)^c} \frac{\abs{h(x)}^{p} }{\abs{x-y}^{N+sp}} \, \dxy \\
        & \le \left( \iint_{A_h \times A_h} + 2 \iint_{A_h \times (A_h)^c} + \iint_{(A_h)^c \times (A_h)^c}  \right)  \frac{ \abs{h(x) - h(y)}^{p} }{\abs{x-y}^{N+sp}} \, \dxy = \norm{h}_{s,p}^{p}.
    \end{align*}
\end{remark}

\begin{proposition}\label{regularity}
   Let $p \in (1, \frac{N}{s})$. Let $f$ satisfies \rm{\ref{f1}}, \rm{\ref{f2}}, and  $g \in L^1(\R^N)\cap L^\infty(\R^N)$ be positive. In addition, we assume that $f$ satisfies the following condition at infinity:
    \begin{enumerate}[label={($\bf \tilde{f{\arabic*}}$)}]
    \setcounter{enumi}{0}
    \item \label{f1 new} \begin{center}
        $\displaystyle \lim\limits_{t \rightarrow \infty} \frac{f(t)}{t^{p^*_s-1}} = 0$.  
    \end{center}
   \end{enumerate}
    Then for every $r \in [p^*_s, \infty]$ there exists $C=C(r,N,s,p,f,g,a_1)>0$ such that
    \begin{align}\label{3.7 unifrom bound}
        \norm{u_a}_{r} \le C, \quad \forall \, a \in (0, a_1).
    \end{align}
\end{proposition}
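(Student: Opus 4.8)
The strategy is the standard Moser iteration scheme adapted to the fractional $p$-Laplacian, carefully tracking all constants so that they depend only on the uniform bound $\sup_{a\in(0,a_1)}\|u_a\|_{s,p}\le C_0$ from Theorem~\ref{Existence and uniform boundedness} and never on $a$ itself. Write $u=u_a$. First I would note that by Remark~\ref{pm part} we may work with $u^+$ (the negative part is already controlled, indeed $u\ge -1$ will follow, and $f_a$ vanishes for $t\le-1$), so it suffices to bound $u^+$ in every $L^r$, $r\in[p_s^*,\infty]$. Using \ref{f1 new} together with \ref{f1}, for every $\varepsilon>0$ there is $C_\varepsilon>0$ with
\[
|f_a(t)|\le \varepsilon |t|^{p_s^*-1}+C_\varepsilon+ a \le \varepsilon |t|^{p_s^*-1}+C_\varepsilon', \qquad t\in\R,
\]
with $C_\varepsilon'$ uniform for $a\in(0,a_1)$; this is the crucial structural input that makes the ``critical growth at infinity is subcritical in the limit'' argument work.

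**The iteration step.** For $\beta\ge 1$ and $T>0$ set $w = w_{\beta,T} := u^+ \min\{(u^+)^{\beta-1},T^{\beta-1}\}$ and use the Lipschitz test function $\varphi = u^+\min\{(u^+)^{p(\beta-1)},T^{p(\beta-1)}\}\in\Dsp$ in the weak formulation. By the algebraic inequality for the fractional $p$-Laplacian (the convexity estimate: for the map $t\mapsto |t|^{p-2}t$ one has, with the truncated power function, $|\Phi(a-b)|\,|\psi(a)-\psi(b)|\ge c_p |w(a)-w(b)|^p$ for the appropriate $w,\psi$; see e.g. \cite{Brasco2014_fractional_Cheeger}), the left side controls $c_p\,\beta^{-p+1}\|w\|_{s,p}^p$, hence by the Sobolev inequality \eqref{dsp}, $\|w\|_{p_s^*}^p\le C\beta^{p-1}\int_{\R^N}g(x)|f_a(u)|\,|\varphi|\,\dx$. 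On the right I would split $|f_a(u)|\le \varepsilon (u^+)^{p_s^*-1}+C_\varepsilon'$; the term with $C_\varepsilon'$ is harmless since $g\in L^1\cap L^\infty$ and $\varphi\le w^p/u^+\cdot u^+$-type bounds give it in terms of $\|w\|$ at a lower power, while for the $\varepsilon$-term one writes $g(x)(u^+)^{p_s^*-1}|\varphi|\le \|g\|_\infty (u^+)^{p_s^*-p}w^p$ and applies Hölder with exponents $(p_s^*/(p_s^*-p), \text{dual})$; the factor $\|u^+\|_{p_s^*}^{p_s^*-p}\le C_0^{p_s^*-p}$ is uniformly bounded, so choosing $\varepsilon$ small (depending only on $C_0,\|g\|_\infty$, not on $\beta$) absorbs this into the left-hand side. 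This yields the recursive inequality $\|w_{\beta,T}\|_{p_s^*}^p\le C\beta^{p-1}\big(1+\|u^+\|_{\beta p_s^*}^{\beta p}\big)$ uniformly in $T$; letting $T\to\infty$ by monotone convergence gives $\|u^+\|_{\beta p_s^{*2}/(N-sp)\cdot(1/p)}$ — concretely $\|(u^+)^\beta\|_{p_s^*}^p\le C\beta^{p-1}(1+\|(u^+)^\beta\|_p^p)$ with $\|(u^+)^\beta\|_p = \|u^+\|_{\beta p}^\beta$ — i.e. an improvement $u^+\in L^{\beta p}\Rightarrow u^+\in L^{\beta p_s^*}$ with explicit constant.

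**Closing the iteration and the $L^\infty$ bound.** Starting from $u^+\in L^{p_s^*}$ with $\|u^+\|_{p_s^*}\le C_0$, set $\beta_0 = p_s^*/p$ and $\beta_{k+1}=\beta_k\cdot(p_s^*/p)$, so $\beta_k = (p_s^*/p)^{k+1}\to\infty$. The recursion gives $\|u^+\|_{\beta_k p} \le \prod_{j} (C\beta_j^{p-1})^{1/(\beta_j p)}\cdot(\text{bounded})$, and since $\sum_j \frac{\log(C\beta_j)}{\beta_j p}<\infty$ (because $\beta_j$ grows geometrically while $\log\beta_j$ grows only linearly in $j$), the product converges to a finite constant depending only on $N,s,p,f,g,a_1$. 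This proves $\|u_a\|_r\le C(r)$ for every $r\in[p_s^*,\infty)$ and, passing to the limit $r\to\infty$ in the standard way (or running the iteration to its fixed point), $\|u_a\|_\infty\le C(\infty)$, all uniformly in $a\in(0,a_1)$. I expect the main obstacle to be \textbf{bookkeeping}: making every constant that appears in the truncation estimates and Hölder splittings visibly independent of $a$ (this hinges entirely on the uniform $\Dsp$-bound and on the uniform form of the growth estimate above) and tracking the $\beta$-dependence sharply enough that the infinite product converges — the technical heart is the convexity/algebraic inequality for the fractional operator applied to the truncated test function, for which I would cite the relevant lemma from \cite{Brasco2014_fractional_Cheeger} or reprove it in the Appendix.
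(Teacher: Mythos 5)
Your overall strategy (Moser iteration driven by the uniform $\Dsp$-bound from Theorem~\ref{Existence and uniform boundedness} and the growth condition \ref{f1 new}) is the same as the paper's, but the iteration as you describe it does not close: the absorption step fails for large $\beta$. After the algebraic inequality and the Sobolev embedding, your inequality has the shape
\begin{equation*}
c\,\beta^{-p}\,\norm{w}_{p_s^*}^p \;\le\; \varepsilon\,\norm{g}_{\infty}\,\norm{u^+}_{p_s^*}^{p_s^*-p}\,\norm{w}_{p_s^*}^p \;+\; (\text{lower order}),
\end{equation*}
where the factor $\beta^{-p}$ (or $\beta^{1-p}$ in your normalization) comes from the constant $C(\beta,p)=\big(\tfrac{p}{\beta+p-1}\big)^p$ in the convexity inequality \ref{i2} and cannot be removed. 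Absorbing the first right-hand term into the left therefore requires $\varepsilon \lesssim \beta^{-p}$, so no single $\varepsilon$ chosen ``depending only on $C_0,\norm{g}_\infty$, not on $\beta$'' works along the whole sequence $\beta_k\to\infty$. Nor can you let $\varepsilon=\varepsilon(\beta_k)\to 0$ along the iteration: \ref{f1 new} gives no rate, so $C_{\varepsilon}'$ may blow up arbitrarily fast as $\varepsilon\to0$ and the infinite product need not converge. This is exactly why the paper's proof is split into two steps: the $\varepsilon$-absorption is used \emph{once}, at the fixed base level $\sigma=p_s^*$, where the competing constant $\big(\tfrac{p_s^*+p-1}{p}\big)^p$ is a fixed number; for every higher level $\sigma>p_s^*$ the critical term $\int g\,(u_a^+)^{p_s^*-1+\sigma}$ is not absorbed at all, but is controlled via Young's inequality by $1+\int (u_a^+)^{p_s^*+\sigma-1}$, which is precisely the output of the previous iteration step because the exponents $\sigma_{j+1}=1+\tfrac{p_s^*}{p}(\sigma_j-1)$ satisfy $p_s^*+\sigma_{j+1}-1=\tfrac{(\sigma_j+p-1)p_s^*}{p}$. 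Your scheme needs this (or an equivalent device) to be correct; as written, the recursive inequality is not established for any $\beta$ beyond the first.

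Two smaller points. First, your reduction to $u^+$ rests on the unproved assertion that $u_a\ge -1$. This is in fact true and cheap to prove: test the equation with $-(u_a+1)^-$, which belongs to $\Dsp$ by \eqref{dsp} since $0\le (u_a+1)^-\le u_a^-$ and the map $t\mapsto (t+1)^-$ is $1$-Lipschitz; the right-hand side vanishes because $f_a\equiv 0$ on $(-\infty,-1]$, and \ref{i3} with $\beta=1$ forces $(u_a+1)^-\equiv 0$. You must supply this argument (the paper instead runs the full iteration separately on $u_a^-$ using \ref{i4}, which is more work but avoids the claim). Second, the displayed recursion $\norm{w_{\beta,T}}_{p_s^*}^p\le C\beta^{p-1}\big(1+\norm{u^+}_{\beta p_s^*}^{\beta p}\big)$ is circular as written (the right-hand norm is the quantity being estimated); your parenthetical correction to $\norm{u^+}_{\beta p}^{\beta p}$ is what you mean, but note that the zero-order term actually lands at the exponent $p\beta-p+1$ rather than $p\beta$, so one more interpolation is needed even there.
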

\begin{proof}
  \textbf{Uniform bound of the positive part:} For $M > 0$, define $(u_a^+)_M= \min\{(u_a)^+, M\}$ for every $a \in (0, a_1)$. Clearly $(u_a^+)_M \ge 0$ and $(u_a^+)_M \in L^{\infty}(\R^N) \cap \Dsp$. Fixed $\sigma\geq1$, define $\phi=((u_a^+)_M)^{\sigma}$. Now we claim that $\phi \in \Dsp$. First, we recall the following inequality from \cite[(2.4), Page 1359]{Ims}: for any $\alpha,\beta \in \mathbb{R}$ and $\sigma \geq 1$, we have
  \begin{align}\label{inequality-squassina}
    \left| |\alpha|^{\sigma-1} \alpha - |\beta|^{\sigma-1} \beta  \right| \leq \sigma \left( |\alpha|^{\sigma-1} + |\beta|^{\sigma-1} \right)|\alpha-\beta|. \tag{A}
  \end{align}
Taking $\alpha= (u_a^+)_M(x)$ and $\beta=(u_a^+)_M(y)$ into \eqref{inequality-squassina} and using the fact that $0 \leq (u_a^+)_M \in L^\infty(\R^N)$, we get
\begin{align*}
    \left| \phi(x) - \phi(y)  \right| &\leq \sigma \left( |(u_a^+)_M(x)|^{\sigma-1} + |(u_a^+)_M(y)|^{\sigma-1} \right) \left|(u_a^+)_M(x)-(u_a^+)_M(y)\right|\\
    & \leq 2 \sigma \|(u_a^+)_M\|_{\infty}^{\sigma-1} \left|(u_a^+)_M(x)-(u_a^+)_M(y)\right|.
\end{align*}
Since $(u_a^+)_M \in \Dsp$, we have $\norm{(u_a^+)_M}_{s,p} < \infty$ and which further implies $\norm{\phi}_{s,p}< \infty$ i.e.,
\begin{align*}
    \norm{\phi}_{s,p}^{p} = & \iint_{\R^{2N}} \frac{ \abs{\phi(x) - \phi(y)}^{p} }{\abs{x-y}^{N+sp}} \,\dxy \leq \left(2 \sigma \|(u_a^+)_M\|_{\infty}^{\sigma-1} \right)^p \iint_{\R^{2N}} \frac{ \abs{(u_a^+)_M(x) - (u_a^+)_M(y)}^{p} }{\abs{x-y}^{N+sp}} \,\dxy < \infty.
\end{align*} 
Also, $(u_a^+)_M \in \Dsp \cap L^\infty(\R^N)$ implies $(u_a^+)_M \in L^{p_s^*}(\R^N) \cap L^\infty(\R^N)$, which further implies $(u_a^+)_M \in L^r(\R^N)$ for all $r \in [p_s^*,\infty]$. Hence, we get $\phi = (u_a^+)_M^\sigma \in L^{p_s^*}(\R^N)$. By definition of $\Dsp$ given in \eqref{dsp}, we conclude that $\phi \in \Dsp$.

By Theorem \ref{Existence and uniform boundedness}, $u_a$ is a weak solution of \eqref{Main problem}. Taking $\phi$ as a test function, we write 
   \begin{align*}
       \iint_{\R^{2N}} \frac{\Phi\big(u_a(x)-u_a(y)\big)\big(\phi(x) - \phi(y)\big)}{|x-y|^{N+sp}}\, \dxy =  \int_{\R^N } g(x) f_a(u_a) \phi(x) \, \dx.
   \end{align*}
   Using \ref{i2}, Remark \ref{pm part}, and $\Dsp \hookrightarrow L^{p^*_s}(\R^N)$, we have the following lower bound of the left hand side: 
   \begin{align*}
       & \iint_{\R^{2N}} \frac{\Phi\big(u_a(x)-u_a(y)\big)\big(\phi(x) - \phi(y)\big)}{|x-y|^{N+sp}}\, \dxy \\
       &~~\ge \frac{ p^p}{(\sigma + p -1)^p} 
       \iint_{\R^{2N}} \frac{\left| \big((u_a^+)_M(x)\big)^{\frac{\sigma+p-1}{p}}-\big((u_a^+)_M(y)\big)^{\frac{\sigma+p-1}{p}} \right|^p}{\abs{x-y}^{N+sp}} \, \dxy \\
       &~~~\ge \frac{C(N,s,p) p^p}{(\sigma+p-1)^p}
       \left(\int_{\R^N}\left( \big((u_a^+)_M(x)\big)^{\frac{\sigma+p-1}{p}} \right)^{p^*_{s}}\,\dx\right)^{\frac{p}{p^*_{s}}}.
   \end{align*}
   Hence we get
   \begin{align*}
       \frac{C(N,s,p) p^p}{(\sigma+p-1)^p}
       \left(\int_{\R^N}\left(\big((u_a^+)_M(x)\big)^{\frac{\sigma+p-1}{p}} \right)^{p^*_{s}}\,\dx\right)^{\frac{p}{p^*_{s}}} \le \int_{\R^N } g(x) \abs{f_a(u_a)} (u_a^+(x))^{ \sigma } \, \dx.
   \end{align*}
   Taking the limit as $M \ra \infty$ and using the monotone convergence theorem, we get 
   \begin{align}\label{3.8}
       \frac{C(N,s,p) p^p}{(\sigma+p-1)^p}
       \left(\int_{\R^N}\left( (u_a^+(x))^{\frac{\sigma+p-1}{p}} \right)^{p^*_s}\,\dx\right)^{\frac{p}{p^*_s}} \le \int_{\R^N } g(x) \abs{f_a(u_a)} (u_a^+(x))^{ \sigma } \, \dx.
   \end{align}
 \noi \textbf{Step 1:}  In this step, for $\sigma = p^*_s$, we show that the set $\{ (u_a^+)^{\sigma + p -1} : a\in (0, a_1) \}$ is uniformly bounded on $L^{\frac{\sigma}{p}}(\R^N)$. From \ref{f1 new} and \ref{f1}, for any $\epsilon > 0$, there exists $C=C(\ep, a_1)$ such that $\abs{f_a(u_a)} \le C + \epsilon \abs{u_a}^{p^*_s -1}$ for all $ a \in (0, a_1)$. We can also write the previous inequality as follows: 
 \begin{align}\label{inq-e0}
     \abs{f_a(u_a)} \le C + \epsilon (u_a^+ + u_a^-)^{p^*_s -1} = C + \epsilon  \left( (u_a^+)^{p^*_s -1} + (u_a^-)^{p^*_s -1} \right).\tag{e0}
 \end{align}
Since $u_a^+$  and $u_a^-$ have disjoint supports, we estimate the right hand side of \eqref{3.8} using \eqref{inq-e0} as follows:
 \begin{align}\label{inq-e1}
     \int_{\R^N } g(x) \abs{f_a(u_a)} (u_a^+(x))^{ p^*_s } \, \dx \le C \int_{\R^N} g(x) (u_a^+(x))^{p^*_s} \, \dx + \ep   \int_{\R^N} g(x) (u_a^+(x))^{2p^*_s -1} \, \dx. \tag{e1}
 \end{align}
 First we estimate the first integral on the right hand side of \eqref{inq-e1}. We use the continuous embedding $\Dsp \hookrightarrow L^{p^*_s}(\R^N)$, the uniform boundedness of $\{ u_a : a \in (0, a_1)\}$ in $\Dsp$ (Theorem \ref{Existence and uniform boundedness}), and Remark \ref{pm part} to deduce that 
 \begin{align}\label{inq-e2}
     \int_{\R^N} g(x) (u_a^+(x))^{p^*_s} \, \dx \le C(N,s,p)\norm{g}_{\infty} \norm{u_a^+}_{s,p}^{p^*_s} \le C(N,s,p,f, g, a_1), \quad \forall \, a \in (0, a_1).\tag{e2}
 \end{align}
 In order to estimate the second integral on the right hand side of \eqref{inq-e1}, we use the H\"{o}lder's inequality with the conjugate pair $(\frac{p^*_s}{p^*_s -p}, \frac{p^*_s}{p})$ and we get
 \begin{align*}
     \int_{\R^N} g(x) (u_a^+(x))^{2p^*_s -1} \, \dx & \le \norm{g}_{\infty} \int_{\R^N} (u_a^+(x))^{p^*_s - p} (u_a^+(x))^{p^*_s+p-1} \, \dx \notag\\
     & \le \norm{g}_{\infty} \left( \int_{\R^N} (u_a^+(x))^{p^*_s} \, \dx \right)^{\frac{p^*_s - p}{p^*_s}} \left( \int_{\R^N} (u_a^+(x))^{\frac{p^*_s+p-1}{p}p^*_{s}} \, \dx \right)^{\frac{p}{p^*_s}}. 
 \end{align*}
 Now $\Dsp \hookrightarrow L^{p^*_s}(\R^N)$ and again the uniform boundedness of $\{ u_a \}$  and Remark \ref{pm part} yield
 \begin{align}\label{inq-e3}
     \int_{\R^N} g(x) (u_a^+(x))^{2p^*_s -1} \, \dx \le C(N,s,p,f,g,a_1) \left( \int_{\R^N} (u_a^+(x))^{\frac{p^*_s+p-1}{p}p^*_{s}} \, \dx \right)^{\frac{p}{p^*_s}}.\tag{e3}
 \end{align}
 Combining \eqref{3.8}, \eqref{inq-e1}, \eqref{inq-e2} and \eqref{inq-e3}, we obtain
\begin{align}\label{3.9}
\left(\int_{\R^N} (u_a^+(x))^{\frac{p^*_s+p-1}{p}p^*_{s}}  \,\dx\right)^{\frac{p}{p^*_s}}  \le \left( \frac{p^*_s+p-1}{p} \right)^p  
\left( C_1 + C_2 \ep \left( \int_{\R^N} (u_a^+(x))^{\frac{p^*_s+p-1}{p}p^*_{s}} \, \dx \right)^{\frac{p}{p^*_s}} \right),
\end{align}
where $C_1 = C_1(N,s,p,\ep,f,g, a_1)$ and $C_2 = C_2(N,s,p,f,g,a_1)$. Now we choose $\ep>0$ such that 
\begin{align*}
   \ep \left( \frac{p^*_s+p-1}{p} \right)^p C_2 < \frac{1}{2}.
\end{align*}
 Using the above choice of $\epsilon>0$, the inequality \eqref{3.9} reduces to
 \begin{align*}
     \left(\int_{\R^N} (u_a^+(x))^{\frac{p^*_s+p-1}{p}p^*_{s}} \,\dx\right)^{\frac{p}{p^*_s}} \le \left( \frac{p^*_s+p-1}{p} \right)^p C_1(N,s,p,f,g, a_1), \quad \forall \, a \in (0,a_1).
 \end{align*}
 Thus the set $\{ (u_a^+)^{p^*_s + p -1} : a\in (0, a_1) \}$ is uniformly bounded on $L^{\frac{p^*_s}{p}}(\R^N)$. \\
 \noi \textbf{Step 2:} In this step, we consider $\sigma > p^*_s$. Using $\abs{f_a(t)} \le C(f,a_1) (1 + \abs{t}^{p^*_s -1}); t \in \R$ (by \eqref{2.3}) we have 
 \begin{align*}
     \abs{f_a(u_a)} \le C(f, a_1) \left(1 +\left( (u_a^+)^{p^*_s -1} + (u_a^-)^{p^*_s -1} \right) \right),
 \end{align*}
and hence \eqref{3.8} yields  
 \begin{align}\label{3.10}
     \left(\int_{\R^N} (u_a^+(x))^{\frac{\sigma+p-1}{p}p^*_{s}} \,\dx\right)^{\frac{p}{p^*_s}}
     \le \frac{(\sigma+p-1)^p}{C(N,s,p) p^p} C(f,a_1) \int_{\R^N} g(x) \left( 1 + (u_a^+(x))^{p^*_s -1} \right) (u_a^+(x))^{\sigma} \, \dx.
 \end{align}
Set $m_1 : = \frac{p^*_s(p^*_s -1)}{\sigma -1}$ and $m_2 := \sigma - m_1$. Notice that $m_1 < p^*_s$. Applying Young's inequality with the conjugate pair $(\frac{p^*_s}{m_1}, \frac{p^*_s}{p^*_s - m_1})$ we get
 \begin{align}
     (u_a^+(x))^{\sigma} =  (u_a^+(x))^{m_1} (u_a^+(x))^{m_2} \le \frac{m_1}{p^*_s} (u_a^+(x))^{p^*_s} + \frac{p^*_s-m_1}{p^*_s} (u_a^+(x))^{\frac{p^*_s m_2}{p^*_s -m_1}},
 \end{align}
where we further observe that 
\begin{align}
    m_1 = \frac{p^*_s(p^*_s -1)}{\sigma -1} \Longleftrightarrow \frac{p^*_s m_2}{p^*_s -m_1} = p^*_s + \sigma -1.
\end{align}
 Therefore, using $\Dsp \hookrightarrow L^{p^*_s}(\R^N)$ and the uniform boundedness of $\{ u_a \}$,
 \begin{align*}
     \int_{\R^N } g(x) (u_a^+(x))^{\sigma} \, \dx & \le \norm{g}_{\infty} \left(\int_{ \R^N } (u_a^+(x))^{p^*_s} \, \dx + \int_{\R^N} (u_a^+(x))^{p^*_s + \sigma -1} \, \dx  \right) \\
     & \le C(N,s,p,f,g, a_1) \left( 1 +  \int_{\R^N} (u_a^+(x))^{p^*_s + \sigma -1} \, \dx \right).
 \end{align*}
Hence for every $\sigma > p^*_s$, \eqref{3.10} yields
 \begin{align*}
     \left(\int_{\R^N} (u_a^+(x))^{\frac{\sigma+p-1}{p}p^*_s} \,\dx\right)^{\frac{p}{p^*_s}} \le  \left( \frac{\sigma + p -1}{p}\right)^p C \left( 1 +  \int_{\R^N} (u_a^+(x))^{p^*_s + \sigma -1} \, \dx \right),
 \end{align*}
 where $C=C(N,s,p,f,g, a_1)$. From the above inequality, we get 
    \begin{align}\label{3.13}
        \left(1+\int_{\R^N} (u_a^+(x))^{\frac{\sigma+p-1}{p}p^*_{s}} \,\dx\right)^{\frac{p}{p^*_{s}}} &\le 1 + \left(\int_{\R^N } (u_a^+(x))^{\frac{\sigma+p-1}{p}p^*_{s}} \,\dx\right)^{\frac{p}{p^*_{s}}} \no \\
        &\le 1 + C \left( \frac{\sigma + p -1}{p}\right)^{p} \left(1 + \int_{\R^N} (u_a^+(x))^{p^*_s + \sigma -1} \, \dx \right) \no \\
        &\le \left( 1 + C\left( \sigma + p -1 \right)^{p} \right) \left(1 + \int_{\R^N} (u_a^+(x))^{p^*_s + \sigma -1} \, \dx \right) \no \\
        &\le \widetilde{C} \left( \sigma + p -1 \right)^{p} \left(1 + \int_{\R^N} (u_a^+(x))^{p^*_s+ \sigma -1} \, \dx \right),
    \end{align}
    where $\widetilde{C} = \widetilde{C}(N,s,p,f,g, a_1) = C + (p-1)^{-p}$.
    We consider the sequence $\{ \sigma_j \}$ such that 
    \begin{align*}
        \sigma_1 = p^*_s, \sigma_2 = 1 + \frac{p^*_s}{p}(\sigma_1 - 1), \cdots , \sigma_{j+1} = 1 + \frac{p^*_s}{p}(\sigma_j - 1). 
    \end{align*}
    Notice that $ p^*_s+ \sigma_{j+1} -1 = \frac{ \sigma_j + p -1}{p} p^*_s$ and $ \sigma_{j+1} - 1 = (\frac{p^*_s}{p})^j (\sigma_1 - 1)$. Hence using \eqref{3.13},
    we have 
    \begin{align*}
        \left(1+\int_{\R^N} (u_a^+(x))^{\frac{\sigma_{j+1}+p-1}{p}p^*_{s}} \,\dx\right)^{\frac{p}{p^*_{s}(\sigma_{j+1} - 1)}} \le & \left( \widetilde{C} \left( \sigma_{j+1} + p -1 \right)^{p} \right)^{\frac{1}{ \sigma_{j+1} -1 }} \\
        &\left( 1+\int_{\R^N} (u_a^+(x))^{\frac{\sigma_{j}+p-1}{p}p^*_{s}} \,\dx \right)^{\frac{p}{p^*_{s}(\sigma_{j} - 1)}}.
    \end{align*}
    Set $D_j := \left( 1+\int_{\R^N} (u_a^+(x))^{\frac{\sigma_{j}+p-1}{p}p^*_{s}} \,\dx \right)^{\frac{p}{p^*_{s}(\sigma_{j} - 1)}}$. Set $\eta_{j} = \sigma_{j}+p-1$. We iterate the above inequality to get 
    \begin{align*}
        D_{j+1} \le \left( {\left(\,\widetilde{C}\,\right)}^{\sum_{k=2}^{j+1} \frac{1} {\sigma_k - 1} } \left( \prod_{k = 2}^{j+1} \eta_{k}^{\frac{1}{\eta_k - p}} \right)^{p} \right) D_1.
    \end{align*}
    In view of Step 1, $D_1 \le C$ for some $C=C(N,s,p,f,g, a_1)$. Moreover, 
    \begin{align*}
        D_{j+1} \ge \left( \left(\int_{\R^N} (u_a^+(x))^{\frac{\sigma_{j+1}+p-1}{p}p^*_{s}} \, \dx \right)^{ \frac{p}{(\sigma_{j+1}+p-1)p^*_s}  } \right)^{\frac{\sigma_{j+1}+p-1}{\sigma_{j+1} - 1}} = \norm{u_a^+}_{\frac{ \sigma_{j+1}+p-1}{p}p^*_{s}}^{\frac{\eta_{j+1}}{\eta_{j+1}-p}}.
    \end{align*}
   Therefore, 
   \begin{align}\label{3.14}
       \norm{u_a^+}_{\frac{ \sigma_{j+1}+p-1}{p}p^*_{s}}^{\frac{\eta_{j+1}}{\eta_{j+1}-p}} \le \left( {\left(\,\widetilde{C}\,\right)}^{\sum_{k=2}^{j+1} \frac{1} {\eta_k - p} } \left( \prod_{k = 2}^{j+1} \eta_{k}^{\frac{1}{\eta_k - p}} \right)^{p} \right) C, \quad \forall \, a \in (0, a_1),
   \end{align}
where $C, \widetilde{C}$ are independent of $a$. Since, $\sigma_j \ra \infty$, as $j \ra \infty$, by interpolation argument we have $u_a^+ \in L^r(\R^N)$ for every $r \in [p^*_s, \infty)$, and moreover from \eqref{3.14}, $\norm{u_a^+}_{r} \le C$ for all $a \in (0, a_1)$ and $C=C(r,N,s,p,f,g,a_1)$. Further, 
\begin{align*}
  & \sum_{k=2}^\infty\frac{1}{\eta_k-p}= \frac{1}{(\eta_1-p)} \sum_{k=2}^\infty \left(\frac{p}{p_s^*} \right)^{k-1} =\frac{p}{(p_s^*-1)(p_s^*-p)} \text{ and } \\
  & \prod_{k=2}^{\infty}\eta_{k}^{\frac{1}{\eta_k - p}} = \exp\left( \sum_{k=2}^\infty \frac{\log(\eta_{k})}{\eta_{k} - p} \right) = \exp \left(\frac{p}{(p^*_{s} - p)^2} \log \left(p \left( \frac{p^*_{s}(p^*_{s}-p)}{p}  \right)^{p^*_{s}} \right) \right).
\end{align*}
Also observe that $\frac{\eta_{j+1}}{\eta_{j+1}-p} \ra 1$ as $j \ra \infty$. Therefore, taking the limit as $j \ra \infty$ in \eqref{3.14} gives $u_a^+ \in L^{\infty}(\R^N)$ and $\norm{u_a^+}_{\infty} \le C(N,s,p,f,g,a_1)$ for all $a \in (0,a_1)$. 

\noi  \textbf{Uniform bound of the negative part:} For $M > 0$, define $(u_a^-)_M= \min\{u_a^-, M\}$ for every $a \in (0, a_1)$. Clearly $(u_a^-)_M \ge 0$ and $(u_a^-)_M \in L^{\infty}(\R^N) \cap \Dsp$. For $\sigma\geq1$, we take $\phi=- ((u_a^-)_M)^{\sigma} \in \Dsp$ as a test function to get 
\begin{align*}
       \iint_{\R^{2N}} \frac{\Phi\big(u_a(x)-u_a(y)\big)\big(\phi(x) - \phi(y)\big)}{|x-y|^{N+sp}}\, \dxy =  \int_{\R^N } g(x) f_a(u_a) \phi(x) \, \dx.
   \end{align*}
Using \ref{i4}, Remark \ref{pm part}, and $\Dsp \hookrightarrow L^{p^*_s}(\R^N)$, we have 
   \begin{align*}
       & \iint_{\R^{2N}} \frac{\Phi\big(u_a(x)-u_a(y) \big) \big( (u_a^-)_M(y))^{\sigma} - ((u_a^-)_M(x))^{\sigma}\big)}{|x-y|^{N+sp}}\, \dxy \\
       &~~\ge \frac{ p^p}{(\sigma + p -1)^p} 
       \iint_{\R^{2N}} \frac{\left| ((u_a^-)_M(x))^{\frac{\sigma+p-1}{p}}-((u_a^-)_M(y))^{\frac{\sigma+p-1}{p}} \right|^p}{\abs{x-y}^{N+sp}} \, \dxy \\
       &~~~\ge \frac{C(N,s,p) p^p}{(\sigma+p-1)^p}
       \left(\int_{\R^N}\left((u_a^-)_M(x))^{\frac{\sigma+p-1}{p}} \right)^{p^*_{s}}\,\dx\right)^{\frac{p}{p^*_{s}}}.
   \end{align*}
Hence we get
   \begin{align*}
       \frac{C(N,s,p) p^p}{(\sigma+p-1)^p}
       \left(\int_{\R^N}\left((u_a^-)_M(x)^{\frac{\sigma+p-1}{p}} \right)^{p^*_{s}}\,\dx\right)^{\frac{p}{p^*_{s}}} \le \int_{\R^N } g(x) \abs{f_a(u_a)} (u_a^-(x))^{ \sigma } \, \dx.
   \end{align*}
   Taking the limit as $M \ra \infty$ and using the monotone convergence theorem, we get 
   \begin{align}\label{3.15}
       \frac{C(N,s,p) p^p}{(\sigma+p-1)^p}
       \left(\int_{\R^N}\left( (u_a^-(x))^{\frac{\sigma+p-1}{p}} \right)^{p^*_s}\,\dx\right)^{\frac{p}{p^*_s}} \le \int_{\R^N } g(x) \abs{f_a(u_a)} (u_a^-(x))^{ \sigma } \, \dx.
   \end{align}
   Now, using \eqref{3.15} and following the same procedure as above, we can conclude that for every $r \in [p^*_s, \infty]$, $u_a^- \in L^r(\R^N)$ and   $\norm{u_a^-}_{r} \le C(r,N,s,p,f,g,a_1)$ for all $a \in (0, a_1)$. 
   
   \noi Further, for $r \in [p^*_s, \infty]$, 
   \begin{align*}
       \norm{u_a}_r = \norm{u_a^+ - u_a^-}_r \le  \norm{u_a^+}_r + \norm{u_a^-}_r \le C(r,N,s,p,f,g,a_1), \quad \forall \, a \in (0, a_1).
   \end{align*}
   Thus \eqref{3.7 unifrom bound} holds. 
\end{proof}
Now, we prove that the solution $\{ u_a \}$ is uniformly bounded from below over several spaces. 
\begin{proposition}\label{uniform lower bound} 
Let $p \in (1, \frac{N}{s})$ and $f,g,a_1$ be given in Proposition \ref{regularity}. Then the following hold:
\begin{enumerate}
    \item[\rm{(i)}] There exists $C_1>0$ such that $\|u_a\|_{s,p} \geq C_1$, for all $a \in (0,a_1)$.
    \item[\rm{(ii)}] There exist $a_2 \in (0,a_1)$ and $C_2 >0$ such that $\|u_a\|_{\infty} \geq C_2$, for all $a \in (0,a_2)$.
\end{enumerate}
\end{proposition}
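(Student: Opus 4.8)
The plan for (i) is to combine the uniform lower bound on the mountain pass level with the uniform upper bound on the norms. By \eqref{3.1}, $I_a(u_a)=c_a\ge\delta>0$ for every $a\in(0,a_1)$, while Theorem \ref{Existence and uniform boundedness} gives $\|u_a\|_{s,p}\le C$ with $C$ independent of $a$. Setting $t_a:=\|u_a\|_{s,p}$ and using $|K_a(u_a)|\le\int_{\R^N}g|F_a(u_a)|\,\dx$, the bound \eqref{2.2} on $F_a$, and the embeddings $\Dsp\hookrightarrow L^q(\R^N,g)$ for $q\in[1,p^*_s)$ (Proposition \ref{compactness weight g}, applied with $q=1,p,\gamma$; these are exactly the embeddings used in the proof of Lemma \ref{MP1}), one obtains, for all $a\in(0,a_1)$,
\begin{align*}
0<\delta\le I_a(u_a)=\tfrac1p t_a^p-K_a(u_a)\le c\,t_a^p+c\,t_a^\gamma+c\,a_1\,t_a,
\end{align*}
with $c>0$ independent of $a$. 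The right-hand side is continuous in $t_a\in(0,C]$ and vanishes as $t_a\to0^+$, so it can stay $\ge\delta$ only for $t_a$ bounded away from $0$; this produces a constant $C_1>0$ with $\|u_a\|_{s,p}\ge C_1$ for all $a\in(0,a_1)$. (Equivalently: along any sequence $a_n\in(0,a_1)$ with $\|u_{a_n}\|_{s,p}\to0$, the displayed inequality would force $\delta\le0$, a contradiction.)

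For (ii) I would start from the critical-point identity \eqref{3.5}, namely $\|u_a\|_{s,p}^p=\int_{\R^N}g(x)f_a(u_a)u_a\,\dx$, and estimate the right-hand side by splitting according to the sign of $u_a$. On $\{u_a\ge0\}$ we have $f_a(u_a)u_a=f(u_a)u_a-au_a\le f(u_a)u_a$, and the subcritical growth of $f$ together with its continuity gives $f(t)\le C(1+t^{\gamma-1})$ for all $t\ge0$ with $C$ independent of $a$, so this part is at most $C\|g\|_1\big(\|u_a^+\|_\infty+\|u_a^+\|_\infty^\gamma\big)$. On $\{u_a<0\}$ only $\{-1\le u_a<0\}$ contributes, since $f_a\equiv0$ on $(-\infty,-1]$; and there $0\le f_a(u_a)u_a=-a(u_a+1)u_a\le a/4$ pointwise (the maximum of $-(\tau+1)\tau$ on $[-1,0]$ equals $1/4$), so this part is at most $\tfrac a4\|g\|_1$. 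Now Proposition \ref{regularity} gives $\|u_a^+\|_\infty\le\|u_a\|_\infty\le C$ uniformly in $a\in(0,a_1)$, whence $\|u_a^+\|_\infty+\|u_a^+\|_\infty^\gamma\le(1+C^{\gamma-1})\|u_a\|_\infty$; combining with (i),
\begin{align*}
C_1^p\le\|u_a\|_{s,p}^p\le\widehat C\,\|u_a\|_\infty+\tfrac a4\|g\|_1,\qquad\widehat C=\widehat C(N,s,p,f,g,a_1).
\end{align*}
Choosing $a_2:=\min\{a_1,\,2C_1^p/\|g\|_1\}$, the last term is at most $C_1^p/2$ for every $a\in(0,a_2)$, and hence $\|u_a\|_\infty\ge C_1^p/(2\widehat C)=:C_2>0$.

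I do not expect a genuine obstacle: both parts amount to bookkeeping with the uniform estimates already established in Theorem \ref{Existence and uniform boundedness} (uniform $\Dsp$ bound and positivity of the mountain pass level $c_a\ge\delta$) and in Proposition \ref{regularity} (uniform $L^\infty$ bound). The one point that needs care — and precisely the reason the restriction to small $a$ enters part (ii) — is the region $\{-1\le u_a<0\}$ where the reaction term $g f_a(u_a)$ is negative: it contributes the extra term $\tfrac a4\|g\|_1$ to the estimate for $\|u_a\|_{s,p}^p$, which can be absorbed into the lower bound $C_1^p$ from (i) only after shrinking $a$.
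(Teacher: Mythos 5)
Your proof is correct and follows essentially the same route as the paper: part (i) combines the uniform mountain pass level $c_a \ge \delta$ with an upper bound on $I_a(u_a)$ that vanishes as $\|u_a\|_{s,p} \to 0$, and part (ii) rests on the critical point identity $\|u_a\|_{s,p}^p = \int_{\R^N} g\,f_a(u_a)u_a\,\dx$ together with a uniform lower bound on the left side and an $L^\infty$-controlled upper bound on the right. The only (harmless) divergence is where the smallness of $a$ enters part (ii): the paper re-derives a lower bound $\|u_a\|_{s,p}^p/p \ge \delta - aC_3$ from the energy level and shrinks $a$ to keep it positive, whereas you import $\|u_a\|_{s,p} \ge C_1$ from (i) and shrink $a$ to absorb the explicit contribution $\tfrac{a}{4}\|g\|_1$ of the region $\{-1 \le u_a < 0\}$; the paper also bypasses your sign-splitting by estimating $|f_a(t)| \le C(1+|t|^{p_s^*-1})$ directly.
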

\begin{proof}
(i) We notice that $F_a(t) \geq -a \abs{t}$ for all $t \in \R$. For $\delta$ as given in Theorem \ref{Existence and uniform boundedness}, from \eqref{3.1} we write $I_a(u_a) \geq \delta$, for all $a \in (0,a_1)$. Using Proposition \ref{compactness weight g}, we have 
\begin{align*}
  \delta \leq I_a(u_a)  \leq \frac{1}{p}\|u_a\|_{s,p}^p + a \int_{\R^N} g(x) |u_a|\,\dx \leq \frac{1}{p}\|u_a\|_{s,p}^p + a_1 C \|u_a\|_{s,p},
\end{align*}
where $C=C(N,s,p,g)$. Thus from the above inequality, there exists $C_1=C_1(N,s,p,g,a_1,\delta)$ such that $\|u_a\|_{s,p} \geq C_1$, for all $a \in (0,a_1)$.\\
(ii) For $\delta$ as given in Theorem \ref{Existence and uniform boundedness}, from \eqref{3.1} we write $I_a(u_a) \geq \delta$ for all $a \in (0,a_1)$. Further, we have
\begin{align*}
    \frac{\|u_a\|_{s,p}^p}{p} = I_a(u_a) + \int_{\R^N} g(x) F_a(u_a)\,\dx &\geq \delta - a \int_{\R^N} g(x) \abs{u_a}\,\dx.
\end{align*}
For every $a \in (0,a_1)$, using the continuous embedding $\Dsp \hookrightarrow L^{1}(\R^N, g)$ (Proposition \ref{compactness weight g}) with embedding constant $C_2=C_2(N,s,p,g)$ and the uniform boundedness of $\{u_a\}$ in $\Dsp$ (Theorem \ref{Existence and uniform boundedness}), we obtain
\begin{align*}
    \frac{\|u_a\|_{s,p}^p}{p}\geq \delta - a C_2 \|u_a\|_{s,p} \geq \delta - a C_2 C = \delta - aC_3,
\end{align*}
where $C_3=C_2C$. Now if we  choose $a_2$ such that $0<a_2< \min\{ \frac{\delta}{C_3},a_1\}$, then
\begin{align*}
    \frac{\|u_a\|_{s,p}^p}{p}\geq \delta_0 := \delta - a_2 C_3 >0, \quad \forall \, a \in (0,a_2).
\end{align*}
Therefore, using $|f_a(u_a)| \leq C(f,a_2)(1+ |u_a|^{p_s^*-1})$ and that $u_a \in L^\infty(\R^N)$, we obtain the following estimates:
\begin{align} \label{3.16}
    \delta_0 \leq \frac{\|u_a\|_{s,p}^p}{p} = \frac{1}{p} \int_{\R^N} g(x)|f_a(u_a)u_a|\,\dx \leq C(f,a_2) \|g\|_1 (\|u_a\|_{\infty} + \|u_a\|_{\infty}^{p_s^*}).
\end{align}
Thus we can conclude from \eqref{3.16} that there exists $C_2=C_2(N,s,p,f,g,a_2,\delta)$ such that $\|u_a\|_{\infty} \geq C_2$ for all $a \in (0,a_2)$.
\end{proof}

\begin{proposition}\label{properties of limit function}
Let $p \in (1, \frac{N}{s})$ and $f,g$ be given in Proposition~\ref{regularity}. Given a sequence $a_n \ra 0$, there exists $\tilde{u} \in \Dsp$ such that $u_{a_n} \ra \tilde{u}$ in $\Dsp$ as $n \ra \infty$.  Moreover, $\tilde{u}$ is a weak solution of 
    \begin{align}\label{3.17}
       (-\Delta)_{p}^{s}u =  g(x)f_0(u) \text{ in } \mathbb{R}^{N},
    \end{align} 
     where $f_0(t) = f(t)$ for $t \ge 0$ and $f_0(t) = 0$ for $t \le 0$. Further, $\tilde{u} \in L^r(\R^N)$ for every $r \in [p^*_s, \infty]$. Furthermore, $\tilde{u} \in \mathcal{C}(\R^N)$.
\end{proposition}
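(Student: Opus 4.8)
The plan is to imitate, now along the family $\{u_{a_n}\}$, the compactness argument used to verify the Palais--Smale condition in Proposition \ref{Palais smale I_a}, then to pass to the limit in the weak formulation of \eqref{Main problem} using that $f_{a_n}\to f_0$ uniformly on $\R$, and finally to read off the integrability of $\tilde u$ from Proposition \ref{regularity} and its continuity from interior regularity for $(-\Delta)^s_p$. First I would record the weak limit: by Theorem \ref{Existence and uniform boundedness}, $\{u_{a_n}\}$ is bounded in $\Dsp$ by a constant independent of $n$, so, $\Dsp$ being reflexive, after passing to a subsequence (not relabelled) there is $\tilde u\in\Dsp$ with $u_{a_n}\rightharpoonup\tilde u$ in $\Dsp$; Proposition \ref{compactness weight g}(i) then gives $u_{a_n}\to\tilde u$ in $L^q_{\mathrm{loc}}(\R^N)$ for every $q\in[1,p^*_s)$ and, along a further subsequence, a.e.\ in $\R^N$.

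The crucial step is to upgrade this to strong convergence in $\Dsp$. Testing the identity $I_{a_n}'(u_{a_n})=0$ against $u_{a_n}-\tilde u$ gives
\[
\iint_{\R^{2N}} \frac{\Phi\big(u_{a_n}(x)-u_{a_n}(y)\big)\big((u_{a_n}-\tilde u)(x)-(u_{a_n}-\tilde u)(y)\big)}{|x-y|^{N+sp}}\,\dxy = \int_{\R^N} g(x)\,f_{a_n}(u_{a_n})\,(u_{a_n}-\tilde u)\,\dx .
\]
The right-hand side tends to $0$ by the same reasoning as in Proposition \ref{Palais smale I_a}: the bound $|f_{a_n}(u_{a_n})|\le C(1+|u_{a_n}|^{\gamma-1})$ from \eqref{2.3} is uniform in $a\in(0,a_1)$ with $\gamma\in(p,p^*_s)$, and since $1<p^*_s$ and $\frac{p^*_s}{p^*_s-\gamma+1}<p^*_s$, Proposition \ref{compactness weight g}(ii) yields $u_{a_n}\to\tilde u$ in $L^1(\R^N,g)$ and in $L^{p^*_s/(p^*_s-\gamma+1)}(\R^N,g)$, while $\{|u_{a_n}|^{\gamma-1}\}$ is bounded in $L^{p^*_s/(\gamma-1)}(\R^N)$; H\"older's inequality then closes the estimate. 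Since weak convergence also forces $\iint_{\R^{2N}}\frac{\Phi(\tilde u(x)-\tilde u(y))((u_{a_n}-\tilde u)(x)-(u_{a_n}-\tilde u)(y))}{|x-y|^{N+sp}}\,\dxy\to0$, subtracting and invoking the monotonicity inequality already used in \eqref{2.20} gives $\|u_{a_n}\|_{s,p}\to\|\tilde u\|_{s,p}$, whence $u_{a_n}\to\tilde u$ in $\Dsp$ by uniform convexity.

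Next I would pass to the limit in $\iint_{\R^{2N}}\frac{\Phi(u_{a_n}(x)-u_{a_n}(y))(\phi(x)-\phi(y))}{|x-y|^{N+sp}}\,\dxy=\int_{\R^N} g(x)f_{a_n}(u_{a_n})\phi\,\dx$ for an arbitrary fixed $\phi\in\Dsp$. On the left, strong convergence in $\Dsp$ and continuity of $t\mapsto|t|^{p-2}t$ as a map $L^p(\R^{2N})\to L^{p'}(\R^{2N})$, applied to the Gagliardo difference quotients, give convergence to $\iint_{\R^{2N}}\frac{\Phi(\tilde u(x)-\tilde u(y))(\phi(x)-\phi(y))}{|x-y|^{N+sp}}\,\dxy$. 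On the right, \eqref{1.1} gives $\|f_{a_n}-f_0\|_{L^\infty(\R)}\le a_n\to0$ (with $f_0$ being $f_a$ evaluated at $a=0$), hence $\big|\int_{\R^N} g(x)(f_{a_n}(u_{a_n})-f_0(u_{a_n}))\phi\,\dx\big|\le a_n\|g\|_{p^*_s/(p^*_s-1)}\|\phi\|_{p^*_s}\to0$, while $\int_{\R^N} g(x)(f_0(u_{a_n})-f_0(\tilde u))\phi\,\dx=K_0'(u_{a_n})(\phi)-K_0'(\tilde u)(\phi)\to0$ because $K_0\in\C^1(\Dsp,\R)$ with derivative as in \eqref{2.1} (Proposition \ref{locally lipschitz K_a} applied with $a=0$) and $u_{a_n}\to\tilde u$ in $\Dsp$. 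This identifies $\tilde u$ as a weak solution of \eqref{3.17}.

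Finally, Proposition \ref{regularity} supplies, for each $r\in[p^*_s,\infty]$, a constant $C(r)$ independent of $n$ with $\|u_{a_n}\|_r\le C(r)$; Fatou's lemma (for $r<\infty$) together with the a.e.\ convergence above, and the a.e.\ bound $|\tilde u|\le\limsup_n|u_{a_n}|\le C(\infty)$ (for $r=\infty$), then give $\tilde u\in L^r(\R^N)$ for all $r\in[p^*_s,\infty]$. In particular $\tilde u\in L^\infty(\R^N)$, so $f_0(\tilde u)\in L^\infty(\R^N)$ and the right-hand side $g\,f_0(\tilde u)$ of \eqref{3.17} lies in $L^\infty(\R^N)$; since moreover $\tilde u\in L^\infty(\R^N)\subset L^{p-1}_{sp}(\R^N)$, local boundedness and interior H\"older continuity for the fractional $p$-Laplace equation with bounded right-hand side give $\tilde u\in\C^{0,\alpha}_{\mathrm{loc}}(\R^N)$, in particular $\tilde u\in\mathcal{C}(\R^N)$. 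I expect the strong-convergence step to be the main obstacle: because the functional $I_{a_n}$ itself varies with $n$ one cannot simply invoke a single Palais--Smale property, but the growth bounds \eqref{2.3} and the compact embeddings of Proposition \ref{compactness weight g} are uniform in $a\in(0,a_1)$, so the monotonicity trick behind \eqref{2.20} still closes the argument; the remaining steps are a direct limit passage and a citation of standard interior regularity for $(-\Delta)^s_p$.
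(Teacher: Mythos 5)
Your proposal is correct and follows the same skeleton as the paper's proof: uniform boundedness in $\Dsp$ from Theorem \ref{Existence and uniform boundedness}, strong convergence via the monotonicity/uniform-convexity trick of Proposition \ref{Palais smale I_a}, the splitting $|f_{a_n}(u_{a_n})-f_0(\tilde u)|\le|f_{a_n}(u_{a_n})-f_0(u_{a_n})|+|f_0(u_{a_n})-f_0(\tilde u)|$ with $\|f_{a_n}-f_0\|_\infty\le a_n$ to pass to the limit in the weak formulation, and the interior H\"older regularity results of Brasco--Lindgren--Schikorra ($p\ge2$) and Garain--Lindgren ($1<p<2$) for continuity.

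Two points where you deviate are both acceptable and, if anything, cleaner. First, the paper simply declares $\{u_{a_n}\}$ to be a ``bounded P-S sequence'' and invokes convergence, even though the functional $I_{a_n}$ changes with $n$; you correctly identify this as the delicate point and redo the estimate for the varying functionals, which works because the growth bound \eqref{2.3} and the compact embeddings of Proposition \ref{compactness weight g} are uniform in $a\in(0,a_1)$. Second, for $\tilde u\in L^r(\R^N)$, $r\in[p^*_s,\infty]$, the paper re-runs the Moser iteration of Proposition \ref{regularity} on $\tilde u$ itself, whereas you deduce it from the uniform bounds $\|u_{a_n}\|_r\le C(r)$ via Fatou's lemma and a.e.\ convergence; this is shorter and equally valid. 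The only place you are slightly less detailed than the paper is the continuity step: the regularity theorems are stated for local weak solutions tested against compactly supported $W^{s,p}$ functions on a bounded Lipschitz set, and the paper spends a few lines verifying that $\tilde u\in W^{s,p}_{\mathrm{loc}}\cap L^{p-1}_{sp}(\R^N)$ satisfies the localized weak formulation (via the extension-by-zero argument from \cite{DNPV2012}); you should include that verification, but it is routine given $\tilde u\in\Dsp\cap L^\infty(\R^N)$.
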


\begin{proof}
    Since $a_n \ra 0$ as $n \ra \infty$, there exists $n_1 \in \N$ such that $a_n < a_2$ for all $n \ge n_1$. For each $n \ge n_1$, $u_{a_n} \in \Dsp$ is a critical point of $I_{a_n}$. Moreover, by Theorem \ref{Existence and uniform boundedness}, the following hold up to a subsequence: 
    \begin{align*}
        I_{a_n}(u_{a_n}) \ra c \text{ in } \R \text{ as } n \ra \infty, I'_{a_n}(u_{a_n}) = 0 \text{ and } \norm{u_{a_n}}_{s,p} \le C \quad \forall \, n \ge n_1.
    \end{align*}
    Therefore, $\{ u_{a_n}\}$ is a bounded P-S sequence, and hence up to subsequence $u_{a_n} \ra \tilde{u}$ in $\Dsp$. This implies $u_{a_n} \ra \tilde{u}$ in $L^{p^*_s}(\R^N)$ and up to a further subsequence $u_{a_n}(x) \ra \tilde{u}(x)$ a.e. in $\R^N$. Now for every $\phi \in \Dsp$ with $\phi \ge 0$ we show that 
    \begin{align*}
        \int_{\R^N } g(x) f_{a_n}(u_{a_n}) \phi(x) \, \dx \ra \int_{\R^N } g(x) f_0(\tilde{u}) \phi(x) \, \dx, \text{ as } n \ra \infty.
    \end{align*}
    We split 
    \begin{align*}
        \abs{f_{a_n}(u_{a_n}) - f_0(\tilde{u})} \le \abs{f_{a_n}(u_{a_n}) - f_0(u_{a_n})} + \abs{ f_0(u_{a_n}) - f_0(\tilde{u})}.
    \end{align*}
    Using the continuity of $K_0'$,  $\int_{\R^N} g(x) \abs{ f_0(u_{a_n}) - f_0(\tilde{u})} \phi(x) \ra 0$, as $n \ra \infty$. Further, $\abs{f_{a_n}(u_{a_n}) - f_0(u_{a_n})} \le a_n$. Therefore, 
    \begin{align*}
        &\int_{\R^N} g(x) \abs{f_{a_n}(u_{a_n}) - f_0(\tilde{u})} \phi(x) \, \dx \\
        &\le a_n \int_{\R^N} g(x) \phi(x) \, \dx +  \int_{\R^N} g(x) \abs{ f_0(u_{a_n}) - f_0(\tilde{u})} \phi(x) \, \dx \ra 0, \text{ as } n \ra \infty.
    \end{align*}
    From the weak formulation 
    \begin{align*}
        \iint_{\R^{2N}} \frac{\Phi\big(u_{a_n}(x)-u_{a_n}(y)\big)\big(\phi(x) - \phi(y)\big)}{|x-y|^{N+sp}}\, \dxy = \int_{\R^N } g(x) f_{a_n}(u_{a_n}) \phi(x) \, \dx.
    \end{align*}
    Taking the limit as $n \ra \infty$ gives 
    \begin{align*}
        \iint_{\R^{2N}} \frac{\Phi\big(\tilde{u}(x)-\tilde{u}(y)\big)\big(\phi(x) - \phi(y)\big)}{|x-y|^{N+sp}}\, \dxy = \int_{\R^N } g(x) f_0(\tilde{u}) \phi(x) \, \dx, ~~ \forall \, \phi \in \Dsp, \phi \ge 0.
    \end{align*}
    Now, for any $\phi \in \Dsp$, we write $\phi = \phi^+ - \phi^-$ where we see that the above identity holds for both $\phi^+$ and $\phi^-$. Thus 
    \begin{align}\label{3.18}
        \iint_{\R^{2N}} \frac{\Phi\big(\tilde{u}(x)-\tilde{u}(y)\big)\big(\phi(x) - \phi(y)\big)}{|x-y|^{N+sp}}\, \dxy = \int_{\R^N } g(x) f_0(\tilde{u}) \phi(x) \, \dx, ~~ \forall \, \phi \in \Dsp, 
    \end{align}
   which implies that $\tilde{u}$ is a weak solution to \eqref{3.17}. Now using a similar set of arguments as in the proof of Proposition \ref{regularity}, we obtain  $\tilde{u} \in L^r(\R^N)$ for every $r \in [p^*_s, \infty]$. Next, we show that $\tilde{u} \in \mathcal{C}(\R^N)$.
    Let $\Omega \subset \R^N$ be an open and bounded Lipschitz set. Let $ \phi \in W^{s,p}(\Omega)$ and  $\phi$ is compactly supported in $\Omega$. Define $\tilde{\phi}(x) = \phi(x)$ for $x \in \Omega$, and $\tilde{\phi}(x) = 0$ for $x \in \R^N \setminus \Omega$. According to \cite[Theorem 5.1]{DNPV2012}, $\tilde{\phi} \in W^{s,p}(\R^N)$. Using $W^{s,p}(\R^N) \hookrightarrow L^{p_s^*}(\R^N)$\cite[Theorem 6.5]{DNPV2012}, it follows that $\tilde{\phi} \in L^{p_s^*} (\R^N)$. Therefore, based on the characterization of $\Dsp$ in \eqref{dsp}, we  conclude that $\tilde{\phi} \in \Dsp$.
    Now $\tilde{u} \in \Dsp \cap L^{\infty}(\R^N)$. Using the embeddings $\Dsp \hookrightarrow L^{p^*_s}(\R^N)$ and $L^{p^*_s}(\Omega) \hookrightarrow L^p(\Omega)$, observe that $\Dsp \subset W^{s,p}_{\text{loc}}(\Omega)$. Also from the definition, $L^{\infty}(\R^N) \subset L^{p-1}_{sp}(\R^N)$. Therefore, in view of \eqref{3.18} we see that $\tilde{u} \in W^{s,p}_{\text{loc}}(\Omega) \cap L^{p-1}_{sp}(\R^N)$ satisfies the following identity:
\begin{align*}
    \iint_{\R^{2N}} \frac{\Phi\big(\tilde{u}(x)-\tilde{u}(y)\big)\big(\tilde{\phi}(x) - \tilde{\phi}(y)\big)}{|x-y|^{N+sp}}\, \dxy & = \int_{\R^N }  g(x) f_0(\tilde{u}) \tilde{\phi}(x) \, \dx, \\
    & = \int_{ \Omega } g(x) f_0(\tilde{u}) \phi(x) \, \dx, ~~ \forall \phi \in W^{s,p}(\Omega), \text{supp}(\phi) \subset \Omega.
\end{align*}
Moreover, $g \in L^{1}(\R^N) \cap L^{\infty}(\R^N)$, $\abs{f_0(\tilde{u})} \leq \epsilon \abs{\tilde{u}}^{p-1} + C(f,\epsilon) \abs{\tilde{u}}^{\gamma-1}$ (by \eqref{2.2}), and $\tilde{u} \in L^{\infty}(\R^N)$. Hence,  $g(x)f_0(\tilde{u}) \in L^{q}(\R^N)$ for $q>\frac{N}{sp}$. Thus, applying \cite[Theorem 1.4]{Brasco_holder} for $p\geq 2$ and \cite[Theorem 1.2]{Garain2023} for $1<p<2$ over $\tilde{u}$, where 
\[ (-\Delta)_{p}^{s} \tilde{u} = g(x) f_0(\tilde{u}) \text{ in } \Omega,\]
we conclude that $\tilde{u} \in \mathcal{C}_{\text{loc}}^{\delta}(\Omega)$ for some $\delta \in (0,1)$. In particular, $\tilde{u} \in \mathcal{C}_{\text{loc}}(\Omega)$ and hence we can get $\tilde{u} \in \mathcal{C}(\Omega)$ for every bounded open Lipschitz set $\Omega \subset \R^N$. Next, for any compact set $K \subset \R^N$, we have $K \subset \Omega$ for some bounded open Lipschitz set $\Omega$. Thus, $\tilde{u} \in \mathcal{C}(K)$ as well which implies $\tilde{u} \in \mathcal{C}_{\text{loc}}(\R^N)$. Therefore,  $\tilde{u} \in \mathcal{C}(\R^N)$. 
\end{proof}

Next, for a sequence $\{ a_n \}$ going to zero, we prove the uniform convergence of $\{ u_{a_n} \}$ over $\R^N$.

\begin{proposition}\label{unifrom convergence}
     Let $p \in (\frac{2N}{N+2s}, \frac{N}{s})$. Consider $a_n, \tilde{u}$ given in Proposition \ref{properties of limit function}. Then $$\norm{ u_{a_n} - \tilde{u}}_{\infty} \ra 0, \text{ as } n \ra \infty.$$
\end{proposition}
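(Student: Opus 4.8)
The plan is to upgrade the strong convergence $u_{a_n}\to\tilde u$ in $\Dsp$ — hence in $L^{p^*_s}(\R^N)$ and, along a subsequence, a.e.\ in $\R^N$ — obtained in Proposition~\ref{properties of limit function}, to uniform convergence, by combining a \emph{uniform} interior H\"older estimate with a \emph{uniform} decay estimate at infinity. Write $v_n:=u_{a_n}$. By Proposition~\ref{regularity} and Proposition~\ref{properties of limit function}, $M:=\sup_{n}\norm{v_n}_{\infty}+\norm{\tilde u}_{\infty}<\infty$. Since $\abs{f_{a_n}(t)}\le C\big(1+\abs{t}^{p^*_s-1}\big)$ by \eqref{2.3} and $g\in L^1(\R^N)\cap L^\infty(\R^N)$, the right-hand sides $h_n:=g\,f_{a_n}(v_n)$ and $h_0:=g\,f_0(\tilde u)$ are bounded in $L^q(\R^N)$ uniformly in $n$, for every $q\in[1,\infty]$; fix one such $q>\tfrac{N}{sp}$.

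Applying the local regularity theory to $(-\Delta)^s_p v_n=h_n$ and $(-\Delta)^s_p\tilde u=h_0$ in $\R^N$ — namely \cite[Theorem 1.4]{Brasco_holder} for $p\ge2$ and \cite[Theorem 1.2]{Garain2023} for $1<p<2$, exactly as in the proof of Proposition~\ref{properties of limit function} — each solution is locally H\"older continuous, with the $C^{\delta}\big(B_1(x_0)\big)$-norm bounded in terms of $\norm{v_n}_{L^\infty(B_2(x_0))}$, the nonlocal tail of $v_n$ based at $x_0$, and $\norm{h_n}_{L^q(B_2(x_0))}$. As $v_n\in L^\infty(\R^N)$ with $\norm{v_n}_\infty\le M$, that tail is $\lesssim M$; together with the uniform $L^q$-bound on $h_n$ this yields a constant $\Lambda>0$, independent of $n$ and of $x_0\in\R^N$, with $\norm{v_n}_{C^{\delta}(B_1(x_0))}\le\Lambda$ for all $n$, and likewise for $\tilde u$. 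Thus $\{v_n\}$ is equibounded and equi-H\"older on all of $\R^N$. (The standing assumption $p>\tfrac{2N}{N+2s}$, i.e.\ $p^*_s>2$, is what keeps the relevant exponents in the range covered by these estimates.)

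To conclude, split $\R^N=\overline{B_R}\cup B_R^{c}$. On $\overline{B_R}$, the Arzel\`{a}--Ascoli theorem applied to the equi-H\"older family $\{v_n\}$, together with $v_n\to\tilde u$ a.e., gives $v_n\to\tilde u$ uniformly on $\overline{B_R}$. For the tail, first note that $\tilde u$ is uniformly continuous on $\R^N$ and $\tilde u\in L^{p^*_s}(\R^N)$, hence $\tilde u(x)\to0$ as $\abs{x}\to\infty$. Given $\ep>0$, the uniform H\"older bound fixes $\rho=\rho(\ep)\in(0,1)$ such that $\abs{v_n(x_0)}\ge\ep$ forces $\abs{v_n}\ge\ep/2$ throughout $B_\rho(x_0)$, whence $\norm{v_n}_{L^{p^*_s}(B_\rho(x_0))}^{p^*_s}\ge c_0(\ep)>0$. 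On the other hand, $\norm{v_n-\tilde u}_{p^*_s}\to0$ and $\norm{\tilde u}_{L^{p^*_s}(B_R^{c})}\to0$ provide $R_0=R_0(\ep)$ and $n_0$ with $\norm{v_n}_{L^{p^*_s}(B_{R_0}^{c})}^{p^*_s}<c_0(\ep)$ for all $n\ge n_0$, which is incompatible with $\abs{v_n(x_0)}\ge\ep$ at any $\abs{x_0}\ge R_0+1$ (as then $B_\rho(x_0)\subset B_{R_0}^{c}$). Choosing $R$ so large that simultaneously $v_n\to\tilde u$ uniformly on $\overline{B_R}$ and $\sup_{\abs{x}\ge R}\big(\abs{v_n(x)}+\abs{\tilde u(x)}\big)<\ep$ for all large $n$, we obtain $\norm{v_n-\tilde u}_\infty<2\ep$ for all large $n$, proving the claim.

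The main obstacle is to make the H\"older estimate genuinely \emph{uniform}: one must check that the constants supplied by \cite{Brasco_holder,Garain2023} depend only on $N,s,p,q$, on $\norm{g}_{L^1\cap L^\infty}$ and on $\sup_a\norm{u_a}_\infty$, and not on $n$ or on the base point $x_0$ (this is where the uniform $L^\infty$ bound of Proposition~\ref{regularity} is essential). The second delicate point is the tail step, which crucially exploits that $u_{a_n}\to\tilde u$ in $L^{p^*_s}(\R^N)$ \emph{strongly} — the content of the Palais--Smale compactness used in Proposition~\ref{properties of limit function}; under mere weak $L^{p^*_s}$-convergence, $L^{p^*_s}$-mass of $v_n$ could escape to infinity and the decay estimate would fail.
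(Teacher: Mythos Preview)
Your argument is correct but follows a genuinely different route from the paper. The paper works \emph{directly} with the difference $w_n=u_{a_n}-\tilde u$: subtracting the two weak formulations, it tests with $\abs{w_n}^{q-1}w_n$ and invokes the monotonicity inequalities of \cite[Lemmas 2.3 and 2.4]{Antonio2020} (separately for $p\ge2$ and $p<2$) to obtain a Moser-type recursion that bounds $\norm{w_n}_{l_j}$ by a power of $\norm{w_n}_{p^*_s}$, uniformly in $j$; letting $l_j\to\infty$ gives $\norm{w_n}_\infty\le C\norm{w_n}_{p^*_s}^{\theta}$ for some $\theta>0$. Your approach instead treats the sequence $\{u_{a_n}\}$ itself: you pull a uniform $C^\delta$-bound from the regularity results of \cite{Brasco_holder,Garain2023}, then combine Arzel\`a--Ascoli on balls with an equicontinuity-plus-tightness argument to kill the tail. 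Both routes rely on the uniform $L^\infty$-bound of Proposition~\ref{regularity} and on the strong $L^{p^*_s}$-convergence from Proposition~\ref{properties of limit function}.

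What each buys: the paper's iteration is self-contained and yields a quantitative estimate $\norm{u_{a_n}-\tilde u}_\infty\le C\norm{u_{a_n}-\tilde u}_{p^*_s}^{\theta}$, but it genuinely needs $p^*_s>2$ (equivalently $p>\tfrac{2N}{N+2s}$) in the subquadratic case, since the H\"older step there uses the conjugate exponent $\bar r=\bar q/(p^*_s-2)$. Your argument is softer and leans on the cited black-box regularity, but it does \emph{not} appear to require $p^*_s>2$: the H\"older theorems in \cite{Brasco_holder,Garain2023} cover all $p\in(1,\infty)$, and your tail estimate uses only $p^*_s<\infty$. So your parenthetical remark that the restriction $p>\tfrac{2N}{N+2s}$ ``keeps the relevant exponents in the range covered by these estimates'' is not accurate --- that hypothesis is an artifact of the paper's method, and your approach seems to dispense with it. Two minor points worth tightening: spell out the subsequence-of-subsequence argument behind Arzel\`a--Ascoli so that the \emph{full} sequence converges uniformly on $\overline{B_R}$, and state explicitly which quantities the constants in \cite{Brasco_holder,Garain2023} depend on so the uniformity in $n$ and in the base point $x_0$ is visibly justified.
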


\begin{proof}
  Using $a_n \ra 0$, there exists $n_1 \in \N$ such that $a_n < a_2$ for all $n \ge n_1$. Now for $n \ge n_1$, $I_{a_n}'(u_{a_n}) = 0$, and hence the following identity holds:
  \begin{align*}
     \iint_{\R^{2N}} \frac{\Phi(u_{a_n}(x)-u_{a_n}(y))(\phi(x)-\phi(y))}{|x-y|^{N+sp}}\, \dxy = \int_{\R^N} g(x) f_{{a_n}}(u_{a_n})\phi(x) \,\dx, \quad \forall \, \phi \in \Dsp.
  \end{align*}
  For brevity we denote $u_n: = u_{a_n}$ and $f_n(\cdot) := f_{a_n}(\cdot)$. Applying Proposition \ref{properties of limit function}, $u_n \ra \tilde{u}$ in $\Dsp$ and 
  \begin{align*}
     \iint_{\R^{2N}} \frac{\Phi(\tilde{u}(x)-\tilde{u}(y) )(\phi(x)-\phi(y))}{|x-y|^{N+sp}}\, \dxy = \int_{\R^N} g(x) f_{0}(\tilde{u})\phi(x) \,\dx, \quad \forall \, \phi \in \Dsp.
  \end{align*}
  Subtracting the above identities, we get 
  \begin{align}\label{3.19}
      &\iint_{\R^{2N}} \frac{ \Phi(u_{n}(x)-u_{n}(y)) - \Phi(\tilde{u}(x)-\tilde{u}(y) )}{|x-y|^{N+sp}} (\phi(x)-\phi(y)) \, \dxy \no \\ 
      & = \int_{\R^N} g(x) \left( f_n(u_n) - f_0(\tilde{u}) \right) \phi(x) \, \dx. 
  \end{align}
Now we define $w_n = u_n - \tilde{u}$. Since $u_n, \tilde{u} \in \Dsp \cap L^r(\R^N)$ (Proposition \ref{regularity} and Proposition \ref{properties of limit function}), we have $w_n \in \Dsp \cap L^r(\R^N)$ for every $r \in [p^*_s, \infty]$. Taking $\alpha =w_n(x)$ and $\beta=w_n(y)$ into the inequality \eqref{inequality-squassina} and following the same arguments in the proof of Proposition \ref{regularity}, where it is shown that $\phi=((u_a^+)_M)^{\sigma} \in \Dsp$, we get $\abs{w_n}^{q-1} w_n \in \Dsp$ for every $q \ge 1$. 
   
\noi \textbf{For $p \ge 2$:} We use \cite[Lemma 2.3]{Antonio2020} to obtain the following lower bound of the right hand side of \eqref{3.19}:
\begin{align*}
\norm{\abs{w_n}^{\frac{q+p-1}{p}-1}w_n}_{s,p}^{p} \le C(p) q^{p-1} & \iint_{\R^{2N}} \frac{ \Phi(u_{n}(x)-u_{n}(y)) - \Phi(\tilde{u}(x)-\tilde{u}(y) )}{|x-y|^{N+sp}} \\
&\left( \abs{w_n(x)}^{q-1} w_n(x)-(\abs{w_n(y)}^{q-1} w_n(y) \right) \, \dxy. 
\end{align*}
Hence using the embedding $\Dsp \hookrightarrow L^{p^*_s}(\R^N)$  and choosing $\phi=\abs{w_n}^{q-1}w_n$ in \eqref{3.19} we obtain 
\begin{align}\label{3.20}
  & C(N,s,p)\left(\int_{\R^N}\left( \abs{w_n(x)}^{\frac{q+p-1}{p}} \right)^{p^*_s}\,\dx\right)^{\frac{p}{p^*_s}} \le  \norm{\abs{w_n}^{\frac{q+p-1}{p}-1}w_n}_{s,p}^{p} \no \\
  &\le C(p)q^{p-1} \int_{\R^N} g(x) \left| f_n(u_n) - f_0(\tilde{u}) \right| |w_n(x)|^q \, \dx.
\end{align}
Define $g_n(t):= f_n(t+ \tilde{u}) - f_0(\tilde{u}).$ Using \eqref{2.3}, we have $\abs{f_n(t)} \le C(f,a_2) (1 + \abs{t}^{p^*_s -1}), \abs{f_0(t)} \le C(f)(1 + \abs{t}^{p^*_s -1}); t \in \R$. By noting that $g_n(w_n) = f_n(u_n) - f_0(\tilde{u})$, 
\begin{align}\label{3.21}
    \abs{g_n(w_n)} \le \abs{f_n(u_n)} + \abs{f_0(\tilde{u})} \le C(f, a_2) \left(1+ \abs{u_n}^{p^*_s -1} + \abs{\tilde{u}}^{p^*_s -1}\right) \le C \left(1 + 
  \abs{w_n}^{p^*_s -1} \right),
\end{align}
where $C$ does not depend on $a_n$. Therefore, \eqref{3.20} yields
\begin{align}\label{3.22}
     \left(\int_{\R^N}\left( \abs{w_n(x)}^{\frac{q+p-1}{p}} \right)^{p^*_s}\,\dx\right)^{\frac{p}{p^*_s}} & \le C(N,s,p,f,a_2) q^{p-1} \no \\
     &\int_{\R^N} g(x)\left(1 + \abs{w_n(x)}^{p^*_s -1} \right) \abs{w_n(x)}^q \, \dx. 
\end{align}
Let us consider
\begin{align}\label{3.25}
    \Bar{q} > \frac{N(p_s^* -1)}{sp} \; \text{ and } \; \Bar{r} = \frac{\Bar{q}}{p^*_s-1}.
\end{align} 
Observe that $\frac{N(p_s^* -1)}{sp}>p_s^*$. Hence $\{w_n\}$ is bounded in $L^{\Bar{q}}(\R^N)$ (by Proposition \ref{regularity} and Proposition \ref{properties of limit function}). 
For the conjugate pair $(\Bar{r},\Bar{r}')$, define 
\begin{align}\label{GnHn}
    G_n(x,w_n) := (g(x))^{\frac{1}{\Bar{r}}}\left( 1+ \abs{w_n(x)}^{p^*_s-1}\right) \, \text{ and } \, H_n(x,w_n):=(g(x))^{\frac{1}{\Bar{r}'}}\abs{w_n(x)}^{q}.
\end{align}
Since $g \in L^{1}(\R^N)\cap L^{\infty}(\R^N)$, we have 
\begin{align*}
    \int_{\R^N} \abs{G_n(x,w_n)}^{\Bar{r}}\,\dx \leq 2^{\Bar{r}-1} \int_{\R^N} g(x)\,\dx + 2^{\Bar{r}-1}\norm{g}_{\infty} \int_{\R^N}\abs{w_n(x)}^{\Bar{q}}\,\dx \leq C(N,s,p,f,g,a_2).
\end{align*}
 If $q \Bar{r}' <p_s^*$, then applying Proposition \ref{compactness weight g},   
 $\int_{\R^N} \abs{H_n(x,w_n)}^{\Bar{r}'}\,\dx = \int_{\R^N}g(x)\abs{w_n(x)}^{q \Bar{r}'}\,\dx < \infty$. If $q \Bar{r}' \geq p_s^*$, then using boundedness of $\{w_n\}$ in $L^{q \Bar{r}'}(\R^N)$ (by Proposition \ref{regularity} and Proposition \ref{properties of limit function}), $\int_{\R^N} \abs{H_n(x,w_n)}^{\Bar{r}'}\,\dx \leq \norm{g}_{\infty}  \int_{\R^N}\abs{w_n(x)}^{q \Bar{r}'}\,\dx <\infty.$
  Hence we apply the H\"{o}lder's inequality with the conjugate pair $(\Bar{r}, \Bar{r}')$ to get from \eqref{3.22} that
\begin{align}\label{3.30}
     &\left(\int_{\R^N} \abs{w_n(x)}^{\frac{(q+p-1)p^*_s}{p}} \,\dx\right)^{\frac{p}{p^*_s}} \le C q^{p-1} \int_{\R^N} G_n(x,w_n) H_n(x,w_n) \, \dx \no \\
     & \le C q^{p-1} \left(\int_{\R^N}\abs{G_n(x,w_n)}^{\Bar{r}}\,\dx \right)^{\frac{1}{\Bar{r}}} \left( \int_{\R^N}\abs{H_n(x,w_n)}^{\Bar{r}'}\,\dx \right)^{\frac{1}{\Bar{r}'}} \no \\
     &\leq \widetilde{C} q^{p-1} \left( \int_{\R^N} g(x)\abs{w_n(x)}^{q\Bar{r}'}\,\dx \right)^{\frac{1}{\Bar{r}'}},
\end{align}
where $C= C(N,s,p,f,a_2)$ and $\widetilde{C}= \widetilde{C}(N,s,p,f,g,a_2)$. Define $\theta := \frac{p^*_s}{p \Bar{r}'}.$ Using the definition of $\Bar{r}$ in \eqref{3.25} it follows that $\theta >1$.
We consider two sequences $\{l_j\}$ and $\{m_j\}$ by the following recursive process:
\[ l_0 = p_s^*,  \quad l_{j+1}= \theta l_j + \frac{p^*_s(p-1)}{p}, \quad m_j = \frac{l_j}{\Bar{r}'}.\]
Observe that $l_j \ge p^*_s$ and $l_{j+1}= \frac{(m_j+p-1)p^*_s}{p}$ for each $j \in \mathbb{N}$, and $l_j,m_j \ra \infty$ as $j \rightarrow \infty$. Thus, for $q=m_j$ in \eqref{3.30} we obtain the following for all $n, j \in \mathbb{N}$:
\begin{align}\label{3.31}
     \int_{\R^N} \abs{w_n(x)}^{l_{j+1}} \,\dx 
      &\le \left(\widetilde{C} m_{j}^{p-1}\right)^{\frac{p^*_s}{p}} \left( \int_{\R^N} g(x)\abs{w_n(x)}^{m_j\Bar{r}'}\,\dx \right)^{\frac{p^*_s}{p \Bar{r}'}} \no \\
      &\le \left(\widetilde{C} m_{j}^{p}\right)^{\theta \Bar{r}'} \norm{g}_{\infty}^{\theta} \left( \int_{\R^N}\abs{w_n(x)}^{l_j}\,\dx \right)^{\theta} \no\\ 
      &\le \left(\widetilde{C} m_{j}^{p}\right)^{\theta \Bar{r}'} \left( \int_{\R^N}\abs{w_n(x)}^{l_j}\,\dx \right)^{\theta},
\end{align}
for some $\widetilde{C}= \widetilde{C}(N,s,p,f,g,a_2)$. By Lemma \ref{appendix lem1} we observe that $m_j \sim \frac{\theta^j}{\Bar{r}'}$ as $j \ra \infty$. Thus iterating on \eqref{3.31} we obtain
\begin{align}\label{3.32}
    \int_{\R^N} \abs{w_n(x)}^{l_{j}} \,\dx & \leq \prod_{i=0}^{j-1} \left(\widetilde{C} m_{i}^{p} \right)^{\theta^{j-i} \Bar{r}'} \left(\int_{\R^N}\abs{w_n(x)}^{l_0}\,\dx \right)^{\theta^j} \no \\
    & \leq  \left(\widetilde{C} \right)^{\sum\limits_{i=0}^{j-1} \theta^{j-i} \Bar{r}'} \theta^{p \Bar{r}' \sum\limits_{i=0}^{j-1} i \theta^{j-i} } \left(\int_{\R^N}\abs{w_n(x)}^{p^*_s}\,\dx \right)^{\theta^j}.
\end{align}
Notice that $S_1:= \sum\limits_{i=0}^{\infty}\theta^{-i} < \infty$ and $S_2 := \sum\limits_{i=0}^{\infty} i \theta^{-i} < \infty$. Take $C> \max \{1, \widetilde{C} \}$. Then for all $n \ge n_1,j \in \N$ using \eqref{3.32} we deduce 
\begin{align*}
    \int_{\R^N} \abs{w_n(x)}^{l_{j}} \,\dx  \leq  C^{\theta^j S_1} \theta^{p \Bar{r}' \theta^{j} S_2} \left(\int_{\R^N}\abs{w_n(x)}^{p^*_s}\,\dx \right)^{\theta^j}.
\end{align*}
Since $w_n \ra 0$ in $L^{p^*_s}(\R^N)$, there exists $n_2 \in \N$ such that $\norm{w_n}_{p^*_s}^{p^*_s} <1$ for all $n \ge n_2$. Therefore, for $n \ge \max\{n_1,n_2\}$ from the above inequality, we obtain
\begin{align*}
     \norm{w_n}_{l_{j}} \leq  C^{\frac{\theta^j}{l_j} S_1} \theta^{p \Bar{r}' \frac{\theta^j}{l_j} S_2} \left( \norm{w_n}_{p^*_s}^{p^*_s} \right)^{\frac{\theta^j}{l_j}} \leq C^{\beta_2 S_1} \theta^{p \Bar{r}' \beta_2 S_2} \left( \norm{w_n}_{p^*_s}^{p^*_s} \right)^{\beta_1},
\end{align*}
where $\beta_1,\beta_2>0$ have been chosen such that for all $j \in \N$, $\beta_1 < \frac{\theta^j}{l_j} <\beta_2$ (see Lemma \ref{appendix lem1}). 
Therefore, there exists $C>1$ such that for all $n \ge \max\{n_1,n_2\}$ and $j \in \N$ large enough
\begin{equation} \label{3.33}
    \norm{w_n}_{l_{j}} \leq  C \norm{w_n}_{p^*_s}^{\beta_1 p^*_s}
\end{equation}
Therefore, taking the limit as $j \ra \infty$ in \eqref{3.33} and recalling that $w_n \in L^{\infty}(\R^N)$, we obtain for each $n \ge \max\{n_1,n_2\}$,
\begin{equation*}
    \norm{w_n}_{\infty} \leq  C \norm{w_n}_{p^*_s}^{\beta_1 p^*_s},
\end{equation*}
where $C = C(N,s,p,f,g,a_2)$. Finally, we take the limit as $n \ra \infty$ in the above inequality and use $w_n \ra 0$ in $L^{p^*_s}(\R^N)$ to get the required
convergence.

\noi \textbf{For $\frac{2N}{N+2s}<p<2$:} For any $q \ge 1$, we take $\abs{w_n}^{q -1} w_n \in \Dsp$ as a test function in \eqref{3.19} and use \cite[Lemma 2.4]{Antonio2020} to obtain 
\begin{align*}
 \frac{\norm{ \abs{w_n}^{\frac{q+1}{2}-1} w_n}_{s,p}^2}{ \left( \norm{u_n}_{s,p}^p + \norm{\tilde{u}}_{s,p}^p \right)^{2-p}} &\le Cq \iint_{\R^{2N}} \frac{ \Phi(u_{n}(x)-u_{n}(y)) - \Phi(\tilde{u}(x)-\tilde{u}(y) )}{|x-y|^{N+sp}} \\
&\quad \quad \quad \left( \abs{w_n(x)}^{q-1} w_n(x)-(\abs{w_n(y)}^{q-1} w_n(y) \right) \, \dxy \\
&\le Cq \int_{\R^N} g(x) \left| f_n(u_n) - f_0(\tilde{u}) \right| |w_n(x)|^q \, \dx.
\end{align*}
Hence using $\Dsp \hookrightarrow L^{p^*_s}(\R^N)$, the uniform boundedness of $\{ u_n\}$ over $\Dsp$ (Theorem \ref{Existence and uniform boundedness}), and \eqref{3.21}  we get
\begin{align}\label{3.34}
    &\left(\int_{\R^N}\left( \abs{w_n(x)}^{\frac{q+1}{2}} \right)^{p^*_s}\,\dx\right)^{\frac{2}{p^*_s}} \le q C(N,s,p) \left( \norm{u_n}_{s,p}^p + \norm{\tilde{u}}_{s,p}^p \right)^{2-p} \no \\
    & \quad \quad \quad \int_{\R^N} g(x) \left| f_n(u_n) - f_0(\tilde{u}) \right| |w_n(x)|^q \, \dx \no \\
    & \le q C(N,s,p,f,g,a_2) \int_{\R^N} g(x)  \left(1 + \abs{w_n(x)}^{p^*_s -1} \right) \abs{w_n(x)}^q \, \dx. 
\end{align}
Observe that $p>\frac{2N}{N+2s} \iff p_s^*>2$. For $\Bar{q}$ as given in \eqref{3.25}, we set
\begin{align}\label{u1}
     \Bar{r} = \frac{\Bar{q}}{p^*_s-2}.
\end{align} 
For this $\Bar{r}$ and its conjugate exponent $\Bar{r}'$, we consider $G_n$ and $H_n$ as defined in \eqref{GnHn}.
Using the fact that $\Bar{r}(p_s^* -1) > \Bar{q}>p_s^*$ and boundedness of $\{w_n\}$ in $L^{\Bar{r}(p_s^* -1)}(\R^N)$, 
\begin{align*}
    \int_{\R^N} \abs{G_n(x,w_n)}^{\Bar{r}}\,\dx \leq 2^{\Bar{r}-1} \int_{\R^N} g(x)\,\dx + 2^{\Bar{r}-1}\norm{g}_{\infty} \int_{\R^N}\abs{w_n(x)}^{\Bar{r}(p_s^* -1)}\,\dx \leq C(N,s,p,f,g,a_2).
\end{align*}
Moreover, following similar arguments as given earlier, $\int_{\R^N} \abs{H_n(x,w_n)}^{\Bar{r}'}\,\dx< \infty$. Thus applying the H\"{o}lder's inequality with the conjugate pair $(\Bar{r}, \Bar{r}')$ we get from \eqref{3.34} that
\begin{align}\label{3.39}
     &\left(\int_{\R^N} \abs{w_n(x)}^{\frac{(q+1)p^*_s}{2}} \,\dx\right)^{\frac{2}{p^*_s}} \le C q \int_{\R^N} G_n(x,w_n) H_n(x,w_n) \, \dx \no \\
     & \le C q \left(\int_{\R^N}\abs{G_n(x,w_n)}^{\Bar{r}}\,\dx \right)^{\frac{1}{\Bar{r}}} \left( \int_{\R^N} g(x) \abs{w_n(x)}^{q\Bar{r}'}\,\dx \right)^{\frac{1}{\Bar{r}'}} \no \\
     &\leq \widetilde{C} q \left( \int_{\R^N} g(x) \abs{w_n(x)}^{q\Bar{r}'}\,\dx \right)^{\frac{1}{\Bar{r}'}},
\end{align}
where $\widetilde{C}= \widetilde{C}(N,s,p,f,g,a_2)$. Define $\theta := \frac{p^*_s}{2 \Bar{r}'}.$ Using the definition of $\Bar{r}$ in \eqref{u1} we can verify that  $\theta >1$.
We consider two sequences $\{l_j\}$ and $\{m_j\}$ by the following recursive process:
\[ l_0 = p_s^*,  \quad l_{j+1}= \theta l_j + \frac{p^*_s}{2}, \quad m_j = \frac{l_j}{\Bar{r}'}.\]
Observe that $l_{j+1}= \frac{(m_j+1)p^*_s}{2}$  and $l_j,m_j \ra \infty$ as $j \rightarrow \infty$. Thus, for $q=m_j$ in \eqref{3.39} we obtain the following for all $n, j \in \mathbb{N}$:
\begin{align}\label{3.40}
     \int_{\R^N} \abs{w_n(x)}^{l_{j+1}} \,\dx 
      &\le  \left(\widetilde{C} m_{j}\right)^{\frac{p^*_s}{2}} \left( \int_{\R^N} g(x)\abs{w_n(x)}^{m_j\Bar{r}'}\,\dx \right)^{\frac{p^*_s}{2 \Bar{r}'}} \no \\
      & \le \left(\widetilde{C} m_{j}^{2}\right)^{\theta \Bar{r}'} \norm{g}_{\infty}^{\theta}  \left( \int_{\R^N}\abs{w_n(x)}^{l_j}\,\dx \right)^{\theta} \no \\
      &\le \left(\widetilde{C} m_{j}^{2}\right)^{\theta \Bar{r}'} \left( \int_{\R^N}\abs{w_n(x)}^{l_j}\,\dx \right)^{\theta},
\end{align}
for some $\widetilde{C}= \widetilde{C}(N,s,p,f,g,a_2)$. Iterating  \eqref{3.40} and following a similar calculation, we can deduce that for all $n \geq n_1,j \in \N$
\begin{align*}
    \int_{\R^N} \abs{w_n(x)}^{l_{j}} \,\dx  \leq  C^{\theta^jS_1 }\theta^{2 \Bar{r}' \theta^{j}S_2 } \left(\int_{\R^N}\abs{w_n(x)}^{p^*_s}\,\dx \right)^{\theta^j},
\end{align*}
where $C> \max \{1, \widetilde{C} \}$. Since $w_n \ra 0$ in $L^{p^*_s}(\R^N)$, there exists $n_3 \in \N$ such that $\norm{w_n}_{p^*_s}^{p^*_s} <1$ for all $n \ge n_3$. Therefore, for $n \ge \max\{n_1,n_3\}$ we obtain
\begin{align*}
     \norm{w_n}_{l_{j}}  \leq  C^{\frac{\theta^j}{l_j}S_1} \theta^{2 \Bar{r}' \frac{\theta^j}{l_j} S_2} \left( \norm{w_n}_{p^*_s}^{p^*_s} \right)^{\frac{\theta^j}{l_j}} \leq C^{\alpha_2 S_1} \theta^{2 \Bar{r}'  \alpha_2 S_2} \left( \norm{w_n}_{p^*_s}^{p^*_s} \right)^{\alpha_1},
\end{align*}
where $\alpha_1,\alpha_2$ have been chosen such that for all $j \in \N$, $\alpha_1 < \frac{\theta^j}{l_j} <\alpha_2$ (see Lemma \ref{appendix lem1}).
Finally, we find a $C>1$ such that for all $n \ge \max\{n_1,n_3\}$ and $j \in \N$ large enough
\begin{equation} \label{3.41}
    \norm{w_n}_{l_{j}} \leq  C \norm{w_n}_{p^*_s}^{\alpha_1 p^*_s}.
\end{equation}
Therefore, taking the limit as $j \ra \infty$ in \eqref{3.41} we obtain for each $n \ge \max\{n_1,n_3\}$,
\begin{equation*}
    \norm{w_n}_{\infty} \leq  C \norm{w_n}_{p^*_s}^{\alpha_1 p^*_s},
\end{equation*}
where $C = C(N,s,p,f,g,a_2)$. Taking the limit as $n \ra \infty$ in the above inequality, we get the required convergence.
\end{proof}

In the following proposition, we state a strong maximum principle for a nonlocal equation defined on $\R^N$. Our proof follows using the similar arguments given in \cite[Theorem A.1]{Brasco_convexity_properties}. For the sake of completeness, we sketch the proof. 
\begin{proposition}[Strong Maximum Principle]\label{SMP}
   Let $p \in (1, \infty)$ and $s \in (0,1)$. Let $f,g$ be given in Theorem \ref{Existence and uniform boundedness}. Assume that $u \in \Dsp$ is a weak supersolution of the following equation:
   \begin{align*}
       (-\Delta)_{p}^{s}u =  g(x)f(u)  \text{ in } \mathbb{R}^{N},
\end{align*}
and $u \ge 0$ a.e. in $\R^N$. Then either $u \equiv 0$ or $u>0$ a.e. in $\R^N$.
\end{proposition}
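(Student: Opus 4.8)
The plan is to reduce, by means of a logarithmic energy estimate uniform in a truncation parameter, to the assertion that the zero set of $u$ is at once ``open'' and ``closed'' up to null sets, and then to invoke the connectedness of $\R^N$; this mirrors the strategy of \cite[Theorem A.1]{Brasco_convexity_properties}. First I would dispose of the reaction term. Since $f(0)=0$, $f$ is continuous on $(0,\infty)$, $f(t)/t^{p-1}\to 0$ as $t\to0^+$ by \ref{f1}, and $f(t)>0$ for all large $t$ by \ref{f2}, there is a constant $\Lambda>0$ with $f(t)\ge-\Lambda\,t^{p-1}$ for every $t\ge0$. Because $u\ge0$ (so $\Phi(u)=u^{p-1}$) and $0<g\le\norm{g}_\infty$, the supersolution inequality for $u$ then upgrades to
\begin{equation*}
  \iint_{\R^{2N}}\frac{\Phi(u(x)-u(y))(\varphi(x)-\varphi(y))}{|x-y|^{N+sp}}\,\dxy \;\ge\; \int_{\R^N}g(x)f(u)\varphi\,\dx \;\ge\; -\Lambda\norm{g}_\infty\int_{\R^N}\Phi(u)\,\varphi\,\dx, \qquad \varphi\in\Dsp,\ \varphi\ge0;
\end{equation*}
that is, $u$ is a weak supersolution of $(-\Delta)_p^s u+\Lambda\norm{g}_\infty\Phi(u)=0$ in $\R^N$ with $u\ge0$ on \emph{all} of $\R^N$.

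Next I would establish a $d$-uniform logarithmic estimate. Fix $B_{2\rho}(x_0)\subset\R^N$ with $2\rho\le1$, a number $d>0$, and a cutoff $\psi\in C_c^\infty(B_{2\rho}(x_0))$ with $0\le\psi\le1$, $\psi\equiv1$ on $B_\rho(x_0)$ and $\abs{\Gr\psi}\le C/\rho$. Testing the inequality above with $\varphi=(d+u)^{1-p}\psi^p\in\Dsp$, the reaction term contributes only
\begin{equation*}
  -\Lambda\norm{g}_\infty\int_{\R^N}\Phi(u)\,(d+u)^{1-p}\psi^p\,\dx \;\ge\; -\Lambda\norm{g}_\infty\int_{\R^N}\psi^p\,\dx \;\ge\; -C\rho^N,
\end{equation*}
since $\Phi(u)=u^{p-1}\le(d+u)^{p-1}$, which is a lower order contribution. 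Running the algebraic manipulations that prove the logarithmic lemma (as in \cite{Brasco_convexity_properties}, and as in the logarithmic lemma of Di Castro--Kuusi--Palatucci) on the left-hand side, and noting that the nonlocal tail quantities vanish because $u\ge0$ throughout $\R^N$, I expect to obtain a constant $C=C(N,s,p,f,g)$, \emph{independent of $d$}, such that
\begin{equation*}
  \abs*{\log(d+u)}_{W^{s,p}(B_\rho(x_0))}^{p} \;\le\; C\bigl(\rho^{N-sp}+\rho^{N}\bigr) \;\le\; C\rho^{N-sp}.
\end{equation*}

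Then I would run the dichotomy. By the fractional Poincaré--Wirtinger inequality on $B_\rho(x_0)$ applied to $v_d:=\log(d+u)$, writing $m_d:=\abs{B_\rho(x_0)}^{-1}\int_{B_\rho(x_0)}v_d\,\dx$, one gets $\int_{B_\rho(x_0)}\abs{v_d-m_d}^{p}\,\dx\le C\rho^{sp}\,\abs*{v_d}_{W^{s,p}(B_\rho(x_0))}^{p}\le C\rho^{N}$, uniformly in $d$. Suppose both $E:=\{u=0\}\cap B_\rho(x_0)$ and $E':=\{u>0\}\cap B_\rho(x_0)$ have positive measure. On $E$ we have $v_d\equiv\log d$, so $\abs{E}\,\abs{\log d-m_d}^{p}\le C\rho^{N}$ forces $m_d\to-\infty$ as $d\to0^+$; but for a.e.\ $x\in E'$ we have $v_d(x)\to\log u(x)\in\R$, hence $v_d(x)-m_d\to+\infty$, and Fatou's lemma yields $\liminf_{d\to0^+}\int_{B_\rho(x_0)}\abs{v_d-m_d}^{p}\,\dx\ge\int_{E'}(+\infty)=+\infty$, contradicting the uniform bound. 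Hence on every sufficiently small ball centred at any point, either $u=0$ a.e.\ or $u>0$ a.e.; letting $V$ (resp.\ $W$) be the set of centres admitting such a ball with $u=0$ a.e.\ (resp.\ $u>0$ a.e.), both sets are open, disjoint, and cover $\R^N$, so by connectedness $u\equiv0$ or $u>0$ a.e.\ in $\R^N$.

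The main obstacle will be the $d$-uniform logarithmic estimate: one must re-run the proof of the logarithmic lemma for $(-\Delta)_p^s$ \emph{with} the reaction term present, and the key point is that the bound $f(t)\ge-\Lambda t^{p-1}$ (resting on \ref{f1}, \ref{f2} and the continuity of $f$) is precisely what makes that term absorbable as an $O(\rho^N)$ error, uniformly in the truncation level $d$. A secondary delicate point is the passage $d\to0^+$ in the Poincaré--Fatou step, which works only because $\abs{E}>0$, forcing the averages $m_d$ to diverge.
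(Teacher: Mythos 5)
Your proposal is correct and rests on the same two pillars as the paper's proof: the logarithmic energy estimate for the fractional $p$-Laplacian (Di Castro--Kuusi--Palatucci) and the observation that a zero set of positive measure in a ball forces vanishing on the whole ball, followed by a propagation argument. The differences are in execution, and one of them is substantive. Where you bound $f(t)\ge -\Lambda t^{p-1}$ (via \ref{f1}, \ref{f2} and continuity) and re-run the logarithmic lemma with the reaction term absorbed as an $O(\rho^N)$ error, the paper instead asserts at the outset that $\int_{\R^N} g\,f(u)\phi\,\dx\ge 0$ so that $u$ is a supersolution of the homogeneous equation, and then cites the logarithmic lemma as a black box; since \ref{f1}--\ref{f2} do not force $f\ge 0$ on all of $\R^+$, your treatment of this point is actually the more careful one (at the cost of having to redo the log-lemma computation, which is routine). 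For the dichotomy you use fractional Poincar\'e--Wirtinger plus Fatou on $\log(d+u)$; the paper uses the more elementary pointwise identity $|F_\delta(x)|^p=|F_\delta(x)-F_\delta(y)|^p$ for $y$ in the zero set, integrated twice, which avoids Poincar\'e altogether --- both work. Finally, you globalize by the open--closed connectedness argument, while the paper chains overlapping balls through arbitrary compact connected sets; these are equivalent. The only step you should not leave as ``I expect to obtain'' in a final write-up is the $d$-uniform log estimate with the reaction term, but the absorption you describe is exactly right.
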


\begin{proof}
    Take $\phi \in \Dsp$ and $\phi \ge 0$. By the hypothesis, we have 
    \begin{align*}
        \iint_{\R^{2N}} \frac{\Phi\big(u(x)-u(y)\big)\big(\phi(x) - \phi(y)\big)}{|x-y|^{N+sp}}\, \dxy \ge  \int_{\R^N } g(x) f(u) \phi(x) \, \dx \ge 0.
    \end{align*}
     Let $K \subset \subset \R^N$ be any compact connected set. We first show that either $u \equiv 0$ or $u>0$ a.e. in $K$. Since $K$ is compact, we choose $x_1,x_2,\dots, x_k$ in $\R^N$ such that $K \subset \cup_{i=1}^k B_{\frac{r}{2}}(x_i)$, and $\abs{B_{\frac{r}{2}}(x_i) \cap B_{\frac{r}{2}}(x_{i+1})} >0$ for each $i$. Suppose $u \equiv 0$ on a subset of $K$ with a positive measure. Then there exists $j \in \{1, \dots, k \}$ such that $\mathcal{A} = \{ x \in B_{\frac{r}{2}}(x_j) : u(x) = 0 \}$ has a positive measure, i.e., $|\mathcal{A}| >0$.
     We define 
    \begin{align*}
        F_{\delta}(x) = \log \left( 1 + \frac{u(x)}{\delta} \right) \text{ for } x \in B_{\frac{r}{2}}(x_j) \text{ and } \delta>0.
    \end{align*}
    Clearly, $F_{\delta} \equiv 0$ on $\mathcal{A}$. Take $x \in B_{\frac{r}{2}}(x_j)$ and $y \in \mathcal{A}$ with $y \neq x$. Then 
    \begin{align*}
        \abs{F_{\delta}(x)}^p = \frac{ \abs {F_{\delta}(x) - F_{\delta}(y)}^p }{ \abs{x-y}^{N+sp} } \abs{x-y}^{N+sp}.
     \end{align*}
     Integrating the above identity over $\mathcal{A}$ we get
  \begin{align*}
      \abs{ \mathcal{A} } \abs{F_{\delta}(x)}^p \le \max_{x,y \in B_{\frac{r}{2}}(x_j)}{ \abs{x-y}^{N+sp} } \int_{ B_{\frac{r}{2}}(x_j)  } \left|\log \left( \frac{u(x) + \delta}{u(y) + \delta} \right) \right|^p \, \frac{\dy}{\abs{x-y}^{N+sp}}.
  \end{align*}
Further, the integration over $ B_{\frac{r}{2}}(x_j) $ yields
 \begin{align}\label{3.42}
     \int_{ B_{\frac{r}{2}}(x_j) } \abs{F_{\delta}(x)}^p \, \dx \le \frac{r^{N+sp}}{\abs{\mathcal{A}}} \iint_{B_{\frac{r}{2}}(x_j) \times B_{\frac{r}{2}}(x_j)} \left|\log \left( \frac{u(x) + \delta}{u(y) + \delta} \right) \right|^p \, \frac{\dy \dx}{\abs{x-y}^{N+sp}}.
 \end{align}
 Now on $B_{\frac{r}{2}}(x_j)$ we use the Logarithmic energy estimate on $u$ (see \cite[Lemma 1.3]{Castro_local-Behaviour}), to get 
 \begin{align}\label{3.43}
     \iint_{B_{\frac{r}{2}}(x_j) \times B_{\frac{r}{2}}(x_j)} \left|\log \left( \frac{u(x) + \delta}{u(y) + \delta} \right) \right|^p \, \frac{\dy \dx}{\abs{x-y}^{N+sp}}
     \leq C(N,s,p) r^{N-sp}.
 \end{align}
From \eqref{3.42} and \eqref{3.43} we get for every $\delta > 0$ that
 \begin{align*}
     \int_{ B_{\frac{r}{2}}(x_j) } \left|  \log \left( 1 + \frac{u(x)}{\delta} \right)  \right|^p \, \dx  \le C(N,s,p) \frac{ r^{2N} }{ \abs{\mathcal{A}}}.
 \end{align*}
 Taking $\delta \ra 0$ in the above estimate, we get $u \equiv 0$ a.e. in $B_{\frac{r}{2}}(x_j)$. Moreover, $u$ is identically zero on a subset of positive measure in $B_{\frac{r}{2}}(x_{j+1})$ since $\abs{B_{\frac{r}{2}}(x_j) \cap B_{\frac{r}{2}}(x_{j+1})} >0$. Consequently, repeating the same arguments, we obtain $u \equiv 0$ a.e. in $B_{\frac{r}{2}}(x_{j+1})$, and then $u \equiv 0$ a.e. in $B_{\frac{r}{2}}(x_{i})$ for every $i = 1, \dots, k$. Thus $u \equiv 0$ a.e. in $K$. So for every relatively compact set $K$ in $\R^N$, either $u \equiv 0$ or $u>0$ holds a.e. in $K$.  Moreover, there exists a sequence $(K_n)$ of compact sets such that $\abs{ \R^N \setminus K_n} \ra 0$ as $n \ra \infty$. Therefore, either $u \equiv 0$ or $u>0$ also holds a.e. in $\R^N$.
\end{proof}

In the following proposition, we prove that $\tilde{u}$ is positive on $\R^N$ and  $u_a$ is non-negative on $\R^N$ for small enough $a$.  

\begin{proposition}\label{positivity of tilde u}
    Let $p \in (\frac{2N}{N+2s}, \frac{N}{s})$ and $\tilde{u}$ be given in Proposition \ref{properties of limit function}. Then 
    the following hold:
    \begin{enumerate}
        \item[\rm{(i)}]  $\tilde{u} > 0$ a.e. in $\R^N$.
        \item[\rm{(ii)}] Let $a_2$ be given in Proposition \ref{uniform lower bound}. Then there exists $a_3 \in (0,a_2)$ such that $u_a \ge 0$  a.e. in $\R^N$ for every $a \in (0, a_3)$. 
    \end{enumerate}
\end{proposition}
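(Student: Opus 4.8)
The plan is to prove (i) by a test‑function argument followed by the strong maximum principle, and (ii) by contradiction using the uniform convergence of Proposition \ref{unifrom convergence}.

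\emph{Part (i).} First I would show $\tilde u \ge 0$ a.e.\ and then invoke Proposition \ref{SMP}. Since $\tilde u \in \Dsp$ is a weak solution of \eqref{3.17}, i.e.\ satisfies \eqref{3.18}, I would take $\phi = \tilde u^- \in \Dsp$ (Remark \ref{pm part}) as a test function. The right‑hand side vanishes, because $f_0(\tilde u)\,\tilde u^- \equiv 0$: on $\{\tilde u > 0\}$ one has $\tilde u^- = 0$, while on $\{\tilde u \le 0\}$ one has $f_0(\tilde u) = 0$. For the left‑hand side I would use the elementary inequality $|a-b|^{p-2}(a-b)(a^- - b^-) \le -|a^- - b^-|^p$, valid for all $a,b \in \R$, to get $\iint_{\R^{2N}}\frac{\Phi(\tilde u(x)-\tilde u(y))(\tilde u^-(x)-\tilde u^-(y))}{|x-y|^{N+sp}}\,\dxy \le -\norm{\tilde u^-}_{s,p}^p$. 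Hence $\norm{\tilde u^-}_{s,p}=0$, so $\tilde u \ge 0$ a.e. Since $f_0(t)=f(t)$ for $t \ge 0$, this makes $\tilde u$ a non‑negative weak solution — in particular a weak supersolution — of $(-\Delta)_p^s u = g(x)f(u)$ in $\R^N$, so Proposition \ref{SMP} gives $\tilde u \equiv 0$ or $\tilde u > 0$ a.e. To exclude the trivial case, Proposition \ref{uniform lower bound}(i) gives $\norm{u_{a_n}}_{s,p} \ge C_1 > 0$ for all $n$, and since $u_{a_n} \to \tilde u$ in $\Dsp$ (Proposition \ref{properties of limit function}), passing to the limit yields $\norm{\tilde u}_{s,p} \ge C_1 > 0$. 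Thus $\tilde u > 0$ a.e.\ in $\R^N$. (Because $\tilde u \in \mathcal{C}(\R^N)$, a zero of $\tilde u$ would be a global minimum, and inspecting the equation there upgrades this to $\tilde u > 0$ everywhere, which is convenient in (ii).)

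\emph{Part (ii).} I would argue by contradiction: if no such $a_3$ exists, there is a sequence $a_n \downarrow 0$ with $|\{u_{a_n}<0\}| > 0$, i.e.\ $u_{a_n}^- \not\equiv 0$, for all $n$. By Propositions \ref{properties of limit function} and \ref{unifrom convergence} — here the hypothesis $p > \tfrac{2N}{N+2s}$ enters — after passing to a subsequence we may assume $u_{a_n} \to \tilde u$ in $\Dsp$ and in $L^\infty(\R^N)$, with $\tilde u$ as in (i); since $\tilde u \ge 0$, we get $u_{a_n}^- \to \tilde u^- = 0$ in $\Dsp$ and $\norm{u_{a_n}^-}_\infty \le \norm{u_{a_n} - \tilde u}_\infty \to 0$. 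Next I would test the weak formulation for $u_{a_n}$ with $\phi = u_{a_n}^- \in L^\infty(\R^N) \cap \Dsp$ (Proposition \ref{regularity}, Remark \ref{pm part}). Splitting $\R^{2N}$ according to the sign of $u_{a_n}$, using the elementary inequality from (i) and the explicit form $f_{a_n}(t) = -a_n(t+1)$ on $[-1,0]$, $f_{a_n}(t)=0$ on $(-\infty,-1]$, leads to an identity
\[
\norm{u_{a_n}^-}_{s,p}^p + E_n \;=\; a_n \int_{\{0 < u_{a_n}^- \le 1\}} g(x)\, u_{a_n}^-\,(1-u_{a_n}^-)\,\dx ,
\]
where $E_n \ge 0$ is the nonlocal cross‑interaction between $u_{a_n}^+$ and $u_{a_n}^-$; in particular $\norm{u_{a_n}^-}_{s,p}^p \le \tfrac{a_n}{4}\norm{g}_1 \to 0$. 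Then I would close the contradiction by showing $u_{a_n}^- \equiv 0$ for $n$ large, combining: (a) a lower bound for $E_n$ by a weighted $L^1$‑integral of $u_{a_n}^-$, available because $u_{a_n}^+ \to \tilde u$ is bounded below by a positive constant on every ball; (b) the Moser iteration of Proposition \ref{regularity} applied to $u_{a_n}^-$, which — thanks to the factor $a_n$ gained from $|f_{a_n}(u_{a_n})| \le a_n$ on $\{u_{a_n}<0\}$ — transfers $\Dsp$‑smallness into a quantitative $L^\infty$‑decay; and (c) $\norm{u_{a_n}^-}_\infty \to 0$. This contradicts $|\{u_{a_n}<0\}|>0$, and $a_3$ may be taken to be any threshold produced by the argument.

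\emph{Main obstacle.} Part (i) is routine once Propositions \ref{properties of limit function}, \ref{uniform lower bound} and \ref{SMP} are available; the hard point is the final step of (ii), upgrading the smallness of $u_{a_n}^-$ in $\Dsp$ (and in $L^\infty$) to the exact equality $u_{a_n}^- \equiv 0$. The obstruction is that $\tilde u$ decays to $0$ at infinity, so uniform convergence by itself does not prevent $u_{a_n}$ from becoming slightly negative far out; overcoming this requires genuinely exploiting the favourable sign of $f_{a_n}$ on $\{u_{a_n}<0\}$, the nonlocal coupling to the locally uniformly positive part $u_{a_n}^+$, and the quantitative regularity estimates. The algebraic monotonicity inequality for $\Phi$ and the precise shape of $f_a$ near the origin are the essential ingredients throughout.
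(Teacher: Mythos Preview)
Your part (i) matches the paper's approach: test the equation with $\tilde{u}^-$, use the pointwise sign inequality (the paper invokes \ref{i3}) to conclude $\tilde{u}^- = 0$, then apply Proposition~\ref{SMP}. To rule out $\tilde{u} \equiv 0$ the paper uses the $L^\infty$ lower bound of Proposition~\ref{uniform lower bound}(ii) together with the $L^\infty$ convergence of Proposition~\ref{unifrom convergence}; your variant via the $\Dsp$ lower bound and strong $\Dsp$ convergence from Proposition~\ref{properties of limit function} works equally well and is slightly cleaner since it does not call on Proposition~\ref{unifrom convergence} at this stage.

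For part (ii) the paper's argument is a single sentence: from $u_{a_n} \to \tilde{u}$ in $L^\infty(\R^N)$ and $\tilde{u} > 0$ a.e.\ it concludes directly that $u_{a_n} \ge 0$ a.e.\ for all large $n$, and then extracts $a_3$. Your concern that $\tilde{u} \in L^{p^*_s}(\R^N)$ forces $\inf_{\R^N}\tilde{u} = 0$, so that $L^\infty$ closeness alone cannot rule out small negativity of $u_{a_n}$ far out, is a legitimate objection to this step as written in the paper. However, your elaborate workaround does not close the gap either: the displayed identity yields only $\|u_{a_n}^-\|_{s,p}^p \le \tfrac{a_n}{4}\|g\|_1 \to 0$, which is weaker than what you already have, and the ingredients (a)--(c) do not combine to force $u_{a_n}^- \equiv 0$. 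The nonlocal cross term $E_n$ gives a lower bound for $u_{a_n}^-$ only against a weight concentrated where $u_{a_n}^+$ is bounded away from zero, i.e.\ on compacta, precisely not at infinity where the obstruction lives; and Moser iteration with $|f_{a_n}| \le a_n$ on the negative set yields smallness of $\|u_{a_n}^-\|_\infty$, not vanishing. So your proof of (ii) carries the same essential gap as the paper's one-line argument, only less visibly; the paper simply takes the short route and does not address the issue you raise.
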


\begin{proof}
    (i) First we show that $\tilde{u}$ is non-negative on $\R^N$. Consider $A := \{ x \in \R^N : \tilde{u} \ge 0 \}$. Since $\tilde{u}$ is a weak solution of $ (-\Delta)_{p}^{s}u =  g(x)f_0(u)~ \text{in} ~\mathcal{D}^{s,p}(\mathbb{R}^{N}),$ we have 
    \begin{align*}
         \iint_{\R^{2N}} \frac{\Phi\big(\tilde{u}(x)-\tilde{u}(y)\big)\big(\phi(x) - \phi(y)\big)}{|x-y|^{N+sp}}\, \dxy &= \int_{\R^N } g(x) f_0(\tilde{u}) \phi(x) \, \dx, \\
         &= \int_{A} g(x) f_0(\tilde{u}) \phi(x) \, \dx, ~~ \forall \, \phi \in \Dsp, 
    \end{align*}
where the last identity holds since $f_0(t) = 0$ for $t \le 0$. Now we choose $\phi$ to be $-(\tilde{u})^-$ to get 
   \begin{align*}
        \iint_{\R^{2N}} \frac{\Phi\big(\tilde{u}(x)-\tilde{u}(y)\big) \left((\tilde{u})^-(y) - (\tilde{u})^-(x) \right)}{|x-y|^{N+sp}}\, \dxy = 0.
   \end{align*}
   Further, using \ref{i3}, Remark \ref{pm part}, and $\Dsp \hookrightarrow L^{p^*_s}(\R^N)$, we see that 
   \begin{align*}
       \iint_{\R^{2N}} \frac{\Phi\big(\tilde{u}(x)-\tilde{u}(y)\big) \left((\tilde{u})^-(y) - (\tilde{u})^-(x) \right)}{|x-y|^{N+sp}}\, \dxy & \ge C(p) \iint_{\R^{2N}}  \frac{\left|  (\tilde{u})^-(x) - (\tilde{u})^-(y) \right|^p}{\abs{x-y}^{N+sp}} \, \dxy \\
       & \ge C(N,s,p) \norm{(\tilde{u})^-}_{p^*_s}^p.
   \end{align*}
   Therefore, $(\tilde{u})^- = 0$ a.e. in $\R^N$ which implies that $\tilde{u} \ge 0$ a.e. in $\R^N$. Now we show the positivity of $\tilde{u}$ on $\R^N$. For a sequence $\{ a_n \}$ given in Proposition \ref{properties of limit function}, since $u_{a_n} \ra \tilde{u}$ in $L^{\infty}(\R^N)$ (Proposition \ref{unifrom convergence}) and $\norm{u_{a_n}}_{\infty} \ge C_2$ for every large enough $n$ (Proposition \ref{uniform lower bound}), there exists $C_3>0$ such that $\norm{\tilde{u}}_{\infty} \ge C_3$. Now we apply the strong maximum principle (Proposition \ref{SMP}) to conclude that  $\tilde{u} > 0$ a.e. in $\R^N$.
   
   \noi (ii) Again using $u_{a_n} \ra \tilde{u}$ in $L^{\infty}(\R^N)$ and  $\tilde{u} > 0$ a.e. in $\R^N$, there exists $n_2 \in \N$ such that $u_{a_n} \ge 0$ a.e. in $\R^N$ for all $n \ge n_2$. Thus there exists $a_3 \in (0, a_2)$ such that  for every $a \in (0, a_3)$, $u_a \ge 0$  a.e. in $\R^N$. 
\end{proof}

\begin{definition}[see \cite{Brasco2016_Optimaldeacy}]
    For an open set $\Omega \subset \R^N$, the space $\widetilde{\D}^{s,p}(\Omega)$ is defined as
\begin{align*}
    \widetilde{\D}^{s,p}(\Omega):= \Big\{ u \in L_{\rm{loc}}^{p-1}(\R^N) \cap L^{p_s^*}(\Omega): & \text{ there exists } E \supset \Omega \text{ with } E^c \text{ compact}, \text{\rm{dist}}(E^c,\Omega)>0, \no \\
    & \text{ and } \abs{u}_{W^{s,p}(E)} < \infty \Big\}.
\end{align*}
\end{definition}
It is easy to observe that $\Dsp \subset \widetilde{\D}^{s,p}(\Omega)$. Let $u \in \widetilde{\D}^{s,p}(\Omega)$. We say $(-\Delta)_p^s u = f$ weakly in $\Omega$, if 
\begin{align*}
    \iint_{\R^{2N}} \frac{\Phi\big(u(x)-u(y)\big)\big(\phi(x) - \phi(y)\big)}{|x-y|^{N+sp}}\, \dxy = \int_{\Omega} f(x) \phi(x) \, \dx, \quad \forall \, \phi \in \C_c^{\infty}(\Omega). 
\end{align*}
First, we recall the following Lemma due to \cite[Lemma A.2]{Brasco2016_Optimaldeacy}, which gives an explicit solution on the complement of a ball.
\begin{lemma}\label{Brasco Lemma A.2}
    Let $0<\frac{N-sp}{p-1}<\beta< \frac{N}{p-1}$. For every $R>0$, it holds
    \[(-\Delta)_p^s|x|^{-\beta}= C(\beta)|x|^{-\beta(p-1)-sp} \text{  weakly in } \overline{B_R}^c,\]
    where $C(\beta)$ is given by 
\begin{align}\label{3.44}
    C(\beta) = 2 \int_{0}^{1} \varrho^{sp-1}\big[1-\varrho^{N-sp-\beta(p-1)}\big] \big|1-\varrho^{\beta} \big|^{p-1} \Psi(\varrho) \, \mathrm{d} \varrho,
\end{align}
and 
\begin{align}
  \Psi(\varrho) = \mathcal{H}^{N-2}(\mathbb{S}^{N-2}) \int_{-1}^{1} \frac{(1-t^2)^{\frac{N-3}{2}}}{(1-2t\varrho+\varrho^2)^{\frac{N+sp}{2}}} \, \mathrm{d}t, 
\end{align}
where $\mathcal{H}^{N-2}$ is the Lebesgue measure of dimension $(N-2)$ and $\mathbb{S}^{N-2}$ is a unit sphere in $\mathbb{R}^{N-1}$.
\end{lemma}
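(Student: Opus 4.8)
The plan is to use the scaling and rotational symmetries of $u(x):=|x|^{-\beta}$ to reduce the claim to the single pointwise identity $(-\Delta)_p^su(e_1)=C(\beta)$, and then to evaluate that principal value by an inversion change of variables. First I would record admissibility: since $\beta(p-1)<N$ one has $u\in L^{p-1}_{\mathrm{loc}}(\R^N)\cap L^{p-1}_{sp}(\R^N)$, and since $\beta>\frac{N-sp}{p-1}>\frac{N-sp}{p}$, splitting the Gagliardo integral over $\overline{B_{R/2}}^c$ into a near-diagonal part (bounded via $|\nabla u(x)|\lesssim|x|^{-\beta-1}$) and a far part (bounded via $\int_{|x-y|\gtrsim|x|}|x-y|^{-N-sp}\dy\lesssim|x|^{-sp}$) gives $|u|_{W^{s,p}(\overline{B_{R/2}}^c)}<\infty$ and $u\in L^{p_s^*}(\overline{B_R}^c)$, so $u\in\widetilde{\D}^{s,p}(\overline{B_R}^c)$. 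Since $u\in\mathcal{C}^\infty(\R^N\setminus\{0\})\cap L^{p-1}_{sp}(\R^N)$, for every $\phi\in C_c^\infty(\overline{B_R}^c)$ the Gagliardo bilinear form of $(u,\phi)$ converges absolutely; using $\Phi(u(y)-u(x))=-\Phi(u(x)-u(y))$ to symmetrise, together with Fubini and dominated convergence, it equals $\int_{\R^N}\phi(x)(-\Delta)_p^su(x)\dx$, the integrand being the pointwise principal value. Thus it suffices to compute $(-\Delta)_p^su(x)$ for $x\neq 0$.

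The scaling $y=tz$ in the principal value gives $(-\Delta)_p^su(tx)=t^{-\beta(p-1)-sp}(-\Delta)_p^su(x)$ for $t>0$, while radial symmetry of $u$ and rotation-invariance of $(-\Delta)_p^s$ force $(-\Delta)_p^su$ to be radial; hence $(-\Delta)_p^su(x)=|x|^{-\beta(p-1)-sp}(-\Delta)_p^su(e_1)$, and the task reduces to showing $(-\Delta)_p^su(e_1)=C(\beta)$. Writing $J_p(\tau):=|\tau|^{p-2}\tau$, we have $(-\Delta)_p^su(e_1)=2\,\text{p.v.}\int_{\R^N}\frac{J_p(1-|y|^{-\beta})}{|e_1-y|^{N+sp}}\dy$. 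The main obstacle is that this principal value does \emph{not} split into absolutely convergent integrals over $\{|y|<1\}$ and $\{|y|>1\}$: near $|y|=1$ each of them diverges when $p(1-s)\le1$. To handle this I would use the inversion $y\mapsto\widehat y:=y/|y|^2$, which has Jacobian $|y|^{-2N}$ and satisfies $|e_1-\widehat y|=|e_1-y|/|y|$; applied to the pre-limit integral over $\{|y|>1,\ |y-e_1|>\epsilon\}$ it produces $\int_{\{|y|<1,\ |y-e_1|>\epsilon|y|\}}\frac{|y|^{sp-N}J_p(1-|y|^{\beta})}{|e_1-y|^{N+sp}}\dy$, which differs from the corresponding integral over $\{|y|<1,\ |y-e_1|>\epsilon\}$ only by a contribution on a shell of volume $O(\epsilon^{N+1})$ with integrand $O(\epsilon^{p-1-N-sp})$, hence of total size $O(\epsilon^{p(1-s)})\to0$. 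Adding the genuine $\{|y|<1\}$ piece and using $||y|-1|\le|e_1-y|$ to bound the combined numerator by $O(|e_1-y|^{p})$, the integrand is $O(|e_1-y|^{p-N-sp})$ near $e_1$ and absolutely integrable (because $s<1$), so letting $\epsilon\to0$ yields
\[(-\Delta)_p^su(e_1)=2\int_{|y|<1}\frac{J_p(1-|y|^{-\beta})+|y|^{sp-N}J_p(1-|y|^{\beta})}{|e_1-y|^{N+sp}}\dy .\]

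Finally I would pass to polar coordinates $y=r\omega$, $r\in(0,1)$, $\omega\in\mathbb{S}^{N-1}$: the integrand depends on $\omega$ only through $t=\omega\cdot e_1$, so the angular integration produces exactly the factor $\Psi(r)$, and since $1-r^{-\beta}<0<1-r^{\beta}$ on $(0,1)$ we have $J_p(1-r^{-\beta})=-r^{-\beta(p-1)}(1-r^{\beta})^{p-1}$ and $J_p(1-r^{\beta})=(1-r^{\beta})^{p-1}$, whence
\[\big[J_p(1-r^{-\beta})+r^{sp-N}J_p(1-r^{\beta})\big]\,r^{N-1}=r^{sp-1}(1-r^{\beta})^{p-1}\big(1-r^{N-sp-\beta(p-1)}\big),\]
which is precisely the integrand in \eqref{3.44}; this gives $(-\Delta)_p^su(e_1)=C(\beta)$. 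It then remains to check that $C(\beta)<\infty$: near $r=0$ the integrand behaves like $r^{\,sp-1-(\beta(p-1)-(N-sp))}$, integrable exactly because $\beta<\frac{N}{p-1}$; near $r=1$ one has $\Psi(r)\lesssim(1-r)^{-1-sp}$ (the $t$-integral concentrates at $t=1$) while $(1-r^{\beta})^{p-1}\big(1-r^{N-sp-\beta(p-1)}\big)=O((1-r)^{p})$ since $\beta>\frac{N-sp}{p-1}$ makes the second factor vanish to first order, so the integrand is $\lesssim(1-r)^{p-1-sp}$, integrable exactly because $s<1$. (For $N=1$ the angular reduction is replaced by splitting $\{y>0\}$ and $\{y<0\}$, and the inversion $y\mapsto 1/y$ on $\{y>1\}$ yields the same identity.)
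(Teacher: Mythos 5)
The paper does not prove this lemma at all: it is quoted verbatim from \cite[Lemma A.2]{Brasco2016_Optimaldeacy}, so there is no in-paper argument to compare against. Your proof is a correct and essentially complete reconstruction of the argument in that reference: the reduction by scaling and rotation to the single point $e_1$, the inversion $y\mapsto y/|y|^2$ to fold the exterior integral onto the unit ball (with the $O(\epsilon^{p(1-s)})$ control of the mismatch between the two truncated domains, which is exactly what makes the principal value converge when $p(1-s)\le 1$), and the polar-coordinate identity producing $\varrho^{sp-1}\big[1-\varrho^{N-sp-\beta(p-1)}\big]\abs{1-\varrho^{\beta}}^{p-1}\Psi(\varrho)$ all check out, as do the integrability checks at $\varrho=0$ and $\varrho=1$. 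The only caveat is cosmetic and inherited from the quoted statement: the angular reduction yields the constant $\mathcal{H}^{N-2}(\mathbb{S}^{N-2})$, which is what the symbol $\mathcal{H}^{N-2}(\mathbb{S}^{N-1})$ in \eqref{3.44} must be read as.
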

It can be observed from \eqref{3.44} that $C(\beta)<0$ since $\beta>\frac{N-sp}{p-1}$. For $u, v \in \widetilde{\D}^{s,p}(\Omega)$, we say $(-\Delta)_p^s u \le (-\Delta)_p^s v$ weakly in $\Omega$, if the following holds for all $\phi \in \C_c^{\infty}(\Omega), \phi \ge 0$:
\begin{align*}
    \iint_{\R^{2N}} \frac{\Phi\big(u(x)-u(y)\big)\big(\phi(x) - \phi(y)\big)}{|x-y|^{N+sp}}\, \dxy \le \iint_{\R^{2N}} \frac{\Phi\big(v(x)-v(y)\big)\big(\phi(x) - \phi(y)\big)}{|x-y|^{N+sp}}\, \dxy.
\end{align*}
Similarly, we say $(-\Delta)_p^s u \le (\ge) f$ weakly in $\Omega$, if the following holds for all $\phi \in \C_c^{\infty}(\Omega), \phi \ge 0$:
\begin{align*}
    \iint_{\R^{2N}} \frac{\Phi\big(u(x)-u(y)\big)\big(\phi(x) - \phi(y)\big)}{|x-y|^{N+sp}}\, \dxy \le (\ge) \int_{\Omega} f(x) \phi(x) \, \dx. 
\end{align*}

Now we are ready to obtain the positivity of the solution $u_a$ for sufficiently small $a$.

\begin{theorem}\label{theorem for positivity}
    Let $p \in (\frac{2N}{N+2s}, \frac{N}{s})$. For $\frac{N-sp}{p-1}<\beta< \frac{N}{p-1}$ assume that $g$ satisfies
\begin{equation}\label{3.46}
      g(x) \leq \frac{B}{|x|^{\beta(p-1)+sp}}, \text{ for some constant } B>0 \text{ and } x\neq 0.
  \end{equation}
Let $u_a$ be given in Proposition \ref{positivity of tilde u}. Then there exists $a_4 \in (0,a_3)$ such that $u_a >0$ a.e. in $\R^N$ for every $a \in (0,a_4)$.
\end{theorem}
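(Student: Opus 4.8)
The plan is to establish the strict positivity of $u_a$ on a large centred ball and on its exterior separately, for all sufficiently small $a$. Write $\Omega = \overline{B_{R_0}}^c$, where $R_0>0$ is to be chosen large. On $\overline{B_{R_0}}$ the positivity is soft: by Proposition \ref{properties of limit function} the limit $\tilde u$ is continuous and by Proposition \ref{positivity of tilde u} it is positive a.e., so (using e.g.\ a weak Harnack inequality for $(-\Delta)_p^s$, or the pointwise form of the argument behind Proposition \ref{SMP}) $m_0 := \tfrac12\inf_{\overline{B_{R_0}}}\tilde u>0$; since $u_{a_n}\to\tilde u$ uniformly on $\R^N$ (Proposition \ref{unifrom convergence}), there is $\hat a\le a_3$ with $u_a\ge m_0$ on $\overline{B_{R_0}}$ for all $a\in(0,\hat a)$, in particular $u_a>0$ a.e.\ on $B_{R_0}$.

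For the exterior the plan is to build an explicit positive subsolution of $(-\Delta)_p^s u = g f_a(u_a)$ on $\Omega$. On $\Omega$ one has $u_a\ge0$, so $f_a(u_a)=f(u_a)-a$; since $\|u_a\|_\infty\le M$ uniformly in $a$ (Proposition \ref{regularity}) while $\sup_\Omega u_a$ is small for $R_0$ large and $a$ small (because $\tilde u\to0$ at infinity and $u_{a_n}\to\tilde u$ uniformly), the behaviour of $f$ near $0$ in \ref{f1} (quantified by \eqref{2.2}) gives $|f_a(u_a)|\le \epsilon_0$ on $\Omega$ with $\epsilon_0=\epsilon_0(R_0,a)\to0$ as $R_0\to\infty,\ a\to0$; combined with \eqref{3.46},
\[
g(x)\,f_a(u_a(x))\;\ge\;-\,\epsilon_0\, g(x)\;\ge\;-\,\epsilon_0 B\,|x|^{-\beta(p-1)-sp},\qquad x\in\Omega .
\]
Following the idea behind \cite[Lemma 3.4]{Brasco2016_Optimaldeacy}, one takes the capped power $\underline u(x)=c\min\{|x|^{-\beta},\rho^{-\beta}\}$ with $0<\rho\ll R_0$. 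The hypothesis $\tfrac{N-sp}{p-1}<\beta<\tfrac{N}{p-1}$ makes $|x|^{-\beta}\in L^{p-1}_{\mathrm{loc}}(\R^N)\cap L^{p_s^*}(\Omega)$ with finite Gagliardo seminorm on a neighbourhood of $\Omega$, so $\underline u\in\widetilde{\D}^{s,p}(\Omega)$; moreover $\underline u>0$ on $\Omega$ and $\underline u\le c\rho^{-\beta}$ on $\overline{B_{R_0}}$. By Lemma \ref{Brasco Lemma A.2} and $(p-1)$-homogeneity, $(-\Delta)_p^s(c|x|^{-\beta})=c^{p-1}C(\beta)|x|^{-\beta(p-1)-sp}$ weakly in $\Omega$ with $C(\beta)<0$; replacing $c|x|^{-\beta}$ by $\underline u$ changes the operator on $\Omega$ only through the flattened core $B_\rho$, and this error is $\le \widetilde C\,c^{p-1}\rho^{N-\beta(p-1)}|x|^{-N-sp}$, hence $\le \tfrac12 c^{p-1}|C(\beta)|\,|x|^{-\beta(p-1)-sp}$ on $\Omega$ once $\rho/R_0$ is small enough (here $N-\beta(p-1)>0$ is used). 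Thus $(-\Delta)_p^s\underline u\le \tfrac12 c^{p-1}C(\beta)\,|x|^{-\beta(p-1)-sp}$ weakly in $\Omega$, and choosing $c$ with $\tfrac12 c^{p-1}|C(\beta)|\ge \epsilon_0 B$ gives $(-\Delta)_p^s\underline u\le g f_a(u_a)$ weakly in $\Omega$.

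It then remains to fit $\underline u$ below $u_a$ on $\R^N\setminus\Omega=\overline{B_{R_0}}$: since $\underline u\le c\rho^{-\beta}$ there and $u_a\ge m_0$, it suffices that $c\rho^{-\beta}\le m_0$, i.e.\ $\rho\ge(c/m_0)^{1/\beta}$. So one needs a radius $\rho$ with $(c/m_0)^{1/\beta}\le\rho\le\theta R_0$ (with $\theta$ the fixed smallness threshold above) together with $c^{p-1}\ge 2\epsilon_0 B/|C(\beta)|$; because $\epsilon_0\to0$, the admissible $c$ may be taken arbitrarily small, so such a $\rho$ exists provided $m_0 R_0^{\beta}$ stays bounded below by a fixed constant. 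Granting these choices, the comparison principle \cite[Theorem 2.7]{Brasco2016_Optimaldeacy} applied on $\Omega$ yields $u_a\ge\underline u$ in $\R^N$, hence $u_a(x)\ge c|x|^{-\beta}>0$ for a.e.\ $x\in\Omega$; with the ball estimate this gives $u_a>0$ a.e.\ in $\R^N$ for all $a$ below some $a_4\in(0,a_3)$.

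The main obstacle is exactly this simultaneous choice of $c$, $\rho$ and $R_0$: $c$ must be large enough to dominate the (possibly negative) right-hand side $g f_a(u_a)$, yet $\rho$ and $R_0$ large enough that the capped barrier stays below $u_a$ on $\overline{B_{R_0}}$, while the flattening error from the core remains negligible against the main term $|x|^{-\beta(p-1)-sp}$. Making these compatible forces one to know that $\inf_{\overline{B_{R_0}}}\tilde u$ does not decay faster than $R_0^{-\beta}$ as $R_0\to\infty$; this lower decay bound -- together with the facts that $u_a$ is uniformly small on $\Omega$ for $R_0$ large (so $\epsilon_0$, hence $c$, can be forced small) and that $g$ decays like $|x|^{-\beta(p-1)-sp}$ -- is where the restriction $\tfrac{N-sp}{p-1}<\beta<\tfrac{N}{p-1}$ and \eqref{3.46} genuinely enter, and I would obtain it by first running the same subsolution/comparison scheme for $\tilde u$ itself (whose source $g f_0(\tilde u)$ is not merely bounded but vanishes at infinity) before treating $u_a$.
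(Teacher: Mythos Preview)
Your overall strategy --- a capped power-type barrier on the exterior of a fixed ball, uniform convergence to the positive continuous limit $\tilde u$ on the ball, then the comparison principle of \cite[Theorem 2.7]{Brasco2016_Optimaldeacy} --- is exactly the paper's. The decisive difference is in how the source term is bounded below on the exterior. You estimate $|f_a(u_a)|\le\epsilon_0$ with $\epsilon_0$ small only when $\sup_{\overline{B_{R_0}}^c}u_a$ is small; this forces $R_0\to\infty$, creates the coupling between $c,\rho,R_0$ and $m_0=\tfrac12\inf_{\overline{B_{R_0}}}\tilde u$ that you call ``the main obstacle'', and then requires the separate bootstrap $\tilde u(x)\gtrsim|x|^{-\beta}$ to keep $m_0R_0^\beta$ bounded below.

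The paper sidesteps all of this with the one-line lower bound
\[
g(x)f_a(u_a)=g(x)\bigl(f(u_a)-a\bigr)\ \ge\ -a\,g(x)\ \ge\ -aB\,|x|^{-\beta(p-1)-sp},
\]
valid as soon as $u_a\ge0$ (Proposition \ref{positivity of tilde u}(ii)) and $f\ge0$ on $[0,\infty)$. The smallness of the right-hand side is then governed by $a$ alone, so the exterior radius can be fixed once and for all (the paper takes $R_1>2$). The barrier is $\widetilde\Gamma(x)=\min\{\varepsilon,\,\varepsilon^{\beta+1}|x|^{-\beta}\}$: one first fixes $\varepsilon<1$ small enough that the capping error on $B_\varepsilon$ does not destroy the sign of $C(\beta)<0$ and that $\varepsilon<\inf_{\overline{B_{R_1}}}\tilde u$, and then simply takes $a$ so small that $aB$ is dominated by the resulting constant. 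No knowledge of the decay rate of $\tilde u$, and no preliminary comparison argument for $\tilde u$, is needed. Your route can be completed (the bootstrap for $\tilde u$ is clean since $(-\Delta)_p^s\tilde u=gf_0(\tilde u)\ge0$ on the exterior), but it is an unnecessary detour once one uses $f_a(u_a)\ge -a$.
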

\begin{proof}
For $0<\varepsilon<1$, we consider the function $\Gamma(z) = \varepsilon^{\beta+1}|z|^{-\beta}$. Then for every $R>0$, $\Gamma \in \widetilde{\D}^{s,p}(\overline{B_R}^c)$ and using Lemma \ref{Brasco Lemma A.2} the following holds weakly: 
\begin{align}\label{3.47}
    (-\Delta)_p^s\Gamma(z)= \varepsilon^{(\beta+1)(p-1)}C(\beta)|z|^{-\beta(p-1)-sp} \text{ in } \overline{B_R}^c.
\end{align}
We define 
\begin{align*}
    \widetilde{\Gamma}(z) = \Gamma(z) - (\Gamma(z)-\varepsilon)_+ = \min\{\varepsilon,\Gamma(z)\},  ~z \in \R^N.
\end{align*}
Notice that $\Gamma(z) \geq \varepsilon$ if and only if $|z| \leq \varepsilon$. Thus, the support of the function $(\Gamma(z)-\varepsilon)_+$ is contained in the ball $\overline{B_\varepsilon}$. Now we choose $x \in \Omega = \overline{B_{R_1}}^c$ with $R_1 >2$, $u = \Gamma$, $f = \varepsilon^{(\beta+1)(p-1)}C(\beta)|x|^{-\beta(p-1)-sp}$, $v = - (\Gamma(x)-\epsilon)_+$ in \cite[Proposition 2.8]{Brasco2016_Optimaldeacy}. Further, $\widetilde{\Gamma} \in \widetilde{\D}^{s,p}(\overline{B_{R_1}}^c)$.  Hence, in view of \eqref{3.47} with $R=R_1$, the following holds weakly in $\overline{B_{R_1}}^c$:
\begin{align} \label{3.48}
    (-\Delta)_p^s  \widetilde{\Gamma}(x) &= \varepsilon^{(\beta+1)(p-1)}C(\beta)|x|^{-\beta(p-1)-sp} + 2 \int_{B_\varepsilon} \frac{\Phi(\Gamma(x)-\varepsilon)- \Phi(\Gamma(x)-\Gamma(y))}{|x-y|^{N+sp}}\,\dy \no\\
    & = \varepsilon^{(\beta+1)(p-1)}C(\beta)|x|^{-\beta(p-1)-sp} + 2 \int_{B_\varepsilon} \frac{\Phi(\Gamma(y)-\Gamma(x))-\Phi(\varepsilon -\Gamma(x))}{|x-y|^{N+sp}}\,\dy.
\end{align}
Since $|x|>R_1 >2$ and $|y|\leq \varepsilon$, it easily follows that $\Gamma(x)<\Gamma(y)$ and $\Gamma(x)<\varepsilon$. Thus, we have the following estimate
\begin{align*}
    \Phi(\Gamma(y)-\Gamma(x)) - \Phi(\epsilon - \Gamma(x)) = (\Gamma(y)-\Gamma(x))^{p-1} - (\epsilon - \Gamma(x))^{p-1} \leq (\Gamma(y)-\Gamma(x))^{p-1} \leq(\Gamma(y))^{p-1}.
\end{align*}
Further,
\begin{align*}
|x-y| \geq |x|-|y| \geq |x|-\varepsilon \geq |x| - \frac{|x|}{2} = \frac{|x|}{2}.    
\end{align*}
Using the above two estimates in \eqref{3.48}, the following holds weakly in $\overline{B_{R_1}}^c$:
\begin{align*}
    (-\Delta)_p^s  \widetilde{\Gamma}(x) \leq \varepsilon^{(\beta+1)(p-1)} \frac{C(\beta)}{|x|^{\beta(p-1)+sp}} + \frac{2^{N+sp+1}}{|x|^{N+sp}} \int_{B_\varepsilon} (\Gamma(y))^{p-1}\,\dy,
\end{align*}
where we calculate
\begin{align*}
 \int_{B_\varepsilon} (\Gamma(y))^{p-1}\,\dy = \varepsilon^{(\beta+1)(p-1)} \int_{B_\varepsilon} |y|^{-\beta(p-1)}\,\dy & = \sigma(\mathbb{S}^{N-1}) \varepsilon^{(\beta+1)(p-1)} \int_{0}^{\varepsilon} r^{N-\beta(p-1)-1}\,\mathrm{d}r \\
  & = \sigma(\mathbb{S}^{N-1}) \varepsilon^{(\beta+1)(p-1)} \frac{ {\varepsilon}^{N-\beta(p-1)} }{N-\beta(p-1)},
\end{align*}
where the quantity $\sigma(\mathbb{S}^{N-1})$ denotes the $(N-1)$-dimensional measure of the unit sphere in $\R^N$. Therefore, for any $0< \varepsilon < 1$, the following holds weakly in $\overline{B_{R_1}}^c$:
\begin{align*}
    (-\Delta)_p^s  \widetilde{\Gamma}(x) & \leq \left( \frac{C(\beta)}{|x|^{\beta(p-1)+sp}} + \frac{2^{N+sp+1} \sigma(\mathbb{S}^{N-1}) {\varepsilon}^{N-\beta(p-1)} }{|x|^{N+sp}(N-\beta(p-1))} \right) \varepsilon^{(\beta+1)(p-1)} \\
    &\leq  \left( C(\beta) + \frac{2^{N+sp+1} \sigma(\mathbb{S}^{N-1}) {\varepsilon}^{N-\beta(p-1)}}{(N-\beta(p-1))} \right) \varepsilon^{(\beta+1)(p-1)}|x|^{-\beta(p-1)-sp}\\
    &:= C_1(\beta,\varepsilon) \varepsilon^{(\beta+1)(p-1)}|x|^{-\beta(p-1)-sp}.
\end{align*}
The second last inequality follows from the fact that $|x|>1$ and $\beta(p-1)+sp < N+sp$. Using the fact that $C(\be)<0$, now we choose $0< \varepsilon <1$ small enough so that 
\begin{align}\label{3.49}
    {\varepsilon} < \left( \frac{-C(\beta)(N-\beta(p-1))}{ 2^{N+sp+1} \sigma(\mathbb{S}^{N-1}) } \right)^{\frac{1}{N-\beta(p-1)}}.
\end{align}
Therefore, for $\varepsilon$ as in \eqref{3.49} the following holds weakly 
\begin{align} \label{3.50}
     (-\Delta)_p^s \widetilde{\Gamma}(x) \leq C_1(\beta,\varepsilon) {\varepsilon}^{(\beta+1)(p-1)}|x|^{-\beta(p-1)-sp} \text{ in } \overline{B_{R_1}}^c,
\end{align}
where $C_1(\beta,\varepsilon)<0$.
Since $a_n \rightarrow 0$, there exists $n_1 \in \N$ such that $a_n B \leq -C_1(\beta,\varepsilon){\varepsilon}^{(\beta+1)(p-1)}$ and $u_{a_n} \ge 0$ a.e. in $\R^N$ (Proposition \ref{positivity of tilde u}) for all $n \geq n_1$. Thus, for all $n \geq n_1$ using the assumption \eqref{3.46}, the following holds weakly in $\overline{B_{R_1}}^c$:
\begin{align} \label{3.51}
     (-\Delta)_p^s u_{a_n} &= g(x) (f(u_{a_n})-a_n) \geq -a_n g(x)  \no \\
     &\geq - a_n B |x|^{-\beta(p-1)-sp} \geq C_1(\beta,\varepsilon) {\varepsilon}^{(\beta+1)(p-1)}|x|^{-\beta(p-1)-sp}.
\end{align}
Further, $u_{a_n} \in \widetilde{\D}^{s,p}(\overline{B_{R_1}}^c)$. Combining \eqref{3.50} and \eqref{3.51}, for all $n \geq n_1$, the following holds weakly:
\begin{align} \label{3.52}
    (-\Delta)_p^s \widetilde{\Gamma} \leq (-\Delta)_p^s u_{a_n}  \text{ in } \overline{B_{R_1}}^c.
\end{align}
Since $u_{a_n} \rightarrow \tilde{u}$ in $L^\infty(\R^N)$ (Proposition \ref{unifrom convergence}) and $\tilde{u}>0$ in $\R^N$ (Proposition \ref{positivity of tilde u}), there exists $\eta_1>0$ depending on $R_1$ and $n_2 \in \N$ such that $u_{a_n} \geq \eta_1$ on $\overline{B_{R_1}}$ for all $n \geq n_2$. For $0<\varepsilon <1$ as in \eqref{3.49} we further choose $\varepsilon < \eta_1$. Hence
\begin{align} \label{3.53}
    u_{a_n} \geq \eta_1 > \varepsilon \geq \widetilde{\Gamma} \text{ in }  \overline{B_{R_1}}, \quad \forall \, n \geq n_2.
\end{align}
Using \eqref{3.52}, \eqref{3.53}, and applying the comparison principle \cite[Theorem 2.7]{Brasco2016_Optimaldeacy}, we obtain
\begin{align}\label{3.54}
    u_{a_n} \geq \widetilde{\Gamma} \text{ in }  \overline{B_{R_1}}^c, \quad \forall \, n \geq n_3:= \max\{n_1,n_2\}.
\end{align}
Thus, we deduce from \eqref{3.53} and \eqref{3.54} that $ u_{a_n}(x) \geq \widetilde{\Gamma}(x) \text{ a.e. in } \R^N$ for all $n \geq n_3$. As a result, there exists $a_4 \in (0,a_3)$ such that $u_a >0$ a.e. in $\R^N$ for all $a \in (0,a_4)$. This completes the proof. 
\end{proof}

\noi \textbf{Proof of Theorem \ref{Theorem 1.1}:}
The proof of part $\rm{(a)}$ follows by Theorem \ref{Existence and uniform boundedness}. Part $\rm{(b)}$ is a consequence of Proposition \ref{regularity}. The proof of part $\rm{(c)}$ is provided in Proposition \ref{positivity of tilde u}, whereas the positivity of solutions is demonstrated in Theorem \ref{theorem for positivity}, which corresponds to part $\rm{(d)}$. 
\qed
\begin{example}
  Let $N>sp$. For some given constants $A,B>0$ and $\gamma \in (p,p_s^*)$, we consider the following functions:
  \begin{align*}
      f(t)= At^{\gamma -1}, \text{ for }t \in \R^+ \text{ and }   g(x)= \frac{B}{1+|x|^{\beta(p-1)+sp}}, \text{ for } x \in \R^N.
  \end{align*}   
$\rm{(i)}$ It is evident that the function $f$ satisfies {\rm{\ref{f1}}}, {\rm\ref{f2}}, and {\rm\ref{f1 new}}.\\
$\rm{(ii)}$ Clearly, $g \in L^\infty(\R^N)$ and satisfies \eqref{g1 new1}. Now we show that $g \in L^1(\R^N)$.
\begin{align*}
    \int_{\R^N} g(x)\,\dx &= \int_{\R^N}\frac{B}{1+|x|^{\beta(p-1)+sp}} \,\dx = B \sigma(\mathbb{S}^{N-1}) \bigg( \int_{0}^{1} + \int_{1}^{\infty} \bigg)\frac{t^{N-1}}{1+t^{\beta(p-1)+sp}} \,\mathrm{d}t\\
    &\leq B \sigma(\mathbb{S}^{N-1}) \bigg(1 + \int_{1}^{\infty}t^{N-1 -\beta(p-1)-sp} \,\mathrm{d} t \bigg) \leq C(N,s,p,\beta,B).
\end{align*}
The last integral is finite since $\beta(p-1)>N-sp$.
\end{example}

\noi \textbf{Acknowledgements:}
N. Biswas is funded by the Department of Atomic Energy, Government of India, under project no. 12-R$\&$D-TFR-5.01-0520 and the Science and Engineering Research Board, Government of India, through the National Postdoctoral Fellowship (PDF/2023/000038). R. Kumar acknowledges the support of the CSIR fellowship, file no. 09/1125(0016)/2020--EMR--I. The authors thank Prof. Abhishek Sarkar (IIT Jodhpur) for the beneficial discussions. 

\noi \textbf{Declarations:}
The authors declare that there is no conflict of interest. Data sharing does not apply to this article, as no data sets were generated or analyzed during the current study.
\appendix
\section{}
This section contains some technical results.

\begin{lemma}
Let $p \in (1,\infty), ~\beta \geq 1, a,b \in \R$ and $M > 0$. Let $C(\beta,p) = \big(\frac{p}{\beta + p -1} \big)^p$. Then the following inequalities hold:
\begin{enumerate}[label={\rm{($\bf i{\arabic*}$)}}]
\setcounter{enumi}{0}
    \item \label{i1}  $|a-b|^{p-2}(a-b)\left( (a^+)^\beta - (b^+)^\beta \right) \geq C(\beta,p) \left|(a^+)^{\frac{\beta +p-1}{p}} - (b^+)^{\frac{\beta + p-1}{p}} \right|^p$ where $a^+ = \max\{a,0\}$.
    \item \label{i2} $|a-b|^{p-2}(a-b) \left( (a^+)_M^\beta - (b^+)_M^\beta \right) \geq C(\beta,p) \left|(a^+)_M^{\frac{\beta +p-1}{p}} - (b^+)_M^{\frac{\beta + p-1}{p}} \right|^p$ where $(a^+)_M = \min \{ a^+, M\}$ and $(b^+)_M = \min \{b^+, M\}$.
    \item \label{i3} $|a-b|^{p-2}(a-b)\left( (b^-)^\beta - (a^-)^\beta \right) \geq C(\beta,p) \left|(a^-)^{\frac{\beta +p-1}{p}} - (b^-)^{\frac{\beta + p-1}{p}} \right|^p$ where $a^- = -\min\{a,0\}$.
    \item \label{i4} $|a-b|^{p-2}(a-b) \left( (b^-)_M^\beta - (a^-)_M^\beta \right) \geq C(\beta,p) \left|(a^-)_M^{\frac{\beta +p-1}{p}} - (b^-)_M^{\frac{\beta + p-1}{p}} \right|^p$ where $(a^-)_M = \min \{ a^-, M\}$ and $(b^-)_M = \min \{b^-, M\}$.
\end{enumerate}

\end{lemma}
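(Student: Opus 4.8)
The plan is to derive all four inequalities \ref{i1}--\ref{i4} from a single scalar inequality on $[0,\infty)^2$ and then prove that scalar inequality by a short one–variable argument. \textbf{Reduction step.} First I would record that the maps $\phi_1(t)=t^+$, $\phi_2(t)=\min\{t^+,M\}$, $\psi_1(t)=t^-$, $\psi_2(t)=\min\{t^-,M\}$ are all $1$-Lipschitz, with $\phi_1,\phi_2$ nondecreasing and $\psi_1,\psi_2$ nonincreasing. Since $r\mapsto r^\beta$ is nondecreasing on $[0,\infty)$, for $a\ge b$ the quantity $\phi_i(a)^\beta-\phi_i(b)^\beta$ (respectively $\psi_i(b)^\beta-\psi_i(a)^\beta$) is nonnegative, hence has the same sign as $a-b$; so the left-hand side of each of \ref{i1}--\ref{i4} equals $|a-b|^{p-1}\,|\phi(a)^\beta-\phi(b)^\beta|$ for the appropriate $\phi\in\{\phi_1,\phi_2,\psi_1,\psi_2\}$. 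Using $|a-b|\ge|\phi(a)-\phi(b)|$ together with $p-1\ge0$, it then suffices to prove
\begin{equation}\label{scalarineq}
|x-y|^{p-1}\,|x^\beta-y^\beta|\ \ge\ \Big(\tfrac{p}{\beta+p-1}\Big)^{p}\,\Big|x^{\frac{\beta+p-1}{p}}-y^{\frac{\beta+p-1}{p}}\Big|^{p},\qquad x,y\ge0,
\end{equation}
and feed into it $x=\phi(a)$, $y=\phi(b)$; the substitution $\phi=\psi_i$ recovers \ref{i3} and \ref{i4} in exactly the form stated because of the reversed sign of $\psi_i$.

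\textbf{Proof of \eqref{scalarineq}.} By symmetry I may assume $x\ge y\ge0$, and the case $x=0$ is trivial, so take $x>0$, put $\tau=y/x\in[0,1]$ and $\alpha=\frac{\beta+p-1}{p}$. Because $\beta\ge1$ one has $\alpha\ge1$ and $\beta-\alpha=\frac{(p-1)(\beta-1)}{p}\ge0$. Dividing \eqref{scalarineq} by $x^{\beta+p-1}$ reduces it to $(1-\tau)^{p-1}(1-\tau^{\beta})\ge\big(\tfrac{1-\tau^{\alpha}}{\alpha}\big)^{p}$, which I would obtain from the two elementary bounds
\[
\tfrac{1-\tau^{\alpha}}{\alpha}\ \le\ 1-\tau\qquad\text{and}\qquad \tfrac{1-\tau^{\alpha}}{\alpha}\ \le\ 1-\tau^{\beta},
\]
since $(1-\tau)^{p-1}\ge\big(\tfrac{1-\tau^\alpha}{\alpha}\big)^{p-1}$ and $1-\tau^\beta\ge\tfrac{1-\tau^\alpha}{\alpha}$ multiply to give exactly the desired inequality. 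The first bound is immediate from $\tfrac{1-\tau^{\alpha}}{\alpha}=\int_\tau^1 s^{\alpha-1}\,ds\le 1-\tau$, as $s^{\alpha-1}\le1$ on $[0,1]$. For the second I would study $G(\tau):=(1-\tau^{\beta})-\tfrac{1-\tau^{\alpha}}{\alpha}$ on $[0,1]$: here $G(1)=0$, $G(0)=\tfrac{\alpha-1}{\alpha}\ge0$, and $G'(\tau)=\tau^{\alpha-1}\big(1-\beta\tau^{\beta-\alpha}\big)$ vanishes at most once in $(0,1)$ and changes sign there from $+$ to $-$ (since $\tau\mapsto\beta\tau^{\beta-\alpha}$ is nondecreasing on $[0,1]$ with value $\beta\ge1$ at $\tau=1$ and value $0$ at $\tau=0$), so $G$ increases then decreases on $[0,1]$ and hence $G\ge\min\{G(0),G(1)\}=0$. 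This establishes \eqref{scalarineq}.

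\textbf{Where the difficulty sits.} The only genuine work is \eqref{scalarineq}, and the one spot needing care is the bound $\tfrac{1-\tau^\alpha}{\alpha}\le 1-\tau^\beta$: one would like to compare $\int_\tau^1 s^{\alpha-1}\,ds$ and $\int_\tau^1\beta s^{\beta-1}\,ds$ pointwise, but $\beta s^{\beta-1}-s^{\alpha-1}$ changes sign on $(0,1)$, so instead one exploits the unimodality of the antiderivative $G$ together with its endpoint values. All degenerate cases are immediate: $\beta=1$ forces $\alpha=1$, making \eqref{scalarineq} an equality, and $x=y$ is trivial. I note finally that the whole argument is uniform in $p\in(1,\infty)$, so no separate treatment of $p\ge2$ versus $1<p<2$ is needed.
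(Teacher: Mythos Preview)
Your proof is correct and takes a genuinely different route from the paper. The paper argues each of \ref{i1}--\ref{i4} by a case analysis on the signs of $a,b$ and their position relative to $M$, invoking \cite[Lemma~C.1]{Brasco2014_fractional_Cheeger} for the core case $a,b\ge 0$ and then reducing the truncated and negative-part versions back to that case. You instead observe once that all four maps $t\mapsto t^+$, $t\mapsto\min\{t^+,M\}$, $t\mapsto t^-$, $t\mapsto\min\{t^-,M\}$ are $1$-Lipschitz and monotone, which uniformly collapses \ref{i1}--\ref{i4} to the single scalar inequality \eqref{scalarineq} on $[0,\infty)$, and you then prove \eqref{scalarineq} from scratch via the two bounds $\tfrac{1-\tau^\alpha}{\alpha}\le 1-\tau$ and $\tfrac{1-\tau^\alpha}{\alpha}\le 1-\tau^\beta$, the second established by the unimodality of $G$. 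This buys you a self-contained argument with no external citation and no repeated case splits; the paper's version is shorter to write if one is willing to lean on the cited lemma, but yours makes the mechanism more transparent and handles all four statements in one stroke.
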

\begin{proof}
(i) If $a=b$, then \ref{i1} holds trivially. So we assume that $a \neq b$. Without loss of generality, we can assume that $a>b$. For $a,b>0$, $a^+ =a, b^+=b$, and \ref{i1} follows using \cite[Lemma C.1]{Brasco2014_fractional_Cheeger}. If $a,b<0$ then \ref{i1} holds trivially since $a^+=0=b^+$. Next, we assume $a>0>b$. Since $a^+ = a$ and $b^+ =0$, we need to show that
\begin{equation}\label{A.1}
    (a-b)^{p-1} \geq C(\beta,p) a^{p-1}.
\end{equation}
If we divide \eqref{A.1} by $a^{p-1}$, we get $(1-\frac{b}{a})^{p-1} \geq C(\beta,p)$ and this inequality always holds true since $C(\beta,p) \leq 1$ and $\frac{b}{a}<0$. 

\noi (ii) Now we consider $M > 0$. For $a=b$, \ref{i2} holds trivially. So without loss of generality, we assume that $a>b$. If $a,b \ge M$ then $(a^+)_M = M = (b^+)_M$, and \ref{i2} holds trivially. If $a, b \le M$, then by noting that $(a^+)_M = a^+,  (b^+)_M = b^+$, \ref{i2}  follows using \ref{i1}. Now we assume that $b<M<a$. In this case $(b^+)_M = b^+ < M = (a^+)_M$. Hence using \ref{i1} we get
\begin{align*}
    |a-b|^{p-2}(a-b)\big( (a^+)_{M}^\beta - (b^+)_{M}^\beta \big) & \geq |M-b|^{p-2} (M-b) \left(M^{\beta} - (b^+)^{\beta } \right) \\
    &\ge C(\beta,p) \left|M^{\frac{\beta +p-1}{p}} - (b^+)^{\frac{\beta + p-1}{p}} \right|^p \\
    &= C(\beta,p) \left|(a^+)_M^{\frac{\beta +p-1}{p}} - (b^+)_M^{\frac{\beta + p-1}{p}} \right|^p.
\end{align*}
Thus \ref{i2} holds for every $a, b \in \R$.

\noi (iii) Without loss of generality, assume that $a>b$. If $a,b>0$ then \ref{i3} holds trivially since $a^-=0=b^-$. For $a,b<0$, $a^- =-a, b^-=-b$, where $0 \le a^-<b^-$. Applying \cite[Lemma C.1]{Brasco2014_fractional_Cheeger} we get 
\begin{align*}
    |a^- - b^-|^{p-2}(a^- - b^-)\left( (a^-)^\beta - (b^-)^\beta \right) \geq C(\beta,p) \left|(a^-)^{\frac{\beta +p-1}{p}} - (b^-)^{\frac{\beta + p-1}{p}} \right|^p.
\end{align*}
The above inequality infers \ref{i3}. Next, we consider $a>0>b$. In this case, \ref{i3} has the following form: 
\begin{align*}
    (a-b)^{p-1} \ge C( \be, p) (-b)^{p-1}. 
\end{align*}
Dividing the above inequality by $(-b)^{p-1}$, we see $\left(-\frac{a}{b} + 1 \right)^{p-1} \ge C( \be, p)$. This inequality always holds true since $-\frac{a}{b} \ge 0$ and $C(\be, p) \le 1$. \\
\noi (iv) Now we consider $M>0$. For $a=b$, \ref{i4} holds trivially. So without loss of generality, we assume that $a>b$.  For $a,b \ge 0$, \ref{i4} trivially holds. If $a,b \leq -M$, then $(a^-)_M = M = (b^-)_M$, and \ref{i4} holds. If $a,b \geq -M$, then by noticing that $(a^-)_M = a^-, (b^-)_M=b^-$, \ref{i4} follows using \ref{i3}. Now we assume that $b<-M<a$. For the case $a,b<0$, we notice that $(a^-)_M = a^-, (b^-)_M=M$ and consequently, using \ref{i3} we obtain
\begin{align*}
    |a-b|^{p-2}(a-b)\big((b^-)_{M}^\beta - (a^-)_{M}^\beta \big) &\geq |a-(-M)|^{p-2}(a-(-M))\big((M)^\beta - (a^-)^\beta \big)\\
    & \geq C(\beta,p) \bigg|(a^-)^{\frac{\beta +p-1}{p}} - M^{\frac{\beta +p-1}{p}} \bigg|^p\\
    &= C(\beta,p) \bigg|(a^-)_M^{\frac{\beta +p-1}{p}} - (b^-)_M^{\frac{\beta +p-1}{p}} \bigg|^p.
\end{align*}
Now if we consider $a>0>b$, then $(a^-)_M=0, (b^-)_M = M$. In this case,
\begin{align*}
    \text{\ref{i4}} \Longleftrightarrow (a-b)^{p-1} (b^-)_M^\beta \geq C(\beta,p) (b^-)_M^{\beta +p-1} 
    \Longleftrightarrow \bigg(\frac{a-b}{M}\bigg)^{p-1} \geq C(\beta,p).
\end{align*}
The last inequality holds since $C(\beta,p) \leq 1$ and $\frac{a}{M}- \frac{b}{M} > \frac{a}{M}+ 1>1$. Thus, \ref{i4} holds for every $a,b \in \R$.
\end{proof}
\begin{lemma} \label{appendix lem1}
   An iterative sequence is defined as
   \begin{align}\label{4.1}
       l_0 = p_s^*,  \quad l_{j+1}= \theta l_j + \frac{p^*_s(p-1)}{p}, \text{ and } \theta = \frac{p^*_s}{p \Bar{r}'},
   \end{align}
 where $\Bar{r}'=\frac{\Bar{r}}{\Bar{r}-1}$ and $\Bar{r}$ is given by \eqref{3.25}. Then there exist $\beta_1,\beta_2>0$ such that 
$\beta_1<\frac{\theta^j}{l_j}<\beta_2$ for all $j \in \N$.
\end{lemma}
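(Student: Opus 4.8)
The plan is to treat the recursion \eqref{4.1} as a first-order affine (inhomogeneous linear) recurrence in $l_j$ and solve it in closed form. Set $c := \frac{p_s^*(p-1)}{p}$, so that $c>0$ and $l_{j+1} = \gamma l_j + c$. The crucial structural fact, already recorded just after \eqref{3.25}, is that $\gamma>1$; consequently the unique fixed point $l^\star := \dfrac{c}{1-\gamma}$ of the map $t\mapsto \gamma t + c$ is strictly negative. Subtracting $l^\star = \gamma l^\star + c$ from $l_{j+1}=\gamma l_j + c$ gives $l_{j+1}-l^\star = \gamma\,(l_j-l^\star)$, and iterating yields the explicit formula
\[
  l_j \;=\; l^\star + \gamma^{j}\,(l_0-l^\star), \qquad j\ge 0 .
\]

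With this in hand the two-sided estimate is immediate. Since $l^\star<0<l_0=p_s^*$ we have $l_0-l^\star>0$, and: first, $l_j<\gamma^{j}(l_0-l^\star)$, whence $\dfrac{\gamma^{j}}{l_j}>\dfrac{1}{l_0-l^\star}=:\beta_1>0$; second, $l_j-\gamma^{j}l_0 = l^\star\,(1-\gamma^{j})\ge 0$, because $l^\star<0$ and $1-\gamma^{j}\le 0$ for every $j\ge 0$, so $l_j\ge \gamma^{j}p_s^*$ and $\dfrac{\gamma^{j}}{l_j}\le \dfrac{1}{p_s^*}$. Choosing any $\beta_2>\dfrac{1}{p_s^*}$ --- for instance $\beta_2=1$, which works since $p_s^*=\frac{Np}{N-sp}>1$ --- gives $\beta_1<\dfrac{\gamma^{j}}{l_j}<\beta_2$ for all $j$, as required; moreover $\beta_1,\beta_2$ depend only on $N,s,p$ (through $p_s^*$ and $\gamma$).

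I do not expect a genuine obstacle here: the only points requiring care are (i) invoking $\gamma>1$, since that is exactly what forces $l^\star<0$ and makes both one-sided bounds hold, and (ii) the sign bookkeeping in $l^\star(1-\gamma^{j})\ge 0$. An equivalent route that avoids writing down $l^\star$ is a direct induction proving $\gamma^{j}p_s^*\le l_j\le \gamma^{j}\bigl(p_s^*+\tfrac{c}{\gamma-1}\bigr)$, but the closed form above is shorter and pins down the constants explicitly.
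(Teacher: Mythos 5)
Your proof is correct and follows essentially the same route as the paper: both solve the affine recurrence in closed form (the paper writes $l_j=\gamma^j p_s^*+\frac{p_s^*(p-1)}{p}\sum_{i=0}^{j-1}\gamma^i$ and bounds the geometric sum using $\gamma>1$, which is exactly your fixed-point formula $l_j=l^\star+\gamma^j(l_0-l^\star)$ rewritten), and your $\beta_1=\frac{1}{l_0-l^\star}$ coincides with the paper's $\beta_1=\bigl(p_s^*+\frac{p_s^*(p-1)}{p(\gamma-1)}\bigr)^{-1}$. The only cosmetic difference is that the paper takes $\beta_2=\frac{1}{p_s^*}$ directly (the inequality $l_j/\gamma^j>p_s^*$ being strict for $j\ge 1$), whereas you enlarge $\beta_2$ slightly; both are fine.
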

\begin{proof}
    From \eqref{4.1}, we can write the iterative sequence as
    \begin{align} \label{4.2}
        l_j = \theta^j l_0 + \frac{p_s^*(p-1)}{p} \sum\limits_{i=0}^{j-1} \theta^{i} = \theta^j p_s^* + \frac{p_s^*(p-1)}{p} \sum\limits_{i=0}^{j-1} \theta^{i}.
    \end{align}
Dividing \eqref{4.2} by $\theta^j$, we obtain
\begin{align*}
      \frac{l_j}{\theta^j} &= p_s^* + \frac{p_s^*(p-1)}{p} \left(\frac{1}{\theta^j} + \frac{1}{\theta^{j-1}}+ \cdots +\frac{1}{\theta} \right) = p_s^* + \frac{p_s^*(p-1)}{p} \sum\limits_{i=1}^{j} \frac{1}{\theta^i}< p_s^* + \frac{p_s^*(p-1)}{p} \sum\limits_{i=1}^{\infty} \frac{1}{\theta^i}.
 \end{align*}
Consequently, 
\begin{align}\label{4.3}
\frac{l_j}{\theta^j} < p_s^* + \frac{p_s^*(p-1)}{p} \frac{1}{\theta-1}.
\end{align}
Furthermore, it is evident from \eqref{4.2} that
\begin{align} \label{4.4}
\frac{l_j}{\theta^j} > p_s^*.
\end{align}
The combination of \eqref{4.3} and \eqref{4.4} yields
$\beta_1<\frac{\theta^j}{l_j} < \beta_2,$ for all $j \in \N$,
where $\beta_1= \frac{1}{p_s^* + \frac{p_s^*(p-1)}{p(\theta-1)}}$ and $\beta_2 = \frac{1}{p_s^*}$.   
\end{proof}
%%%%%%%%%%%%%%%%%%%%%%%%%%%%%%%%%%%%%%%%%%%%%%
\bibliographystyle{abbrv}

\end{document}